\newif\ifdraft
\definecolor{labelkey}{gray}{0.5}
\newlength{\myarrowsize} 
\newenvironment{diagram*}[2]{%
\[%
\begin{tikzpicture}[>=cmto,baseline=(current bounding box.center),%
	to/.style={->,font=\scriptsize,cap=round},%
	into/.style={cmhook->,font=\scriptsize,cap=round},%
	onto/.style={-cmonto,font=\scriptsize,cap=round},%
	math/.style={matrix of math nodes, row sep=#2, column sep=#1,%
		text height=1.5ex, text depth=0.25ex}]%
}{%
\end{tikzpicture}%
\]%
\ignorespacesafterend%
}
\newcommand{\Dmod}{\mathscr{D}}
\newcommand{\Mmod}{\mathcal{M}}
\newcommand{\ZZ}{\mathbb{Z}}
\newcommand{\QQ}{\mathbb{Q}}
\newcommand{\CC}{\mathbb{C}}
\newcommand{\shf}[1]{\mathscr{#1}}
\def\overbar#1#2#3{{%
	\setbox0=\hbox{\displaystyle{#1}}%
	\dimen0=\wd0
	\advance\dimen0 by -#2 
	\vbox {\nointerlineskip \moveright #3 \vbox{\hrule height 0.3pt width \dimen0}%
		\nointerlineskip \vskip 1.5pt \box0}%
}}
\newcommand{\shO}{\shf{O}}
\let\@@seccntformat\@seccntformat
\renewcommand*{\@seccntformat}[1]{%
  \expandafter\ifx\csname @seccntformat@#1\endcsname\relax
    \expandafter\@@seccntformat
  \else
    \expandafter
      \csname @seccntformat@#1\expandafter\endcsname
  \fi
    {#1}%
}
\newcommand*{\@seccntformat@subsection}[1]{%
  \textbf{\csname the#1\endcsname.}
}
\let\@paragraph\paragraph
\renewcommand*{\paragraph}[1]{%
	\vspace{0.3\baselineskip}%
	\@paragraph{\textit{#1}}%
}
\newtheorem{theorem}{Theorem} [subsection]
\newtheorem*{theorem*}{Theorem}
\newtheorem{lemma}[theorem]{Lemma}
\newtheorem*{lemma*}{Lemma}
\newtheorem{corollary}[theorem]{Corollary}
\newtheorem{proposition}[theorem]{Proposition}
\newtheorem*{proposition*}{Proposition}
\theoremstyle{definition}
\newtheorem{definition}[theorem]{Definition}
\newtheorem*{definition*}{Definition}
\newtheorem{remark}[theorem]{Remark}
\newtheorem{question}[theorem]{Question}
\newtheorem{example}[theorem]{Example}
\newtheorem*{example*}{Example}
\newtheorem*{problem*}{Problem}
\theoremstyle{plain}
\newcommand{\theoremref}[1]{\hyperref[#1]{Theorem~\ref*{#1}}}
\newcommand{\lemmaref}[1]{\hyperref[#1]{Lemma~\ref*{#1}}}
\newcommand{\definitionref}[1]{\hyperref[#1]{Definition~\ref*{#1}}}
\newcommand{\propositionref}[1]{\hyperref[#1]{Proposition~\ref*{#1}}}
\newcommand{\conjectureref}[1]{\hyperref[#1]{Conjecture~\ref*{#1}}}
\newcommand{\corollaryref}[1]{\hyperref[#1]{Corollary~\ref*{#1}}}
\newcommand{\exampleref}[1]{\hyperref[#1]{Example~\ref*{#1}}}
\let\old@caption\caption
\renewcommand*{\caption}[1]{%
	\setcounter{figure}{\value{equation}}%
	\stepcounter{equation}%
	\old@caption{#1}\relax%
}
\newcounter{intro}
\newtheorem{intro-conjecture}[intro]{Conjecture}
\newtheorem{intro-corollary}[intro]{Corollary}
\newtheorem{intro-theorem}[intro]{Theorem}
\newcommand{\parref}[1]{\hyperref[#1]{\S\ref*{#1}}}
\newcommand*\if@single[3]{%
  \setbox0\hbox{${\mathaccent"0362{#1}}^H$}%
  \setbox2\hbox{${\mathaccent"0362{\kern0pt#1}}^H$}%
  \ifdim\ht0=\ht2 #3\else #2\fi
  }
\newcommand*\rel@kern[1]{\kern#1\dimexpr\macc@kerna}
\newcommand*\widebar[1]{\@ifnextchar^{{\wide@bar{#1}{0}}}{\wide@bar{#1}{1}}}
\newcommand*\wide@bar[2]{\if@single{#1}{\wide@bar@{#1}{#2}{1}}{\wide@bar@{#1}{#2}{2}}}
\newcommand*\wide@bar@[3]{%
  \begingroup
  \def\mathaccent##1##2{%
    \if#32 \let\macc@nucleus\first@char \fi
    \setbox\z@\hbox{$\macc@style{\macc@nucleus}_{}$}%
    \setbox\tw@\hbox{$\macc@style{\macc@nucleus}{}_{}$}%
    \dimen@\wd\tw@
    \advance\dimen@-\wd\z@
    \divide\dimen@ 3
    \@tempdima\wd\tw@
    \advance\@tempdima-\scriptspace
    \divide\@tempdima 10
    \advance\dimen@-\@tempdima
    \ifdim\dimen@>\z@ \dimen@0pt\fi
    \rel@kern{0.6}\kern-\dimen@
    \if#31
      \overline{\rel@kern{-0.6}\kern\dimen@\macc@nucleus\rel@kern{0.4}\kern\dimen@}%
      \advance\dimen@0.4\dimexpr\macc@kerna
      \let\final@kern#2%
      \ifdim\dimen@<\z@ \let\final@kern1\fi
      \if\final@kern1 \kern-\dimen@\fi
    \else
      \overline{\rel@kern{-0.6}\kern\dimen@#1}%
    \fi
  }%
  \macc@depth\@ne
  \let\math@bgroup\@empty \let\math@egroup\macc@set@skewchar
  \mathsurround\z@ \frozen@everymath{\mathgroup\macc@group\relax}%
  \macc@set@skewchar\relax
  \let\mathaccentV\macc@nested@a
  \if#31
    \macc@nested@a\relax111{#1}%
  \else
    \def\gobble@till@marker##1\endmarker{}%
    \futurelet\first@char\gobble@till@marker#1\endmarker
    \ifcat\noexpand\first@char A\else
      \def\first@char{}%
    \fi
    \macc@nested@a\relax111{\first@char}%
  \fi
  \endgroup
}
\newcommand{\I}{\mathcal{I}}
\def\ZZ{{\mathbf Z}}
\def\CC{{\mathbf C}}
\def\QQ{{\mathbf Q}}
\newtheorem*{thmA'}{Theorem~A$^\prime$}
\begin{document}

\vspace{\baselineskip}

\title{Hodge ideals for $\QQ$-divisors, $V$-filtration, and minimal exponent}

\author[M. Musta\c{t}\u{a}]{Mircea~Musta\c{t}\u{a}}
\address{Department of Mathematics, University of Michigan,
Ann Arbor, MI 48109, USA}
\email{{\tt mmustata@umich.edu}}

\author[M.~Popa]{Mihnea~Popa}
\address{Department of Mathematics, Northwestern University, 
2033 Sheridan Road, Evanston, IL
60208, USA} \email{{\tt mpopa@math.northwestern.edu}}

\thanks{MM was partially supported by NSF grant DMS-1701622 and a Simons Fellowship; MP was partially supported by NSF grant DMS-1700819.}

\subjclass[2010]{14F10, 14J17, 32S25, 14D07}

\begin{abstract}
We compute the Hodge ideals of $\QQ$-divisors in terms of the $V$-filtration induced  by a local defining equation, inspired by a result of Saito in the reduced case. We deduce basic properties of Hodge ideals in this generality, and relate them to Bernstein-Sato polynomials. As a consequence of our study we establish general properties of the minimal exponent, a refined version of the log canonical threshold, 
and bound it in terms of discrepancies on log resolutions, addressing a question of Lichtin and Koll\'ar. 
\end{abstract}

\maketitle

\makeatletter

\section{Introduction}

This paper establishes a connection between the Hodge ideals of a $\QQ$-divisor, as defined in \cite{MP3}, and the $V$-filtration
along an appropriately chosen hypersurface. It is inspired by Saito's \cite{Saito-MLCT}, which explained such a connection expressed in terms of the microlocal $V$-filtration, in the case of the Hodge ideals of reduced divisors studied in \cite{MP1}.
In the $\QQ$-divisor case,  this relationship turns out to be crucial towards establishing some of the most basic properties 
of Hodge ideals, as well as of certain roots of Bernstein-Sato polynomials.

Let $X$ be a smooth complex variety, and $D$ an effective $\QQ$-divisor on $X$. Such a divisor can be written locally as $D = \alpha H$, where $\alpha \in \QQ$, and $H = {\rm div} (f)$ is the divisor of a regular function, and it is this set-up that we focus on in what follows. To this data, by a standard construction one associates the left $\Dmod_X$-module
$$\Mmod (f^\beta) : = \shO_X(*H)f^{\beta},$$ 
a free $\shO_X(*H)$-module of rank $1$ with generator the symbol $f^{\beta}$, where $\beta = 1 - \alpha$ (the $\Dmod$-module action is recalled in 
\S\ref{set-up}). In \cite{MP3} we observe that it carries a natural filtration $F_p \Mmod (f^\beta)$, with $p \ge 0$, which makes 
it a filtered direct summand in a $\Dmod$-module underlying a mixed Hodge module. Moreover, we show that this can be written in the form
$$F_p \Mmod (f^\beta) = I_p (D) \otimes \shO_X (pZ + H) f^\beta,$$
with $Z = H_{\rm red}$ the support of $H$, where $I_p(D)$ are coherent sheaves of ideals on $X$ called the Hodge ideals of $D$. Note that here and throughout the paper we make a slight abuse of notation, identifying the right-hand side with its image via the canonical injection into  $\shO_X(*H)f^{\beta}$.

The  ideal $I_0 (D)$ is identified in \emph{loc. cit.} with the multiplier ideal $\I \big((1-\epsilon)D\big)$ associated to the $\QQ$-divisor $(1- \epsilon)D$ with $0 < \epsilon \ll 1$, which measures the failure of the pair $(X, D)$ to be log canonical.
On the other hand, when $D$ is integral Budur and Saito \cite{Budur-Saito} have shown the identification 
$$\I \big((1-\epsilon)D\big) = V^1 \shO_X,$$
where $V^\bullet \shO_X$ denotes the $V$-filtration induced on $\shO_X$. This is defined as
$V^\bullet \shO_X = V^\bullet \iota_+ \shO_X \cap \shO_X$, where $V^\bullet \iota_+ \shO_X$ is the  
Kashiwara-Malgrange $V$-filtration on the graph embedding (via a local equation of $H$) of $\shO_X$; 
see \S\ref{scn:Vfil}. Consequently we have $I_0 (D) = V^1 \shO_X$.

When $D$ is a reduced divisor (corresponding in the notation above to $\beta = 0$ and  $D = H = Z$), Saito showed in \cite{Saito-MLCT} that a relationship of this type continues to hold in a weaker sense even for $p\ge 1$, namely 
$$I_p (D) =  \widetilde{V}^{p +1} \shO_X  \,\,\,\,\,\,{\rm mod} ~f,$$
or in other words $I_p (D)  + (f) =  \widetilde{V}^{p +1} \shO_X  + (f)$, where this time 
$\widetilde{V}^{\bullet} \shO_X$
denotes the \emph{microlocal $V$-filtration} induced on $\shO_X$ by $f$, defined in \cite{Saito_microlocal}. Examples show that in general this identification does not hold without modding out by $f$; even so however, it is significant for a number of reasons. Most importantly, it establishes a connection between Hodge ideals and the Bernstein-Sato polynomial of $f$. Moreover, to establish the triviality of the ideals on the two sides, it suffices to check it mod $f$. 

In this paper we show that similar statements hold for arbitrary $\QQ$-divisors. More precisely, we fully compute the Hodge ideals in terms of 
the (usual) $V$-filtration on $\iota_+\shO_X$. This strengthens Saito's result above even in the reduced case. In order to state our results, let us recall first that without loss of generality it suffices to focus on the case $\lceil D \rceil = Z$. Indeed, 
the $\QQ$-divisor $B = D + Z - \lceil D \rceil $ satisfies this condition, while
according to \cite[Lemma 4.4]{MP3}, we have
$$I_p (D) = I_p (B)\otimes \shO_X (Z - \lceil D \rceil ).\footnote{In other words, up to multiplication by locally principal ideals, the Hodge ideals depend only the coefficients of $D$ modulo integers, just as in the well-known case of multiplier ideals (which corresponds to $p =0$).}$$
It will also be convenient to express things equivalently in terms of a slightly different ideal, defined by the formula
$$F_p \Mmod (f^\beta) = I''_p (D) \otimes \shO_X \big((p+1)H\big) f^\beta,\footnote{The notation $I'_p (D)$ is reserved for a 
different object in \cite{MP3}.}$$
or in other words satisfying 
$$I''_p (D) = I_p (D)\otimes \shO_X (pZ - pH).$$
The $V$-filtration will come into play via the following construction: for each $p \ge 0$, we consider the coherent sheaf of ideals in $\shO_X$ given by
$$\widetilde{I}_p(D) = \left\{ v\in\shO_X~|~ \exists~ v_0,\ldots,v_{p-1},v_p=v \in \shO_X {\rm ~with~} 
\sum_{i=0}^pv_i\partial_t^i\delta\in V^{\alpha}\iota_+\shO_X \right\}.$$
For $0 < \alpha \le 1$, this is just another way of expressing Saito's microlocal $V$-filtration mentioned above; specifically, one has 
$$\widetilde{I}_p (D) = \widetilde{V}^{p +\alpha} \shO_X.$$ 
It will also be convenient to make use of the polynomials
$$Q_i (X) : = X (X+1) \cdots (X + i - 1)\in \ZZ[X].$$

With these definitions and reductions, our main result can be phrased as follows:

\begin{intro-theorem}\label{general_description}
In the set-up above, for every positive rational number $\alpha$ such that $D= \alpha H$ satisfies $\lceil D\rceil=Z$, and for every $p\ge 0$, we have
$$I''_p (D)=\left\{\sum_{j=0}^pQ_j(\alpha)f^{p-j}v_j ~| ~\sum_{j=0}^pv_j\partial_t^j\delta\in V^{\alpha}\iota_+\shO_X\right\}.$$
In particular, we have
$$I''_p (D)+(f)=\widetilde{I}_p(D)+(f).$$
\end{intro-theorem}

Note that in the case where $D = \alpha Z$, with $Z$ a reduced effective divisor, we have 
$I''_p (D)=I_p(D)$ for all $p\geq 0$. In this case we have the following variant of the theorem above, 
where we place no restrictions on the positive rational number $\alpha$.

\begin{thmA'}\label{general_description_variant}
If $Z$ is a reduced, effective divisor on $X$, defined by the global equation $f\in\shO_X(X)$, then
for every positive rational number $\alpha$, and every $p\geq 0$, if $D = \alpha Z$ we have
\begin{equation}\label{eq_general_description_variant}
I_p(D)=\left\{\sum_{j=0}^pQ_j(\alpha)f^{p-j}v_j ~|~ \sum_{j=0}^pv_j\partial_t^j\delta\in V^{\alpha}\iota_+\shO_X\right\}.
\end{equation}
In particular, we have
$$I_p(D)+(f)=\widetilde{I}_p(D)+(f).$$
\end{thmA'}

The proofs of these theorems, as well as various intermediate results, occupy \S\ref{one_way} and \S\ref{other_way}.
Some of the arguments follow \cite{Saito-MLCT}, and rely on the regular and quasi-unipotent property of 
filtered $\Dmod$-modules underlying mixed Hodge modules. For the full calculation of Hodge ideals in terms of the 
$V$-filtration however,  further techniques need to be developed as well. One technical point,  of independent interest, is a calculation  of the $V$-filtration on the (graph embedding of the) twisted $\Dmod$-modules $\Mmod (f^\beta)$  in terms of the more tractable $V$-filtration on $\iota_+\shO_X$; for the statement see Proposition \ref{correspondence}.  

\noindent
{\bf Remark.} The microlocal $V$-filtration has been computed explicitly in various cases by Saito; for instance, it is computed for a large class of quasi-homogeneous isolated singularities in \cite[Proposition (2.2.4)] {Saito-MLCT}.  In \cite{Saito-HF} the Hodge filtration itself is computed combinatorially for all such singularities, and this is extended to the case of $\QQ$-divisors in \cite{Zhang}.
This leads to an explicit calculation of Hodge ideals for isolated quasi-homogeneous singularities; see \emph{loc. cit.} for examples.

In the $\QQ$-divisor case, the results above allow us to deduce some basic properties of Hodge ideals that do not follow from the methods of \cite{MP3}. We collect some of these, treated individually and discussed in detail in  \S\ref{basic}, in the following:

\begin{intro-corollary}\label{intro_properties}
Let $D = \alpha Z$, where $Z$ is a reduced divisor and $\alpha \in \QQ_{> 0}$. Then the following hold:
\begin{enumerate}
\item $I_p (D) + \shO_X(-Z) \subseteq I_{p-1} (D) + \shO_X(-Z)$ for all $p$.
\item If $(X, D)$ is $(p-1)$-log canonical,\footnote{This means that $I_0 (D) = \cdots = I_{p-1} (D) = \shO_X$. In particular it requires $\alpha \le 1$.} 
then $I_{p+1} (D) \subseteq I_p (D)= \widetilde{I}_p (D)$.
\item Fixing $p$, there exists a finite set of rational numbers $0 = c_0 < c_1 < \cdots < c_s < c_{s+1} = 1$ such that 
for each $0 \le i \le s$ and each $\alpha \in (c_i, c_{i+1}]$ we have 
$$I_p (\alpha Z)\cdot\shO_Z = I_p (c_{i+1} Z)\cdot\shO_Z = {\rm constant}$$
and such that 
$$I_p (c_{i+1} Z)\cdot\shO_Z\subsetneq I_p (c_i Z)\cdot\shO_Z.$$
\end{enumerate}
\end{intro-corollary}

The last statement gives a picture analogous to that of jumping coefficients of multiplier ideals \cite[Lemma ~9.3.21]{Lazarsfeld}. If $f$ is a local equation of $Z$, the set of $c_i$ is a subset of the set of 
jumping numbers for the $V$-filtration on $\iota_+\shO_X$  associated to $f$. An example in \S\ref{basic} shows that the statement fails if we work directly with $I_p (\alpha Z)$ 
as opposed to $I_p (\alpha Z)\cdot\shO_Z$.

There are interesting applications,  obtained in \S\ref{refined_lct} by combining the above results  with the birational study of Hodge ideals in \cite{MP3}, concerning the Bernstein-Sato polynomial $b_Z(s)$ of $Z$. Assuming $Z\neq 0$, the polynomial $(s+1)$ divides $b_Z(s)$. Following \cite{Saito-MLCT}, we denote by 
$\widetilde\alpha_Z$ the negative of the largest root of $b_Z(s)/(s+1)$ (with the convention that this is $\infty$ if $b_Z(s)=s+1$). 
This invariant is called the \emph{minimal exponent} of $Z$, and is a refined version of the log canonical threshold of the pair $(X, Z)$, which is equal to ${\rm min}\{\widetilde\alpha_Z, 1\}$; see \S\ref{refined_lct} for a discussion. First, since by Theorem \ref{general_description_variant}$^\prime$ 
we have that $I_p (D)$ is trivial if and only if $\widetilde{I}_p (D)$ is so, results of Saito on the microlocal $V$-filtration will allow us to conclude:

\begin{intro-corollary}\label{mlc_intro}
If $Z\neq 0$ is a reduced effective divisor on the smooth variety $X$ and $\alpha\in (0,1]$ is a rational number, then
$$I_p (\alpha Z) = \shO_X \iff p\leq\widetilde{\alpha}_Z - \alpha.$$
\end{intro-corollary}

Now given a  log resolution $\mu \colon Y\to X$ of the pair $(X,Z)$, assumed to be an isomorphism over $X\smallsetminus Z$, if $F_1,\ldots,F_m$ are the irreducible components of its exceptional locus and $\widetilde{Z}$ is the strict transform of $Z$, assumed to be smooth, we write
$$\mu^* Z =\widetilde{Z}+\sum_{i=1}^ma_iF_i\quad\text{and}\quad K_{Y/X}=\sum_{i=1}^mb_iF_i,$$
where $K_{Y/X}$ is the relative canonical divisor. Denoting
$$\gamma : = \underset{i=1, \ldots, m}{\rm min} \left\{\frac{b_i +1}{a_i}\right\},$$
it is well known that the log canonical threshold of $(X, Z)$ is also equal to $\min\{\gamma,1\}$. Such a precise interpretation in terms of log resolutions is however not known for other roots of the Bernstein-Sato polynomial, and 
Lichtin \cite[Remark~2,~p.303]{Lichtin} posed the natural question whether $\widetilde\alpha_Z = \gamma$. As noted by Koll\'ar \cite[Remark~10.8]{Kollar}, in general the answer is negative, since $\gamma$ in fact depends on the choice of log resolution. Nevertheless:

\begin{intro-corollary}\label{lichtin_inequality}
With the notation above, we always have $\widetilde\alpha_Z \ge \gamma$. 
\end{intro-corollary}

The reason is that the triviality of $I_p (D)$ is related on one hand to $\widetilde\alpha_Z$ by Corollary \ref{mlc_intro}, and on the other hand to $\gamma$ by \cite[Proposition 11.2]{MP3}. 
It is worth noting that, although the statement is about the reduced divisor $Z$, the proof uses crucially the theory of Hodge ideals for $\QQ$-divisors of the form $D = \alpha Z$.

Using further properties of Hodge ideals of ${\mathbf Q}$-divisors proved in \cite{MP3}, we deduce some general properties of the minimal exponent $\widetilde{\alpha}_D$ for any effective divisor $D$, extending important features of the log canonical threshold. In order to formulate the result, it is convenient to use a local version of this refined log canonical threshold, denoted
$\widetilde{\alpha}_{D,x}$, for $x\in D$ (see \S\ref{refined_lct} for the precise definition).

\begin{intro-theorem}\label{properties_refined_lct}
Let $X$ be a smooth $n$-dimensional  complex variety, and $D$ an effective divisor on $X$.
\begin{enumerate}
\item If $Y$ is a smooth subvariety of $X$ such that $Y\not\subseteq D$, then for every
$x\in D\cap Y$, we have
$$\widetilde{\alpha}_{D\vert_Y,x}\leq \widetilde{\alpha}_{D,x}.$$
\item Consider a smooth morphism $\pi\colon X\to T$, together with a section $s\colon T\to X$ such that 
$s(T)\subseteq D$.  If $D$ does not contain any fiber of $\pi$, so that
for every $t\in T$ the divisor $D_t=D\vert_{\pi^{-1}(t)}$ is defined, then the function
$$T\ni t\to \widetilde{\alpha}_{D_t,s(t)}$$
is lower semicontinuous.
\item For every $x\in X$, if $m={\rm mult}_x(D)\geq 2$, then
$$ \frac{n-r-1}{m} \le \widetilde{\alpha}_{D,x}\le \frac{n}{m},$$
where $r$ is the dimension of the singular locus of the projectivized tangent cone
${\mathbf P}(C_xD)$ of $D$ at $x$ (with the convention that $r=-1$ if ${\mathbf P}(C_xD)$ is smooth).
\end{enumerate}
\end{intro-theorem}

When $D$ has an isolated singularity at $x$ and $Y$ is a general hyperplane section through $x$, the inequality in (1) was proved in \cite[Th\'eor\`eme~1]{Loeser} (in fact, in this case the inequality is strict).
The semicontinuity property in (2) was proved when every $D_t$ has an isolated singularity at $s(t)$ in \cite[Theorem~2.11]{Steenbrink}, where it was deduced from more general semicontinuity properties of the spectrum. We stress that in (2) we do not assume that the restriction of the support of $D$ 
to the fibers of $\pi$ is reduced, as in the semicontinuity theorem \cite[Theorem 14.1]{MP3} (which we do use).

Yet more properties analogous to those of log canonical thresholds follow by combining Theorem \ref{properties_refined_lct} with a Thom-Sebastiani-type theorem due to Saito; see Proposition \ref{further_properties} for the concrete statement. We ask in Question \ref{q_ACC} whether the analogue of the ACC property for log canonical thresholds holds for minimal exponents as well.

Finally, going back to the general relationship between Hodge ideals and the Bernstein-Sato polynomial, in Proposition \ref{roots_b}  we give an extension of the fact that the negatives of the jumping coefficients of multiplier ideals in the interval $(0,1]$ are roots of the Bernstein-Sato polynomial, see \cite[Theorem~B]{ELSV}. Namely, under a suitable log-canonicity hypothesis, the jumping coefficients of higher Hodge ideals in the same interval, in the sense of Corollary \ref{intro_properties} (3), lead to further such roots. This follows quickly from results proved in the final two sections of the paper.

\noindent
{\bf Acknowledgements.}
 We are grateful to Morihiko Saito for comments and suggestions that helped improve a previous version of this paper. We would also like to thank Nero Budur and Mingyi Zhang for a few useful discussions, and a referee for several corrections.

\section{Main results}

\subsection{The set-up.}\label{set-up}
Let $X$ be a smooth complex algebraic variety and $H$ an effective divisor on $X$.
We assume that $H$ is defined by a global regular function $f\in\shO_X(X)$. We denote by $\shO_X(*H)$ the sheaf of
rational functions on $X$ with poles along $H$, that is, 
$$\shO_X(*H)=\bigcup_{m\geq 0}\shO_X(mH).$$
Given a rational number $\gamma$, we consider 
the $\Dmod_X$-module 
$$\Mmod(f^{\gamma}):=\shO_X(*H)f^{\gamma}.$$ 
This is a free $\shO_X(*H)$-module of rank $1$,
with generator the symbol $f^{\gamma}$, on which a derivation $D$ of $\shO_X$ acts by
$$D (w f^{\gamma})=\big(D(w) + w\frac{\gamma\cdot D(f)}{f}\big)f^{\gamma}.$$
We will keep the notation $\shO_X(*H)$ for $\Mmod(f^0)$.
The $\Dmod_X$-modules $\Mmod(f^{\gamma})$ are regular holonomic, with quasi-unipotent monodromy.
In fact, they are filtered direct summands of $\Dmod$-modules underlying mixed Hodge modules, see \cite[\S2]{MP3}.

Note that if $\gamma_1-\gamma_2=d$ is an integer, then we have a canonical isomorphism
of $\Dmod_X$-modules
$$\Mmod(f^{\gamma_1})\overset{\simeq}{\longrightarrow} \Mmod(f^{\gamma_2}),\quad gf^{\gamma_1}\mapsto (gf^d)f^{\gamma_2}.$$
We will be particularly interested in the $\Dmod$-modules $\Mmod(f^{\beta})$ as in the Introduction, which via the isomorphism above can also be identified with $\Mmod(f^{-\alpha})$ with $\alpha = 1 - \beta$.

\subsection{The $V$-filtrations corresponding to $\shO_X(*H)$ and $\Mmod(f^{\beta})$}\label{scn:Vfil}
We begin by reviewing some basic facts about $V$-filtrations.
Let 
$$\iota\colon X\hookrightarrow X\times\CC, \,\,\,\,\, x\mapsto \big(x,f(x)\big)$$ 
be the closed embedding given by the graph of $f$.
For a $\Dmod_X$-module $\Mmod$, we consider
the $\Dmod$-module theoretic direct image 
$$\iota_+\Mmod : = \Mmod \otimes_\CC \CC[\partial_t],$$
see for instance \cite[Example~1.3.5]{HTT}. 
This is a $\Dmod_{X\times\CC}$-module that can be described as follows. First, if $\Mmod=\shO_X$, then 
$$\iota_+\shO_X \simeq \shO_X[t]_{f-t}/\shO_X[t],$$ 
with the obvious $\Dmod_X$-module structure. If $\delta$ denotes the class of $\frac{1}{f-t}$ in $\iota_+\shO_X$, it is straightforward to see that every element in $\iota_+\shO_X$ can be written uniquely as
$$\sum_{j\geq 0}h_j \partial_t^j\delta,$$
with $h_j\in\shO_X$, only finitely many of these being nonzero.
Note that by definition we have $t\delta=f\delta$.

Given an arbitrary $\Dmod_X$-module $\Mmod$, we have
$$\iota_+\Mmod\simeq \Mmod\otimes_{\shO_X}\iota_+\shO_X=\bigoplus_{j\geq 0}\Mmod\otimes_{\shO_X}\shO_X\partial_t^j\delta,$$
which in particular shows the connection with the original definition above.
With this description, multiplication by $t$ is given by
\begin{equation}\label{formula_mult_t}
t(m\otimes\partial_t^j\delta)=fm\otimes\partial_t^j\delta-jm\otimes\partial_t^{j-1}\delta
\end{equation}
and the action of a derivation $D\in {\rm Der}_{\CC}(\shO_X)$ is given by
$$D(m\otimes\partial_t^j\delta)=D(m)\otimes \partial_t^j\delta-D(f)m\otimes\partial_t^{j+1}\delta.$$
In particular, every element in $\iota_+\Mmod(f^{\beta})$ can be written uniquely as a finite sum
$$\sum_{j\geq 0}h_jf^{\beta}\otimes\partial_t^j\delta,\quad\text{with}\quad h_j\in\shO_X(*H).$$ 
For a derivation $D$ as above, we have
$$D(hf^{\beta}\otimes\partial_t^j\delta)=D(h)f^{\beta}\otimes\partial_t^j \delta +\beta \frac{D(f)h}{f} f^{\beta}\otimes\partial_t^j\delta-D(f)hf^{\beta}\otimes\partial_t^{j+1}\delta.$$

For a $\Dmod_X$-module $\Mmod$, we call a \emph{$V$-filtration} on $\iota_+\Mmod$ 
a rational filtration $(V^{\gamma}=V^{\gamma}\iota_+\Mmod)_{\gamma\in\QQ}$
that is exhaustive, decreasing, discrete, and left continuous,\footnote{More precisely, the filtration has the property
that there is a positive integer $\ell$ such that $V^{\gamma}$ takes constant value in each interval $\left(\frac{i}{\ell},\frac{i+1}{\ell}\right]$, for all $i\in \ZZ$.}
such that the following conditions are satisfied:
\begin{enumerate}
\item[i)] Each $V^{\gamma}$ is a coherent module over $\Dmod_X[t,\partial_tt]$.
\item[ii)] For every $\gamma\in\QQ$, we have an inclusion
$$t\cdot V^{\gamma}\subseteq V^{\gamma+1},$$
with equality if $\gamma>0$.
\item[iii)] For every $\gamma\in\QQ$, we have
$$\partial_t\cdot V^{\gamma}\subseteq V^{\gamma-1}.$$
\item[iv)] For every $\gamma\in\QQ$, if we put $V^{>\gamma}=\bigcup_{\gamma'>\gamma}V^{\gamma'}$, then
$\partial_tt-\gamma$ acts nilpotently on 
$${\rm Gr}_V^{\gamma}:=V^{\gamma}/V^{>\gamma}.$$
\end{enumerate}

It is easy to see that there exists at most one $V$-filtration (see for example \cite[Lemme~3.1.2]{Saito-MHP}).
The existence of the $V$-filtration for $\Mmod=\shO_X$ and $\Mmod=\shO_X(*H)$ was proved by Malgrange 
\cite{Malgrange}; the case of an arbitrary holonomic $\Mmod$ is due to Kashiwara  \cite{Kashiwara3}.
We note that this original $V$-filtration was indexed by integers; the indexing by $\QQ$ (in the case of a regular holonomic $\Mmod$, with quasi-unipotent monodromy) was introduced by Saito \cite{Saito-GM}.

\begin{remark}\label{rmk_filtration_O_X}
Every element in the cokernel of the inclusion $\shO_X\hookrightarrow\shO_X(*H)$ is annihilated by some power of $f$. This implies that the canonical inclusion
$\iota_+\shO_X\hookrightarrow\iota_+\shO_X(*H)$ induces equalities
$$V^{\gamma}\iota_+\shO_X=V^{\gamma}\iota_+\shO_X(*H)\quad\text{for all}\quad\gamma>0$$
(see \cite[Lemme~3.1.7]{Saito-MHP}).
\end{remark}

In what follows, it will be convenient to also have a different description of $\iota_+\Mmod$ under 
a minor extra assumption on $\Mmod$. From now on we assume that multiplication by $f$ is bijective on $\Mmod$
(in other words, $\Mmod$ has a natural structure of $\shO_X(*H)$-module). Note that this applies, in particular, if
$\Mmod=\Mmod(f^{\beta})$.

\begin{remark}\label{bij_f_mult}
Our hypothesis on $\Mmod$ implies that multiplication by $t$ is bijective on $\iota_+\Mmod$. 
Indeed, if we consider on $\iota_+\Mmod$ the filtration given by 
$$G_p=G_p\iota_+\Mmod:=\bigoplus_{j=0}^p\Mmod\otimes_{\shO_X}\shO_X\partial_t^j\delta,$$
then multiplication by $t$ preserves the filtration; moreover, 
it follows from (\ref{formula_mult_t}) that
for every $p\geq 0$, via the obvious isomorphism
$G_p/G_{p-1}\simeq\Mmod$, multiplication by $t$ gets identified with multiplication by $f$. We thus obtain by induction on $p$ 
the fact that multiplication
by $t$ on $G_p$ is an isomorphism.
\end{remark}

\begin{remark}\label{V_filt_bij_f_mult}
If we assume that $\Mmod$ is as above and there is a $V$-filtration on $\iota_+\Mmod$, then
$$t\cdot V^{\alpha}=V^{\alpha+1}\quad\text{for all}\quad \alpha\in\QQ,$$
where for simplicity we denote $V^{\alpha}= V^{\alpha} \iota_+\Mmod$.
Indeed, the inclusion ``$\subseteq$", as well as the reverse inclusion for $\alpha>0$, follow from
general properties of the $V$-filtration. Moreover, the induced map
$${\rm Gr}(t)\colon {\rm Gr}^{\delta}_V \to  {\rm Gr}^{\delta+1}_V$$
is an isomorphism if $\delta\neq 0$. 

Suppose now that $\alpha\leq 0$ and $u=tw\in V^{\alpha+1}$. Let $\delta\ll 0$ be such that $w\in V^{\delta}$.
If $\delta\geq\alpha$, then we are done. On the other hand, if $\delta<\alpha$, then $\delta\neq 0$ since $\alpha\leq 0$; since $tw\in V^{>\delta+1}$, we conclude that 
$w\in V^{>\delta}$. 
After repeating this argument finitely many times, we obtain $w\in V^{\alpha}$. 
\end{remark}

Let $\Dmod\langle t,s\rangle$ be the subsheaf of $\Dmod_{X\times\CC}$ generated by $\Dmod_X$, $t$, and $s=-\partial_tt$. 
Note that $t$ and $s$ satisfy $st=t(s-1)$ and more generally
\begin{equation}\label{commutator_s_and_t}
P(s)t=t P(s-1)\quad\text{for all}\quad P\in\CC[s].
\end{equation}
We also consider the localization $\Dmod_X\langle t,t^{-1},s\rangle=\Dmod_X\langle t,t^{-1},\partial_t\rangle$ of $\Dmod\langle t,s\rangle$.
(This is the push-forward of the sheaf of differential operators from $X\times\CC^*$ to $X\times\CC$.)
Note that in this ring we have $\partial_t=-st^{-1}$ and from (\ref{commutator_s_and_t}) we obtain
\begin{equation}\label{commutator_s_and_t_v2}
t^{-1}P(s)=P(s-1)t^{-1}\quad\text{for all}\quad P\in\CC[s].
\end{equation}
A $\Dmod_X \langle t,t^{-1},s\rangle$-module is simply a $\Dmod_{X\times\CC}$-module on which $t$ acts bijectively.

We consider the $\Dmod_X\langle t,t^{-1},s\rangle$-module $\Mmod[s]f^s$ defined as follows.
As an $\shO_X$-module, we have an isomorphism
$$\Mmod\otimes_{\shO_X}\shO_X[s]\simeq \Mmod[s]f^s,\quad u\otimes s^j\to us^jf^s.$$
The symbol $f^s$ motivates the $\Dmod_X$-action: a derivation $D$ in ${\rm Der}_{\CC}(\shO_X)$ acts by
$$D( us^jf^s)=\big(D(u) s^j+ \frac{D(f)}{f}us^{j+1}\big)f^s.$$
The action of $s$ on $\Mmod[s]f^s$ is the obvious one, while the action of $t$ is given by the automorphism
``$s\to s+1$", that is
$$us^jf^s\to fu(s+1)^jf^s.$$

\begin{remark}
In light of Remarks~\ref{bij_f_mult} and \ref{V_filt_bij_f_mult}, if $\Mmod$ is a $\Dmod_X$-module on which multiplication by $f$ is bijective, a $V$-filtration on $\iota_+\Mmod$
can be characterized as an exhaustive, decreasing, discrete, left continuous, rational filtration $(V^{\gamma}=V^{\gamma}\iota_+\Mmod)_{\gamma\in\QQ}$, that satisfies the following conditions:
\begin{enumerate}
\item[i')] Each $V^{\gamma}$ is a coherent module over $\Dmod_X\langle t, t^{-1}, s\rangle$.
\item[ii')] For every $\gamma\in\QQ$, we have
$$t\cdot V^{\gamma}=V^{\gamma+1}.$$
\item[iii)] For every $\gamma\in\QQ$, we have
$$\partial_t\cdot V^{\gamma}\subseteq V^{\gamma-1}.$$
\item[iv)] For every $\gamma\in\QQ$, 
the operator $s+\gamma$ acts nilpotently on ${\rm Gr}_V^{\gamma}$.
\end{enumerate}
\end{remark}

The next proposition contains the promised  
description of $\iota_+\Mmod$. While the result is well known (in fact, in the case
$\Mmod=\shO_X(*H)$, this has already been noticed in \cite{Malgrange}), we sketch the proof
since we will need the explicit description of the isomorphism.
For every $i\geq 0$, we put
$$Q_i(x)=i!\cdot {{x+i-1}\choose i}:=\prod_{j=0}^{i-1}(x+j)\in\ZZ[x]$$
(with the convention $Q_0=1$).

\begin{proposition}\label{alternative_description}
If $\Mmod$ is a $\Dmod_X$-module on which $f$ acts bijectively, then we have an isomorphism
of $\Dmod_X\langle t,t^{-1},s\rangle$-modules
\begin{equation}\label{eq_alternative_description}
\Mmod[s]f^s\simeq \iota_+\Mmod,\quad us^jf^s\to u\otimes(-\partial_tt)^j\delta.
\end{equation}
The inverse isomorphism is given by
\begin{equation}\label{eq2_alternative_description}
u\otimes\partial_t^j\delta\to \frac{u}{f^j}Q_j(-s)f^s.
\end{equation}
\end{proposition}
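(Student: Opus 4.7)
The plan is to define the forward map $\Phi\colon \Mmod[s]f^s\to \iota_+\Mmod$ by the formula (\ref{eq_alternative_description}), verify directly that it is $\Dmod_X\langle t,t^{-1},s\rangle$-linear, and then produce an explicit inverse by reading off the relation between the $\partial_t$-action and the $s$-action on $\iota_+\Mmod$. The bridge between the two formulas is the ring identity
\[
\partial_t^j\;=\;Q_j(-s)\,t^{-j}\qquad\text{in}\quad \Dmod_X\langle t,t^{-1},s\rangle,
\]
which I would establish by induction on $j$: the step from $j$ to $j+1$ combines $\partial_t=-st^{-1}$, the commutation $t^{-1}Q_j(-s)=Q_j(-s+1)\,t^{-1}$ read off from (\ref{commutator_s_and_t_v2}), and the elementary factorization $(-s)\,Q_j(-s+1)=Q_{j+1}(-s)$.

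Since $\Mmod[s]f^s\simeq \Mmod\otimes_{\shO_X}\shO_X[s]$ as $\shO_X$-modules, the assignment $us^jf^s\mapsto u\otimes(-\partial_tt)^j\delta$ extends $\shO_X$-linearly to a well-defined map $\Phi$. I would then check $\Dmod_X\langle t,t^{-1},s\rangle$-linearity in three pieces. Compatibility with the $s$-action is tautological. Compatibility with the $t$-action follows from $ts=(s+1)t$ together with $t(u\otimes\delta)=fu\otimes\delta$. For the $\Dmod_X$-action, any derivation $D$ of $\shO_X$ commutes with both $t$ and $\partial_t$ in $\Dmod_{X\times\CC}$, hence with $s$ on each side; so compatibility reduces to the case $j=0$, which is a direct check comparing the formula for $D$ on $\iota_+\Mmod$ from \S\ref{set-up} against the identity $s\delta=-f\partial_t\delta$.

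To construct the inverse, I would apply the ring identity above to $\delta$, using $t\delta=f\delta$ and that $s$ commutes with multiplication by elements of $\shO_X$, to obtain
\[
Q_j(-s)\delta\;=\;f^j\partial_t^j\delta\quad\text{in }\iota_+\Mmod.
\]
The assignment $\Psi(u\otimes\partial_t^j\delta):=\tfrac{u}{f^j}\,Q_j(-s)f^s$, which is well-defined precisely because $f$ acts bijectively on $\Mmod$, then satisfies $\Phi\Psi=\id$ by a direct calculation:
\[
\Phi\bigl(\Psi(u\otimes\partial_t^j\delta)\bigr)\;=\;\tfrac{u}{f^j}\otimes Q_j(-s)\delta\;=\;\tfrac{u}{f^j}\otimes f^j\partial_t^j\delta\;=\;u\otimes\partial_t^j\delta.
\]
The same identity realizes $\{(-\partial_tt)^j\delta\}_{j\ge 0}$ as a second $\Mmod$-basis of $\iota_+\Mmod$, related to $\{\partial_t^j\delta\}_{j\ge 0}$ by a degree-triangular change of basis with invertible diagonal entries $f^j$; this gives the injectivity of $\Phi$ and completes the proof. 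The sole place the bijectivity hypothesis on $f$ enters — and thus the main technical point — is exactly the invertibility of these $f^j$; everything else is formal manipulation in $\Dmod_X\langle t,t^{-1},s\rangle$.
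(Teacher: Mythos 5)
Your proof is correct and takes essentially the same route as the paper: the key computational identity $Q_j(\partial_tt)\delta = f^j\partial_t^j\delta$ (which you phrase as the ring identity $\partial_t^j = Q_j(-s)t^{-j}$ applied to $\delta$) and a degree-in-$s$-versus-$\partial_t$ triangularity argument with diagonal entries $(\pm1)^j f^j$, which is precisely what the paper's filtration $G_p$ encodes. The only difference is organizational—you abstract the identity to $\Dmod_X\langle t,t^{-1},s\rangle$ before evaluating on $\delta$, and you spell out the linearity check the paper leaves as "straightforward"—and incidentally your induction step correctly produces $Q_j(-s+1)$ where the paper's displayed computation has a small typo ($Q_j(-s-1)$).
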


\begin{proof}
It is straightforward to check that the map in (\ref{eq_alternative_description}) is $\Dmod_X\langle t,t^{-1},s\rangle$-linear.
In order to see that it is an isomorphism, consider on $\Mmod[s]f^s$ and $\iota_+\Mmod$ the filtrations given by
$$G_p\Mmod[s]f^s=\bigoplus_{i=0}^p\Mmod s^jf^s\quad\text{and}\quad G_p\iota_+\Mmod=\bigoplus_{j=0}^p\Mmod\otimes_{\shO_X}\shO_X\partial_t^j\delta.$$
Note that the map (\ref{eq_alternative_description}) preserves the filtrations. Moreover, we have canonical isomorphisms
$$G_p\Mmod[s]f^s/G_{p-1}\Mmod[s]f^s\simeq\Mmod\simeq G_p\iota_+\Mmod/G_{p-1}\iota_+\Mmod$$
such that the map induced by (\ref{eq_alternative_description}) is given by multiplication with $(-1)^pf^p$.
Since this is an isomorphism, we conclude by induction on $p$ that each induced map $G_p\Mmod[s]f^s\to G_p\iota_+\Mmod$
is an isomorphism. 

The formula for the inverse isomorphism follows if we show that in $\iota_+\shO_X(*H)$ we have
$$Q_j(\partial_tt)\delta=f^j\partial_t^j\delta\quad\text{for all}\quad j\geq 0.$$
We argue by induction on $j$, the case $j=0$ being obvious. Assuming the formula for some $j$, we apply 
(\ref{commutator_s_and_t_v2}) and the fact that $\partial_t=-st^{-1}$ to write
$$f^{j+1}\partial_t^{j+1}\delta=f\partial_tQ_j(-s)\delta=-fst^{-1}Q_j(-s)\delta=f(-s)Q_j(-s-1)t^{-1}\delta=Q_{j+1}(-s)\delta.$$
This completes the proof of the proposition.
\end{proof}

\medskip

We now come to the main result of this section, relating the
$V$-filtrations on $\iota_+\Mmod(f^{\beta})$ and $\iota_+\shO_X(*H)$ as follows.

\begin{proposition}\label{correspondence}
For every $\beta\in\QQ$, we have an isomorphism of $\Dmod_X\langle t,t^{-1},s\rangle$-modules
$$\Phi\colon \iota_+\Mmod(f^{\beta})\to \iota_+\shO_X(*H),$$
where on the right-hand side $\Dmod_X\langle t,t^{-1},s\rangle$ acts via the automorphism 
$\Dmod_X\langle t,t^{-1},s\rangle\to \Dmod_X\langle t,t^{-1},s\rangle$ that maps $s$ to $s-\beta$ and is the identity
on $\Dmod_X$ and on $t$. The isomorphism $\Phi$ is given by 
$$\Phi\left(\sum_{i=0}^p h_if^{\beta} \otimes\partial_t^i\delta \right)=\sum_{i=0}^pg_i\otimes\partial_t^i\delta,$$
where 
\begin{equation}\label{eq1_correspondence}
\sum_{i=0}^p\frac{h_i}{f^i}Q_i(-s+\beta)=\sum_{i=0}^p\frac{g_i}{f^i}Q_i(-s)\quad\text{in}\quad \shO_X(*H)[s].
\end{equation}
The isomorphism $\Phi$ translates the $V$-filtration by $-\beta$, in the sense that
\begin{equation}\label{eq3_correspondence}
\Phi\big(V^{\gamma}\iota_+\Mmod(f^{\beta})\big)=V^{\gamma-\beta}\iota_+\shO_X(*H)\quad\text{for every}\quad \gamma\in\QQ.
\end{equation}
\end{proposition}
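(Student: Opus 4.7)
The natural approach is to factor $\Phi$ through Proposition \ref{alternative_description}. Under the isomorphisms $\iota_+\Mmod(f^\beta) \simeq \Mmod(f^\beta)[s]f^s$ and $\iota_+\shO_X(*H) \simeq \shO_X(*H)[s]f^s$, I would construct $\Phi$ via the intermediate ``absorption'' map
$$\Psi \colon \Mmod(f^\beta)[s]f^s \to \shO_X(*H)[s]f^s, \qquad h f^\beta P(s) f^s \mapsto h P(s-\beta) f^s,$$
corresponding heuristically to writing $f^\beta \cdot f^s = f^{s+\beta}$ and relabelling $s+\beta \leftrightarrow s$. This is visibly an $\shO_X$-linear bijection, with inverse $h P(s) f^s \mapsto h f^\beta P(s+\beta) f^s$.

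Next I would check that $\Psi$ is equivariant for the $\Dmod_X\langle t,t^{-1},s\rangle$-action, with the prescribed twist on $s$. For a derivation $D$, the extra term $h\beta D(f)/f \cdot f^\beta$ arising from $D(hf^\beta)$ cancels exactly against the $\beta$-translation hidden in $P(s)$, so both $D \circ \Psi$ and $\Psi \circ D$ produce $D(h)P(s-\beta)f^s + h D(f)/f \cdot s P(s-\beta)f^s$. The action of $t$ is $P(s) \mapsto fP(s+1)$ on both sides and commutes with $\Psi$, while multiplication by $s$ on the source becomes multiplication by $s-\beta$ on the target, exactly the twist in the statement. Composing with the explicit formulas of Proposition \ref{alternative_description}, the element $\sum h_i f^\beta \otimes \partial_t^i \delta$ is carried first to $\sum \frac{h_i f^\beta}{f^i} Q_i(-s) f^s$ and then by $\Psi$ to $\sum \frac{h_i}{f^i} Q_i(-s+\beta) f^s$; equating with $\sum \frac{g_i}{f^i} Q_i(-s) f^s$ and using that $\{Q_i(-s)\}_{i\geq 0}$ is a basis of $\CC[s]$ yields precisely the defining identity (\ref{eq1_correspondence}).

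For (\ref{eq3_correspondence}) I invoke the uniqueness of the $V$-filtration recalled earlier. Set $W^\gamma := \Phi\bigl(V^{\gamma+\beta}\iota_+\Mmod(f^\beta)\bigr)$; the claim is that $W^\bullet$ coincides with $V^\bullet \iota_+\shO_X(*H)$ for the usual (untwisted) action. Coherence over $\Dmod_X\langle t,t^{-1},s\rangle$ is insensitive to the automorphism $s \mapsto s-\beta$, and the conditions involving $t$ and $\partial_t$ transport unchanged since the twist fixes both; discreteness, rationality, and left-continuity all survive a rational shift by $\beta$. The only condition where the twist plays a nontrivial role is nilpotence: the nilpotency of $s+(\gamma+\beta)$ on ${\rm Gr}_V^{\gamma+\beta}\iota_+\Mmod(f^\beta)$ becomes, after transport through $\Phi$, the nilpotency of $(s-\beta)+(\gamma+\beta) = s+\gamma$ on $W^\gamma/W^{>\gamma}$, exactly condition (iv) for the filtration on $\iota_+\shO_X(*H)$. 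Uniqueness then forces $W^\gamma = V^\gamma\iota_+\shO_X(*H)$.

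The main obstacle is really just careful bookkeeping of the twist on $s$: one must check that the $\beta$-shift absorbed into $s$ by $\Psi$ exactly cancels the $\beta$-contribution of derivations acting on $f^\beta$. Once $\Psi$ is pinned down, the equivariance reduces to a one-line computation, and the filtration statement follows formally from uniqueness.
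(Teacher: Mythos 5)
Your proposal is essentially the paper's own proof: you construct the same intermediate map $\Phi'$ (your $\Psi$) by transporting through the $\Mmod[s]f^s$ presentations of Proposition~\ref{alternative_description}, verify equivariance, and deduce the filtration shift from the uniqueness of the $V$-filtration. One small inaccuracy in the last paragraph: the automorphism $s\mapsto s-\beta$ does \emph{not} fix $\partial_t$, since $\partial_t=-st^{-1}$ is sent to $\partial_t+\beta t^{-1}$; condition (iii) still transports because $t^{-1}W^{\gamma}=W^{\gamma-1}$ once (ii$'$) is in place, so $\beta t^{-1}W^{\gamma}\subseteq W^{\gamma-1}$, but this should be checked rather than dismissed by the (false) claim that $\partial_t$ is fixed by the twist.
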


\begin{proof}
The isomorphism is more transparently described via the identifications provided by Proposition~\ref{alternative_description}, which gives isomorphisms
$$\varphi_1\colon \iota_+\Mmod(f^{\beta})\to \Mmod(f^{\beta})[s]f^s,\quad\varphi_1(uf^{\beta}\otimes\partial_t^i\delta)=\frac{u}{f^i}f^{\beta}Q_i(-s)f^s\quad\text{and}$$
$$\varphi_2\colon \iota_+\shO_X(*H)\to \shO_X(*H)[s]f^s,\quad\varphi_2(u\otimes\partial_t^i\delta)=\frac{u}{f^i}Q_i(-s)f^s.$$
It is then straightforward to check that if we define
$$\Phi'\colon \Mmod(f^{\beta})[s]f^s\to \shO_X(*H)[s]f^s,\quad (uf^{\beta})P(s)f^s\to uP(s-\beta)f^s,$$
this is an isomorphism of $\Dmod_X\langle t,t^{-1},s\rangle$-modules, where the action on the right-hand side is via the automorphism of 
$\Dmod_X\langle t,t^{-1},s\rangle$ described in the statement of the proposition. 
If we take $\Phi=\varphi_2^{-1}\circ\Phi'\circ\varphi_1$, we obtain an isomorphism that satisfies (\ref{eq1_correspondence}).
The fact that $\Phi$ translates the $V$-filtration by $-\beta$ is a consequence of the uniqueness of the $V$-filtration and of the fact that our
automorphism of $\Dmod_X\langle t,t^{-1},s\rangle$ is the identity on $\Dmod_X$ and $t$ and maps $s$ to $s-\beta$. 
\end{proof}

\begin{remark}\label{attrib_Saito}
In a first version of this paper, we showed that the isomorphism $\Phi$ translates the $V$-filtration by $-\beta$ using the description of this filtration
in terms of Bernstein-Sato polynomials due to Sabbah \cite{Sabbah} (see Proposition~\ref{sabbah} below). 
The argument above, based on the uniqueness of the $V$-filtration, was pointed out to us by M.~Saito.
\end{remark}

We next give a more explicit description of the transformation $\Phi$.

\begin{proposition}\label{precise_formula}
If $\Phi\colon \iota_+\Mmod(f^{\beta})\to \iota_+\shO_X(*H)$ is the map in Proposition~\ref{correspondence},
and if $\Phi(u)=v$, where 
$$u=\sum_{i=0}^ph_if^{\beta}\otimes\partial_t^i\delta\quad\text{and}\quad v=\sum_{i=0}^pg_i\otimes\partial_t^j\delta,$$
then
\begin{equation}\label{eq1_precise_formula}
\frac{h_i}{f^i}=\sum_{j=i}^p{j\choose i}Q_{j-i}(-\beta)\frac{g_j}{f^j}.
\end{equation}
\end{proposition}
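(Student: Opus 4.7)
The statement amounts to inverting the polynomial identity (\ref{eq1_correspondence}) that characterizes $\Phi$. The plan is to expand each basis element $Q_j(-s)$ in terms of the basis $\{Q_i(-s+\beta)\}_{i\geq 0}$ of $\CC[s]$, substitute into (\ref{eq1_correspondence}), and read off the coefficients.

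The key ingredient is the Chu--Vandermonde identity for rising factorials:
\[
(a+b)^{\overline{n}}=\sum_{k=0}^n \binom{n}{k}\, a^{\overline{k}}\,b^{\overline{n-k}},
\]
where $x^{\overline{n}}=x(x+1)\cdots(x+n-1)=Q_n(x)$. Applying this with $a=-s+\beta$, $b=-\beta$ and $n=j$ gives
\[
Q_j(-s)=Q_j\bigl((-s+\beta)+(-\beta)\bigr)=\sum_{k=0}^j\binom{j}{k}Q_k(-s+\beta)\,Q_{j-k}(-\beta).
\]
(This can be verified by a short direct induction on $j$ if one prefers not to invoke the named identity, or by the standard argument based on $x^{\overline{n}}=\Gamma(x+n)/\Gamma(x)$.)

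Given this, the rest is bookkeeping. Starting from
\[
\sum_{i=0}^p\frac{h_i}{f^i}Q_i(-s+\beta)=\sum_{j=0}^p\frac{g_j}{f^j}Q_j(-s),
\]
I would substitute the expansion above into the right-hand side and interchange the order of summation to obtain
\[
\sum_{j=0}^p\frac{g_j}{f^j}\sum_{k=0}^j\binom{j}{k}Q_{j-k}(-\beta)\,Q_k(-s+\beta)=\sum_{k=0}^p Q_k(-s+\beta)\sum_{j=k}^p\binom{j}{k}Q_{j-k}(-\beta)\,\frac{g_j}{f^j}.
\]
Since $\{Q_k(-s+\beta)\}_{k=0}^p$ is a basis for the polynomials in $s$ of degree $\leq p$ over $\shO_X(*H)$, comparing coefficients yields (\ref{eq1_precise_formula}) after renaming $k$ as $i$.

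There is no real obstacle here; the only nontrivial point is recognizing the rising-factorial version of Chu--Vandermonde as the correct change-of-basis formula. Once that identity is in hand, the result falls out by a one-line manipulation, so the proof is essentially a formal expansion.
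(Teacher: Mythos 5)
Your proposal is correct and takes essentially the same approach as the paper: the paper's Lemma~\ref{eq2_precise_formula} is exactly the rising-factorial Chu--Vandermonde identity $Q_j(x)=\sum_{i=0}^j\binom{j}{i}Q_{j-i}(y)Q_i(x-y)$ (there proved via the generating function $\sum_{j\ge 0}\binom{m+j-1}{j}z^j=(1-z)^{-m}$), applied with $x=-s$, $y=-\beta$, followed by the same interchange of summation and comparison of coefficients in the basis $\{Q_i(-s+\beta)\}$.
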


\begin{proof}
Letting $x=-s$ and $y=-\beta$ in Lemma \ref{eq2_precise_formula} in the Appendix, and using the definition of $\Phi$, 
we get
$$\sum_{i=0}^p\frac{h_i}{f^i}Q_i(-s+\beta)=\sum_{j=0}^p\frac{g_j}{f^j}Q_j(-s)$$
$$=\sum_{j=0}^p\sum_{i=0}^j\frac{g_j}{f^j}{j\choose i}Q_{j-i}(-\beta)Q_i(-s+\beta)$$
$$=\sum_{i=0}^p\left(\sum_{j=i}^p\frac{g_j}{f^j}{j\choose i} Q_{j-i}(-\beta)\right)Q_i(-s+\beta).$$
Since the polynomials $Q_i(-s+\beta)$, with $0\leq i\leq p$, are linearly independent over $\QQ$, the equality between 
the first and the last expressions above gives  (\ref{eq1_precise_formula}). 
\end{proof}

\begin{remark}
Proposition \ref{precise_formula} is used below in the proof of Theorem \ref{general_description}. It was noted more recently in \cite[\S2.4]{JKSY} that there is a proof of the theorem along the same lines as here, which however avoids the use of this precise formula.
\end{remark}

\subsection{From the Hodge ideals of $D = \alpha H$ to $V^{\alpha}\iota_+\shO_X$}\label{one_way}
We now begin the study of the Hodge filtration on $\Mmod(f^{\beta})$. 
Recall that we are considering a $\QQ$-divisor $D = \alpha H$, with $\alpha \in \QQ_{> 0}$, and $H$ defined 
by $f \in \shO_X (X)$. We denote $Z=H_{\rm red}$, and assume from now on that $\lceil D \rceil =Z$.

Recall from the introduction that for every 
nonnegative integer $p$, there is an ideal sheaf  $I''_p (D)$ such that 
$$F_p \Mmod(f^{\beta}) =I''_p (D)\otimes \shO_X\big((p+1)H\big)f^{\beta}.$$
This ideal is related to the $p$-th Hodge ideal of $D$ by the formula
$$I''_p(D )=I_p(D)\cdot\shO_X(pZ-pH).$$
In particular, we see that $I''_0(D)=I_0(D)$.

\begin{definition}
For every $p \ge 0$, we define the subsheaf $\widetilde{I}_p(D)$ of $\shO_X$ by 
$$\widetilde{I}_p(D) = \left\{ v\in\shO_X~|~ \exists~ v_0,\ldots,v_{p-1},v_p=v \in \shO_X {\rm ~with~} 
\sum_{i=0}^pv_i\partial_t^i\delta\in V^{\alpha}\iota_+\shO_X \right\}.$$
Since $V^{\alpha}\iota_+\shO_X$ is an $\shO_X$-module, it follows that $\widetilde{I}_p(D)$ is a (coherent) ideal 
in $\shO_X$. As mentioned in the Introduction, when $0 < \alpha \le 1$ this is another way of expressing Saito's microlocal $V$-filtration
\cite{Saito_microlocal} induced on $\shO_X$.

We note that we have made an abuse of notation here: both ideals $I''(D)$ and $\widetilde{I}_p(D)$ 
depend on the choice of $H$, and not just on the ${\mathbf Q}$-divisor $D = \alpha H$. 
However, in what follows $H$ will be fixed, and we hope that this will not lead to any confusion.
\end{definition}

\begin{proposition}\label{first_inclusion}
For every nonnegative integer $p$ and every $g\in I''_p (D)$, 
there is 
$$v=\sum_{j=0}^pv_j\partial_t^j\delta\in V^{\alpha}\iota_+\shO_X$$
such that
\begin{equation}\label{eq1_first_inclusion}
g=\sum_{j=0}^pQ_j(\alpha)f^{p-j}v_j.
\end{equation}
In particular, we have
$$I''_p (D)\subseteq \widetilde{I}_p(D)+(f).$$
\end{proposition}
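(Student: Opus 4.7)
The plan is to use the isomorphism $\Phi$ of Proposition~\ref{correspondence} to transport the Hodge-filtration data for $\Mmod(f^{\beta})$ into the $V$-filtration on $\iota_+\shO_X$, and then to recover the explicit expression for $g$ via the formula of Proposition~\ref{precise_formula} combined with a rising-factorial identity.

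First, given $g\in I''_p(D)$, the element $m:=\frac{g}{f^{p+1}}f^{\beta}$ lies in $F_p\Mmod(f^{\beta})$ by definition of $I''_p(D)$. The compatibility between the Hodge and $V$-filtrations on $\iota_+\Mmod(f^{\beta})$ enforced by the underlying mixed Hodge module structure (Saito's strictness) produces a lift
$$u = \sum_{i=0}^p h_i f^{\beta}\otimes\partial_t^i\delta \;\in\; V^1\iota_+\Mmod(f^{\beta}),$$
with $h_i\in\shO_X(*H)$, such that $\sum_{i=0}^p i!\,h_if^{p-i}=g$. Applying $\Phi$ and using that it shifts the $V$-filtration by $-\beta$, together with the identity $\alpha+\beta=1$ and Remark~\ref{rmk_filtration_O_X} (valid because $\alpha>0$), we get
$$v := \Phi(u) \in V^{1-\beta}\iota_+\shO_X(*H) = V^{\alpha}\iota_+\shO_X,$$
so $v=\sum_{j=0}^p v_j\partial_t^j\delta$ with $v_j\in\shO_X$. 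Proposition~\ref{precise_formula} gives $\frac{h_i}{f^i} = \sum_{j\ge i}\binom{j}{i}Q_{j-i}(-\beta)\frac{v_j}{f^j}$; multiplying by $i!\,f^{p-i}$, summing over $i$, swapping the double sum, and invoking the Vandermonde-type identity $Q_j(\alpha) = \sum_{i=0}^j\binom{j}{i}Q_i(1)Q_{j-i}(-\beta)$ (the Appendix lemma specialized at $x=1$, $y=-\beta$, using $Q_i(1)=i!$ and $1-\beta=\alpha$) produces
$$g = \sum_{i=0}^p i!\,h_if^{p-i} = \sum_{j=0}^p Q_j(\alpha)\,v_j\,f^{p-j},$$
the desired expression.

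For the second assertion, reducing this equality modulo $f$ annihilates every term with $p-j>0$ and leaves $g\equiv Q_p(\alpha)\,v_p\pmod{f}$. Since $v_0,\ldots,v_{p-1}$ witness the membership $v_p\in\widetilde{I}_p(D)$ by the very definition of that ideal, and $Q_p(\alpha)$ is a nonzero rational, we conclude $g\in\widetilde{I}_p(D)+(f)$. The hard part is the construction of the lift $u$ above: the coefficients $h_i$ must be chosen simultaneously so that $u$ lies in $V^1\iota_+\Mmod(f^{\beta})$ and so that the weighted sum $\sum i!\,h_if^{p-i}$ recovers $g$. This relies in an essential way on the description of the Hodge filtration on $\Mmod(f^{\beta})$ developed in \cite{MP3}; once the lift is in hand, everything else is a direct verification from Propositions~\ref{correspondence} and~\ref{precise_formula}.
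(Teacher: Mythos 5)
Your proposal reproduces the paper's overall strategy faithfully: pass to $\iota_+\Mmod(f^\beta)$, produce a suitable element of $V^1\iota_+\Mmod(f^\beta)$, apply $\Phi$ to land in $V^\alpha\iota_+\shO_X$, and then combine Proposition~\ref{precise_formula} with a rising-factorial identity to recover (\ref{eq1_first_inclusion}). The algebra you do with $\Phi$ and the Vandermonde convolution is correct, and the derivation of the ``mod $(f)$'' statement from (\ref{eq1_first_inclusion}) is fine. (One small slip: the Appendix lemma should be specialized at $x=\alpha$, $y=-\beta$, so that $x-y=1$; you wrote $x=1$, $y=-\beta$, though the stated identity $Q_j(\alpha)=\sum_i\binom{j}{i}Q_i(1)Q_{j-i}(-\beta)$ is the right one.)

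The substantive issue is that you assert, rather than prove, the existence of the lift $u=\sum_i h_if^{\beta}\otimes\partial_t^i\delta\in V^1\iota_+\Mmod(f^\beta)$ with the additional constraint $\sum_i i!\,h_if^{p-i}=g$. You flag this yourself as ``the hard part'' and invoke strictness, but this is precisely the content of the paper's proof and it cannot be waved through. The extra linear constraint on the $h_i$ is not satisfied by an arbitrary $V^1$-lift; a specific construction is required. The mechanism is Lemma~\ref{lem_V_filtration} (Saito's strictness result [3.2.3], applicable via Remark~\ref{rmk_lem_V_filtration}), which yields a decomposition
$$\frac{g}{f^{p+1}}f^{\beta}\otimes\delta=\sum_{i=0}^p\partial_t^iu^{(i)},\qquad u^{(i)}\in V^0\iota_+\Mmod(f^{\beta})\cap j_*j^*F_{p-i}\iota_+\Mmod(f^{\beta}).$$
Comparing $\partial_t^0$-coefficients gives $g=f^{p+1}u^{(0)}_0$, where $u^{(0)}=\sum_iu_i^{(0)}f^{\beta}\otimes\partial_t^i\delta$. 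One then takes $u:=tu^{(0)}$, which lies in $V^1$ because $u^{(0)}\in V^0$; its coefficients are $h_i=fu^{(0)}_i-(i+1)u^{(0)}_{i+1}$, and the weighted sum $\sum_i i!\,h_if^{p-i}$ telescopes to $f^{p+1}u^{(0)}_0=g$. Without this construction, the statement ``strictness produces such a lift'' is not a proof, and the argument has a genuine hole at exactly the point you identify as essential.
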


\medskip

Before giving the proof, recall that the Hodge filtration on $\Mmod(f^{\beta})$ induces a Hodge filtration on 
$\iota_+\Mmod(f^{\beta})$,  given by
$$F_p\iota_+\Mmod(f^{\beta})=\bigoplus_{j=0}^pF_{p-j}\Mmod(f^{\beta})\otimes\partial_t^j\delta, \,\,\,\,\,\,{\rm for ~all} \,\,\,\,p \ge 0.\footnote{In the literature, the right-hand side is sometimes taken to define $F_{p+1} \iota_+\Mmod(f^{\beta})$ rather than $F_p$. We find it somewhat more convenient notationally to use this convention here.}$$
This, just as with all the  filtered $\Dmod$-modules we consider here, satisfies the following special 
property.

\begin{lemma}[{\cite[3.2.3]{Saito-MHP}}]\label{lem_V_filtration}
Let $t$ be a nonzero function on the smooth variety $Y$, defining a smooth divisor $H$. If $(M,F)$ is a filtered $\Dmod_Y$-module
with no $t$-torsion, and which carries
a $V$-filtration with respect to $t$ that is compatible with the $F$-filtration in the sense of \cite[3.2]{Saito-MHP}, such that the induced 
morphism
$${\rm Gr}(t)\colon \big({\rm Gr}_V^0(M),F\big)\to \big({\rm Gr}_V^1(M),F\big)$$
is strict, then 
\begin{equation}\label{eq_lem_V_filtration}
F_pM=\sum_{i\geq 0}\partial_t^i(V^0M\cap j_*j^*F_{p-i}M)\quad\text{for all}\quad p,
\end{equation}
where $j\colon Y\smallsetminus H\hookrightarrow Y$ is the inclusion.
\end{lemma}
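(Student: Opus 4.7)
I prove the two inclusions in \eqref{eq_lem_V_filtration} separately; write $R_p$ for the right-hand side.

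For the inclusion $R_p\subseteq F_pM$ I would proceed as follows. The no-torsion hypothesis embeds $M$ into $j_*j^*M=M[t^{-1}]$, and under this identification $j_*j^*F_{p-i}M=(F_{p-i}M)[t^{-1}]$. It therefore suffices to verify that $V^0M\cap(F_{p-i}M)[t^{-1}]\subseteq F_{p-i}M$, since $\partial_t^i$ shifts the Hodge filtration by $i$. This intersection claim reduces to showing that if $m\in V^0M$ satisfies $t^km\in F_{p-i}M$ then $m\in F_{p-i}M$; I would induct on $k$. For $k\geq 2$ the compatibility of $F$ with $V$, combined with the bijectivity of $t\colon V^{k-1}M\to V^kM$ as filtered modules (valid since $k-1>0$), descends $t^{k-1}m$ back into $F_{p-i}$. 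The case $k=1$ is the critical case at the boundary $\alpha=0$, and is handled by the strictness hypothesis on ${\rm Gr}(t)\colon({\rm Gr}_V^0M,F)\to({\rm Gr}_V^1M,F)$, which guarantees that every $F_{p-i}$-filtered element in the image of $t$ lifts to an $F_{p-i}$-filtered element of ${\rm Gr}_V^0M$.

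For the main inclusion $F_pM\subseteq R_p$ I would argue by double induction: outer on $p$, with the trivial base case $F_pM=0$ for $p\ll 0$, and inner upward on the $V$-order. Fix $p$ and assume the statement for all $p'<p$. I prove
$$F_pM\cap V^\alpha M\subseteq R_p\qquad\text{for every }\alpha\in\QQ$$
by upward induction on $\alpha$; the inner base case $\alpha\geq 0$ is immediate because $F_pV^\alpha M\subseteq F_pV^0 M=V^0M\cap F_pM$ sits inside $R_p$ as the $i=0$ summand. For the inductive step with $\alpha_0<0$, take $u\in F_pV^{\alpha_0}M$. Since $\alpha_0+1\neq 1$, the standard $V$-filtration axioms yield an isomorphism $\partial_t\colon{\rm Gr}_V^{\alpha_0+1}M\to{\rm Gr}_V^{\alpha_0}M$, and compatibility of $F$ with $V$ upgrades this to a surjection
$$\partial_t\colon F_{p-1}{\rm Gr}_V^{\alpha_0+1}M\twoheadrightarrow F_p{\rm Gr}_V^{\alpha_0}M.$$
Lifting a preimage of $[u]$ to some $v\in F_{p-1}V^{\alpha_0+1}M$, the difference $u-\partial_tv$ lies in $F_pV^{>\alpha_0}M$, which by discreteness of the $V$-filtration equals $F_pV^{\alpha_0+\epsilon}M$ for some $\epsilon>0$; this belongs to $R_p$ by the inner inductive hypothesis. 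Meanwhile $v\in F_{p-1}M\subseteq R_{p-1}$ by the outer hypothesis, so $\partial_tv\in\partial_tR_{p-1}\subseteq R_p$, and hence $u\in R_p$.

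The expected main obstacle is the pair of borderline strictness checks at $\alpha=0$, namely the $k=1$ step in the easy inclusion and the promotion of the isomorphism $\partial_t$ between graded pieces to a surjection on $F$-filtered pieces in the inductive step. Both rest on the precise content of ``compatibility'' in the sense of \cite[3.2]{Saito-MHP} together with the strictness of ${\rm Gr}(t)$ at $\alpha=0$; these hypotheses together encode bi-strict specializability and supply exactly the strictness of $t$ and $\partial_t$ on all graded pieces that the induction requires. Once these are in hand, the remainder of the argument is clean bookkeeping with the discreteness of the $V$-filtration and the $F$-shift properties of $t$ and $\partial_t$.
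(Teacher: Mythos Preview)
The paper does not give its own proof of this lemma; it is quoted from \cite[3.2.3]{Saito-MHP} and then applied via Remark~\ref{rmk_lem_V_filtration}. So there is nothing in the paper to compare against beyond the citation.

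Your argument is essentially the standard one and is correct in outline. Two small points deserve tightening. First, in the $k=1$ step of the inclusion $R_p\subseteq F_pM$, strictness of ${\rm Gr}(t)$ alone only gives that $[tm]$ lifts to \emph{some} element of $F_{p-i}{\rm Gr}_V^0M$; to conclude that $[m]$ itself lies there you also need injectivity of ${\rm Gr}(t)$, which follows from the no-$t$-torsion hypothesis together with the bijectivity of $t\colon V^{>0}\to V^{>1}$. You then still have to pass from $[m]\in F_{p-i}{\rm Gr}_V^0M$ to $m\in F_{p-i}M$, which requires writing $m=m'+m''$ with $m'\in F_{p-i}V^0M$ and $m''\in V^{>0}M$, and using filtered bijectivity of $t$ on $V^{>0}$ to get $m''\in F_{p-i}M$. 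Second, your ``upward induction on $\alpha$'' is really a descending induction (from the base case $\alpha\geq 0$ down to negative $\alpha$), terminating for each fixed $u$ because the $V$-filtration is discrete and exhaustive; the argument is fine, but the phrasing is slightly confusing. With these clarifications your proof goes through.
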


\begin{remark}\label{rmk_lem_V_filtration}
The conclusion of the lemma applies in particular when $(M,F)$ is a direct summand of a filtered $\Dmod_Y$-module $(N, F)$ that underlies a mixed Hodge module (and hence is regular and quasi-unipotent, so it satisfies \cite[3.2]{Saito-MHP}), and such that $N$ has no $t$-torsion and 
$${\rm Gr}(t)\colon \big({\rm Gr}_V^0(N),F\big)\to \big({\rm Gr}_V^1(N),F\big)$$
is a filtered isomorphism. We may therefore apply it
to the filtered $\Dmod_{X\times\CC}$-module $\iota_+\Mmod(f^{\beta})$. Indeed, according to 
\cite[Lemma 2.11]{MP3},  $\Mmod(f^{\beta})$ is a filtered direct summand in a $\Dmod$-module on $X$ of the form
$j_+ (Q, F)$, where $j\colon U \hookrightarrow X$ is the natural inclusion of $U = X \smallsetminus Z$, and 
$(Q, F)$ is the filtered $\Dmod$-module underlying a mixed Hodge module on $U$; hence $\big(\iota_+\Mmod(f^{\beta}), F\big)$
is a summand in $(j \times {\rm id_\CC})_+  (\iota_U)_+ (Q, F)$, where $\iota_U$ is the graph embedding corresponding to $f\vert_U$. But 
filtered $\Dmod$-modules such as the latter satisfy the properties above, by the general construction of direct images of Hodge modules via open 
embeddings in \cite[Proposition~2.8]{Saito-MHM} (cf. also \cite[Proposition~4.2]{Saito-B}).
\end{remark}

We can now prove the main result of the section.

\begin{proof}[Proof of Proposition~\ref{first_inclusion}]
The argument is similar to that in \cite{Saito-MLCT}, which treats the case when $D$ is reduced (i.e. $H$ is reduced 
and $\alpha=1$). In what follows we may, and will assume, that $X$ is affine. 

Let $g\in I''_p (D)$. It follows from the definition of $I''_p (D)$ that  we have
$$\frac{g}{f^{p+1}}f^{\beta}\otimes\delta\in F_p \iota_+\Mmod(f^{\beta}).$$
Using Remark~\ref{rmk_lem_V_filtration}, 
we may apply Lemma~\ref{lem_V_filtration} for the $\Dmod_{X\times\CC}$-module $\iota_+\Mmod(f^{\beta})$,
hence we can write
\begin{equation}\label{eq2_first_inclusion}
\frac{g}{f^{p+1}}f^{\beta}\otimes\delta=\sum_{i=0}^p\partial_t^iu^{(i)},
\end{equation}
with $u^{(i)}\in V^0 \iota_+\Mmod(f^{\beta})\cap j_*j^* F_{p-i}\iota_+\Mmod(f^{\beta})$ for all $i$. 
If we write
$$u^{(0)}=\sum_{i=0}^pu_i^{(0)}f^{\beta}\otimes\partial_t^i\delta,$$
then it follows from (\ref{eq2_first_inclusion}) that 
$$
g=f^{p+1}u^{(0)}_0.
$$
Note now that since $u^{(0)}\in V^0 \iota_+\Mmod(f^{\beta})$, we have
$$tu^{(0)}\in V^1\iota_+\Mmod(f^{\beta}),$$
and by the definition of the action of $t$, we can write
$$tu^{(0)}=\sum_{i=0}^pfu_i^{(0)}f^{\beta}\otimes\partial_t^{i}\delta -\sum_{i=1}^piu^{(0)}_i f^{\beta}\otimes\partial_t^{i-1}\delta.$$

We now use the transformation $\Phi$ in Proposition~\ref{correspondence} to deduce that
$$\Phi(tu^{(0)}) \in V^{\alpha}\iota_+\shO_X(*H)=V^{\alpha}\iota_+\shO_X,$$
where we use the fact that $V^{\gamma}\iota_+\shO_X(*H)=V^{\gamma}\iota_+\shO_X$ for every $\gamma>0$, by Remark~\ref{rmk_filtration_O_X}.
For every $j\geq i$, let 
$$a_{i,j} :={j\choose i}Q_{j-i}(-\beta).$$ 
It follows from Proposition~\ref{precise_formula} that 
there are $v_0,\ldots,v_p\in\shO_X(X)$ such that
$$fu^{(0)}_i-(i+1)u^{(0)}_{i+1}=\sum_{j=i}^pa_{i,j}\frac{v_j}{f^{j-i}},$$
with the convention that $u^{(0)}_{p+1}=0$.
Therefore we have
$$g=f^{p+1}u^{(0)}_0=\sum_{i=0}^p\big(i! f^{p+1-i}u_i^{(0)}-(i+1)! f^{p-i}u_{i+1}^{(0)}\big)
=\sum_{i=0}^p\sum_{j=i}^pi! a_{i,j}f^{p-j}v_j$$
$$=\sum_{j=0}^p f^{p-j}v_j\sum_{i=0}^j i! {j\choose i} Q_{j-i}(-\beta)=\sum_{j=0}^pQ_j(\alpha)f^{p-j}v_j,$$
where the last equality follows from Lemma~\ref{formula_alternating_sum}.
We thus have (\ref{eq1_first_inclusion}). The last assertion in the statement is clear, 
since $v_p\in \widetilde{I}_p(\alpha H)$ and $Q_p(\alpha)\neq 0$.
\end{proof}

\subsection{From $V^{\alpha}\iota_+\shO_X$ to the Hodge ideals of $D = \alpha H$}\label{other_way}
Keeping the notation of \S\ref{one_way},  the following is the main result of this section:

\begin{proposition}\label{second_inclusion}
For every nonnegative integer $p$, if $v=\sum_{j=0}^pv_j\partial_t^j\delta \in V^{\alpha}\iota_+\shO_X$, then 
\begin{equation}\label{eq1_second_inclusion}
\sum_{j=i}^p{j\choose i}Q_{j-i}(\alpha)f^{p-j}v_j\in I''_{p-i}(D)\quad\text{for}\quad 0\leq i\leq p.
\end{equation}
In particular, we have
$$\widetilde{I}_p(D)\subseteq I''_p (D)+(f).$$
\end{proposition}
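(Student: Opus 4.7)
The plan is to reverse the argument of Proposition \ref{first_inclusion}. First I would apply $\Phi^{-1}$ from Proposition \ref{correspondence} to the given $v = \sum_{j=0}^p v_j \partial_t^j \delta \in V^\alpha \iota_+ \shO_X$, obtaining $u := \Phi^{-1}(v) \in V^1 \iota_+ \Mmod(f^\beta)$, since $\Phi$ translates the $V$-filtration by $-\beta$ and $\alpha = 1-\beta$. Next, because $t \colon V^0 \iota_+ \Mmod(f^\beta) \to V^1 \iota_+ \Mmod(f^\beta)$ is bijective (by Remarks \ref{bij_f_mult} and \ref{V_filt_bij_f_mult}), there is a unique $w \in V^0 \iota_+ \Mmod(f^\beta)$ with $tw = u$; writing $w = \sum_{i=0}^p w_i f^\beta \otimes \partial_t^i \delta$ and $u = \sum h_i f^\beta \otimes \partial_t^i \delta$, the identity $tw = u$ combined with (\ref{formula_mult_t}) yields the recursion $fw_i - (i+1)w_{i+1} = h_i$ with $w_{p+1}=0$, solved by $w_i = \sum_{j=i}^p (j!/i!) \cdot h_j/f^{j-i+1}$. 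Substituting the formula for $h_j$ from Proposition \ref{precise_formula} and invoking the combinatorial identity of Lemma \ref{formula_alternating_sum}, a direct calculation produces
$$f^{p-i+1} w_i = \sum_{j=i}^p \binom{j}{i} Q_{j-i}(\alpha) f^{p-j} v_j,$$
an element of $\shO_X$ (since $v_j \in \shO_X$ and $p-j \ge 0$).

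The essence of the proof is then to establish $w \in F_p \iota_+ \Mmod(f^\beta)$. Granting this, the direct sum decomposition $F_p \iota_+ \Mmod(f^\beta) = \bigoplus_{i=0}^p F_{p-i}\Mmod(f^\beta) \otimes \partial_t^i \delta$ together with $F_{p-i} \Mmod(f^\beta) = I''_{p-i}(D) \otimes \shO_X((p-i+1)H) f^\beta$ forces $f^{p-i+1} w_i \in I''_{p-i}(D)$, which is precisely (\ref{eq1_second_inclusion}).

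The main obstacle will be verifying $w \in F_p \iota_+ \Mmod(f^\beta)$. My plan is to apply Lemma \ref{lem_V_filtration} to $\iota_+ \Mmod(f^\beta)$ (valid by Remark \ref{rmk_lem_V_filtration}), together with the strict interaction between the Hodge and $V$-filtrations on a Hodge module: knowing already that $w \in V^0$, the task reduces to exhibiting a decomposition $w = \sum_k \partial_t^k u^{(k)}$ with $u^{(k)} \in V^0 \iota_+ \Mmod(f^\beta) \cap j_* j^* F_{p-k} \iota_+ \Mmod(f^\beta)$, which one expects to obtain by running the decomposition used in the proof of Proposition \ref{first_inclusion} in reverse starting from $v$; the algebraic identity above is essentially the shadow of this strictness statement. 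Finally, for the concluding containment $\widetilde{I}_p(D) \subseteq I''_p(D) + (f)$: setting $i=0$ in (\ref{eq1_second_inclusion}), all summands with $j<p$ are divisible by $f$, so modulo $(f)$ only $Q_p(\alpha) v_p$ remains, and since $\alpha>0$ gives $Q_p(\alpha)\ne 0$, we conclude $v_p \in I''_p(D) + (f)$ for every $v_p \in \widetilde{I}_p(D)$.
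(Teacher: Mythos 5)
Your algebraic setup is correct and matches what the paper records in Remark~\ref{formula_w}: applying $\Phi^{-1}$, solving $tw = u$ for $w$, and computing $f^{p-i+1}w_i = \sum_{j\ge i}\binom{j}{i}Q_{j-i}(\alpha)f^{p-j}v_j$ is exactly the right translation of the claim into the statement $w \in F_p\,\iota_+\Mmod(f^\beta)$. The final step deducing $\widetilde{I}_p(D) \subseteq I''_p(D) + (f)$ from the $i=0$ case is also fine. The problem is that you leave the crux — actually proving $w \in F_p\,\iota_+\Mmod(f^\beta)$ — essentially unargued.

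Your sketch for this step doesn't work. You propose to verify $w \in F_p$ by exhibiting a decomposition $w = \sum_k \partial_t^k u^{(k)}$ with $u^{(k)} \in V^0 \cap j_*j^* F_{p-k}$ "by running the decomposition used in Proposition~\ref{first_inclusion} in reverse." But Proposition~\ref{first_inclusion} goes from an element \emph{already known} to lie in $F_p$ and produces a decomposition via Lemma~\ref{lem_V_filtration}; there is no way to "reverse" this to manufacture a decomposition for an element not yet known to lie in $F_p$. Even the simplest candidate, trying to show directly that $w \in V^0 \cap j_*j^* F_p$, already runs into trouble: membership in $j_*j^*F_p$ is a nontrivial statement about $w$ restricted to $X\times\CC^*$, and nothing in the hypothesis $v\in V^\alpha$ hands it to you. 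The "shadow of strictness" language is suggestive but it is not a proof.

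The paper's actual proof fills this gap with two ingredients you don't invoke. First, it proceeds by induction on $p$: knowing the statement for smaller $p$ and applying it to $(f-t)^i v$ (Remark~\ref{first_ones_by_induction}) gives $w_i f^\beta \in F_{p+1-i}\Mmod(f^\beta)$ for $i\ge 1$, reducing everything to the single coefficient $w_0$. Second, and crucially, it uses Lemma~\ref{prop_V_filtration} — the equality $t\cdot V^\gamma F_q = V^{\gamma+1}F_q$ for $\gamma\ge 0$ — to show that $w_0 f^\beta \in F_{p+1}\Mmod(f^\beta)$ is \emph{equivalent} to $fw_0 f^\beta \in F_{p+1}\Mmod(f^\beta)$, since $w\in V^0$ and $u = tw \in V^1$. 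Iterating this equivalence with $v$ replaced by $fv, f^2 v, \dots, f^{p+1}v$ reduces the problem to $f^{p+2}w_0 f^\beta \in F_{p+1}$, which holds trivially because $f^{p+2}w_0\in\shO_X$ and $\shO_X f^\beta\subseteq F_0$. Your proposal has neither the induction nor the use of Lemma~\ref{prop_V_filtration}, and without them the central claim remains unproved.
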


\begin{remark}\label{first_ones_by_induction}
We will prove the proposition by induction on $p$. Note that if we know it for all $q<p$, 
then we know the statements in
(\ref{eq1_second_inclusion}) for $1\leq i\leq p$.
Indeed, since $v\in V^{\alpha}\iota_+\shO_X$, we also have
$$V^{\alpha}\iota_+\shO_X\ni (f-t)v=\sum_{j=1}^pjv_j\partial_t^{j-1}\delta.$$
Iterating this, we conclude that for every $1\leq i\leq p$, we have
$$V^{\alpha}\iota_+\shO_X\ni (f-t)^iv=\sum_{j=i}^p\frac{j!}{(j-i)!}v_j\partial_t^{j-i}\delta.$$
Applying the inductive hypothesis for $(f-t)^iv$, we conclude that we have
$$\sum_{j=i}^p\frac{j!}{(j-i)!}Q_{j-i}(\alpha)f^{p-j} v_j\in I''_{p-i}(D),$$
as claimed.
\end{remark}

\begin{remark}\label{formula_w}
Let us explain the significance of the sums on the left-hand side of (\ref{eq1_second_inclusion}). Suppose that $v=\sum_{i=0}^pv_i\partial_t^i\delta
\in V^{\alpha}\iota_+\shO_X$ and 
$$u=\sum_{i=0}^pu_if^{\beta}\otimes \partial_t^i\delta \in V^1\iota_+\Mmod(f^{\beta})$$ is such that $\Phi(u)=v$. Note that multiplication by $t$ is bijective on 
$\iota_+\Mmod(f^{\beta})$,
and let $w$ be such that $u=tw$. If we write $w=\sum_{i=0}^pw_if^{\beta}\otimes\partial_t^i\delta$, then
\begin{equation}\label{sums_w}
w_i=\sum_{j=i}^p {j\choose i}Q_{j-i}(\alpha)\frac{v_j}{f^{j-i+1}},
\end{equation}
hence
$$f^{p-i +1} w_i$$
are precisely the sums on the left hand side of (\ref{eq1_second_inclusion}).

To check ($\ref{sums_w}$), if we denote by $w'_i$ the right hand side of the formula,  it is enough to show that
$$fw'_i-(i+1)w'_{i+1}=u_i\quad\text{for}\quad 0\leq i\leq p-1,$$
and $fw'_p=u_p$. Since $w'_p=\frac{1}{f}v_p=\frac{1}{f}u_p$, the last equality is clear.
Note now that
$$w'_i=\sum_{j=i}^p\frac{v_j}{f^{j-i+1}}{j\choose i}\cdot\prod_{k=1}^{j-i}(k-\beta).$$
It follows that if $0\leq i\leq p-1$, then
$$fw'_i-(i+1)w'_{i+1}=\sum_{j=i}^p\frac{v_j}{f^{j-i}}{j\choose i}\cdot\prod_{k=1}^{j-i}(k-\beta)-\sum_{j=i+1}^p\frac{v_j}{f^{j-i}}(i+1){j\choose {i+1}}\cdot\prod_{k=1}^{j-i-1}(k-\beta)$$
$$=v_i+\sum_{j=i+1}^p\frac{v_j}{f^{j-i}}\cdot\prod_{k=1}^{j-i-1}(k-\beta)\cdot\left((j-i-\beta){j\choose i}-(i+1){j\choose i+1}\right)$$
$$=
v_i-\sum_{j=i+1}^p\frac{v_j}{f^{j-i}}\cdot {j\choose i}\beta\cdot  \prod_{k=1}^{j-i-1}(k-\beta),$$
where the last equality follows from the fact that 
$$(j-i){j\choose i}=(i+1){j\choose {i+1}}.$$
Using Proposition~\ref{precise_formula}, we thus conclude that
$$fw'_i-(i+1)w'_{i+1}=v_i+\sum_{j=i+1}^p {j\choose i}Q_{j-i}(-\beta)\frac{v_j}{f^{j-i}}=u_i.$$
\end{remark}

\begin{remark}\label{inclusion_in_I0}
It is shown in \cite[Proposition 9.1]{MP3} that, 
if ${\mathcal I}(\gamma H)$ denotes  the multiplier ideal of the $\QQ$-divisor $\gamma H$,  we have
$${\mathcal I}\big((\alpha-\epsilon)H\big)=I_0(\alpha H)=I''_0(\alpha H)$$
for $0<\epsilon\ll 1$.
We refer to \cite[Chapter~9]{Lazarsfeld} for the definition and basic properties of multiplier ideals.
In particular, for every $\alpha\leq 1$, we have
$$(f)={\mathcal I}(H)\subseteq {\mathcal I}\big((\alpha-\epsilon)H\big)=I_0(\alpha H).$$
\end{remark}

\medskip

The following property of the Hodge filtration on $\iota_+\Mmod(f^{\beta})$ is probably
well known to the experts, but we include a proof for the benefit of the reader. 

\begin{lemma}\label{prop_V_filtration}
For every $p\in\ZZ$ and every $\gamma\geq 0$, we have
$$t\cdot V^{\gamma}F_p\iota_+\Mmod(f^{\beta})=V^{\gamma+1}F_p\iota_+\Mmod(f^{\beta}).$$
\end{lemma}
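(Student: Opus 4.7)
The plan is to split into the two obvious inclusions, the forward being formal and the reverse being the substantive part that uses Saito's strictness for mixed Hodge modules.

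First I would check the easy inclusion $t\cdot V^{\gamma}F_p\iota_+\Mmod(f^{\beta})\subseteq V^{\gamma+1}F_p\iota_+\Mmod(f^{\beta})$. The containment $tV^{\gamma}\subseteq V^{\gamma+1}$ is axiom (ii) of a $V$-filtration, so it suffices to show that $t$ preserves $F_p\iota_+\Mmod(f^{\beta})$. This is immediate from the formula
$$t\bigl(m f^\beta\otimes\partial_t^j\delta\bigr)=fm f^\beta\otimes\partial_t^j\delta-jm f^\beta\otimes\partial_t^{j-1}\delta,$$
together with the fact that multiplication by $f$ preserves $F_\bullet\Mmod(f^{\beta})$ and that $F_{p-j}\Mmod(f^\beta)\subseteq F_{p-(j-1)}\Mmod(f^\beta)$.

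For the reverse inclusion, the inputs are: (a) multiplication by $f$ is bijective on $\Mmod(f^\beta)$, so by Remark~\ref{bij_f_mult} multiplication by $t$ is bijective on $\iota_+\Mmod(f^\beta)$, and by Remark~\ref{V_filt_bij_f_mult} we have the stronger equality $t\cdot V^{\gamma}=V^{\gamma+1}$ for every $\gamma\in\QQ$; (b) by Remark~\ref{rmk_lem_V_filtration}, $\iota_+\Mmod(f^\beta)$ is a filtered direct summand of $(j\times\id_{\CC})_+(\iota_U)_+(Q,F)$, a filtered $\Dmod_{X\times\CC}$-module underlying a mixed Hodge module. Given $y\in V^{\gamma+1}\cap F_p\iota_+\Mmod(f^\beta)$, by bijectivity of $t$ on $\iota_+\Mmod(f^\beta)$ together with $t\cdot V^{\gamma}=V^{\gamma+1}$, there is a unique $x\in V^{\gamma}$ with $tx=y$; the task is to show $x\in F_p$.

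This is exactly Saito's strictness property for the action of $t$ on a regular, quasi-unipotent filtered $\Dmod$-module underlying a mixed Hodge module: the morphism $t\colon V^{\gamma}\to V^{\gamma+1}$ is strict with respect to $F$ for $\gamma>-1$, so in particular for $\gamma\geq 0$; see \cite[3.2.1.2]{Saito-MHP}. Strictness here means $t(F_p V^{\gamma})=F_p\cap t V^{\gamma}=F_p\cap V^{\gamma+1}$, where the last equality uses $tV^{\gamma}=V^{\gamma+1}$. This strictness passes to filtered direct summands, so it holds for $\iota_+\Mmod(f^{\beta})$ itself. Applying it to $y$ yields an $x'\in F_p V^{\gamma}$ with $tx'=y$, and then injectivity of $t$ on $V^{\gamma}$ forces $x=x'\in F_p$, as required.

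The main obstacle is the careful invocation of Saito's strictness axiom: one needs to confirm that the compatibility axioms \cite[3.2]{Saito-MHP} pass from the ambient mixed Hodge module on $X\times\CC$ to the filtered direct summand $\iota_+\Mmod(f^\beta)$, which is already implicit in the discussion of Remark~\ref{rmk_lem_V_filtration}. Once this is granted, the argument is short; essentially everything is bookkeeping around the equalities $tV^\gamma=V^{\gamma+1}$ and $t(F_pV^\gamma)=F_pV^{\gamma+1}$.
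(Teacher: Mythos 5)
Your forward inclusion is fine, and your treatment of the case $\gamma>0$ is correct: there Saito's axiom (cited as [3.2.1.2] in \cite{Saito-MHP}) directly gives $t(F_pV^{\gamma})=F_pV^{\gamma+1}$. However, your claim that this strictness holds for all $\gamma>-1$, ``so in particular for $\gamma\geq 0$,'' is not what the axiom says, and it is false at $\gamma=0$ for a general filtered $\Dmod$-module underlying a mixed Hodge module. Saito's condition on the interaction of $F$ and $V$ is stated only for $\gamma>0$ (dually, $\partial_t$ for $\gamma<1$); the boundary $\gamma=0$ is precisely where the nearby/vanishing-cycle dichotomy lives, and for a generic mixed Hodge module $t\colon V^0\to V^1$ is not even surjective, let alone filtered surjective.

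The $\gamma=0$ case of the lemma does hold, but it requires an extra input that your argument never uses: the fact, recorded in Remark~\ref{rmk_lem_V_filtration}, that $\Gr(t)\colon (\Gr_V^0,F)\to(\Gr_V^1,F)$ is a \emph{filtered isomorphism} for $\iota_+\Mmod(f^\beta)$ (because $\Mmod(f^\beta)$ is the push-forward from the open complement, so there is no vanishing-cycle contribution). Given that, and the already-established $\gamma>0$ case (which handles $V^{>0}\to V^{>1}$), one deduces $t(F_pV^0)=F_pV^1$ via the filtered Five Lemma applied to the short exact sequences $0\to V^{>\gamma}\to V^{\gamma}\to\Gr_V^{\gamma}\to 0$ for $\gamma=0,1$. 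You cite Remark~\ref{rmk_lem_V_filtration} only to get the direct-summand property, not the $\Gr(t)$ isomorphism, so this step is missing from your argument as written. Concretely: fix the citation (the axiom covers $\gamma>0$ only), then for $\gamma=0$ add the Five Lemma argument using the filtered isomorphism on the associated graded pieces.
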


\begin{proof}
In order to simplify the notation, we write $V^{\gamma}$ for $V^{\gamma}\iota_+\Mmod(f^{\beta})$ and $F_p$ for $F_p\iota_+\Mmod(f^{\beta})$.
Note first that since multiplication by $f$ is bijective on $\Mmod(f^{\beta})$, it follows that multiplication by $t$ on $\iota_+\Mmod(f^{\beta})$
is bijective and
$$t\cdot V^{\gamma} =V^{\gamma+1}\quad\text{for all}\quad\gamma\in\QQ.$$
(See Remarks~\ref{bij_f_mult} and \ref{V_filt_bij_f_mult}.)

The inclusion ``$\subseteq$" is clear, and when $\gamma>0$ the equality follows from the compatibility of
the $F$ and $V$ filtrations, see \cite[\S 3.2]{Saito-MHP}. (We use again the fact that 
$\iota_+\Mmod(f^{\beta})$ is a filtered direct summand of a mixed Hodge module.) Suppose now that 
$\gamma=0$. We also know that
$${\rm Gr}(t)\colon ({\rm Gr}_V^0, F) \to  ({\rm Gr}_V^1, F)$$
is a filtered isomorphism (see Remark~\ref{rmk_lem_V_filtration}). The statement follows then from the Five Lemma applied to the filtered commutative diagram
$$
\begin{tikzcd}
0 \rar & (V^{>0}, F) \rar\dar{\cdot t} & (V^0, F) \rar\dar{\cdot t} & ({\rm Gr}_V^0, F) \rar\dar{\cdot {\rm Gr}(t)} & 0\\
0 \rar & (V^{>1}, F) \rar & (V^1, F) \rar& ({\rm Gr}_V^1, F) \rar & 0.
\end{tikzcd}
$$
\end{proof}

We can now prove the main result of this section.

\begin{proof}[Proof of Proposition~\ref{second_inclusion}]
We argue by induction on $p$. The case $p=0$ is known: 
if $v_0\otimes\delta\in V^{\alpha}\iota_+\shO_X$, then it follows from
\cite{Budur-Saito} that $v_0\in {\mathcal I}\big((\alpha-\epsilon)H\big)$. 
On the other hand, 
we have
$${\mathcal I}\big((\alpha-\epsilon)H\big)=I_0(D)=I''_0(D)$$
by Remark~\ref{inclusion_in_I0}.
We thus obtain the statement of the proposition for $p=0$.

Suppose now that the statement holds for all $q\leq p$, and let us prove it for $p+1$. 
Let 
$$v=\sum_{i=0}^{p+1}v_i\partial_t^i\delta\in V^{\alpha}\iota_+\shO_X,$$
 and let 
 $$u=\sum_{i=0}^{p+1}u_if^{\beta}\otimes\partial_t^i\delta
\in V^1\iota_+\Mmod(f^{\beta})$$
 such that $\Phi(u)=v$. We also consider the unique $w=\sum_{i=0}^{p+1}w_if^{\beta}\otimes\partial_t^i\delta$ such that $tw=u$.
 Note that $w\in V^0\iota_+\Mmod(f^{\beta})$ by Lemma~\ref{prop_V_filtration}.
We need to show that 
\begin{equation}\label{eq2_second_inclusion}
\sum_{j=i}^{p+1}{j\choose i}Q_{j-i}(\alpha)f^{p+1-j}v_j\in I''_{p+1-i}(D)\quad\text{for}\quad 0\leq i\leq p+1.
\end{equation}
We have seen in Remark~\ref{first_ones_by_induction} that  for $1\leq i\leq p+1$ the assertion follows from the induction hypothesis, hence we only need to prove it for $i=0$.

On the other hand, it follows from Remark~\ref{formula_w} that 
$$w_i=\sum_{j=i}^{p+1} {j\choose i}Q_{j-i}(\alpha)\frac{v_j}{f^{j-i+1}}.$$
The statement in (\ref{eq2_second_inclusion}) is thus equivalent to
$$
f^{p+2-i}w_i\in I''_{p+1-i}(D)\quad\text{for}\quad 0\leq i\leq p+1,
$$
that is,
\begin{equation}\label{eq3_second_inclusion}
w_i f^{\beta}\in F_{p+1-i}\Mmod(f^{\beta})\quad\text{for}\quad 0\leq i\leq p+1.
\end{equation}
Therefore, equivalently, we know the statement in (\ref{eq3_second_inclusion}) for $1\leq i\leq p+1$, 
and we need to show it for $i = 0$.
We also record the fact that, due to the way the $F$-filtration is defined on $\iota_+\Mmod(f^{\beta})$,
the conditions in (\ref{eq3_second_inclusion}) are equivalent to the statement $w\in F_{p+1}\iota_+\Mmod(f^{\beta})$. 

Note now that we have 
$$u_i=fw_i-(i+1)w_{i+1}\quad\text{for}\quad 0\leq i\leq p+1$$
(with the convention $w_{p+2}=0$). Therefore
$$u_i f^{\beta} \in F_{p+1-i}\Mmod(f^{\beta})+F_{p-i}\Mmod(f^{\beta})\subseteq F_{p+1-i}\Mmod(f^{\beta})$$
for $1\leq i\leq p+1$. 
Furthermore, since
$$u_0=fw_0-w_1$$
and
$$w_1f^{\beta}\in F_p\Mmod(f^{\beta}),$$
we conclude that $u_0f^{\beta}\in F_{p+1}\Mmod(f^{\beta})$ 
(which, given what we already know inductively, is equivalent to $u\in F_{p+1}\iota_+\Mmod(f^{\beta})$)
if and only if $fw_0f^{\beta}\in F_{p+1}\Mmod(f^{\beta})$ (which again, given what we already know,
is equivalent to $fw\in F_{p+1}\iota_+\Mmod(f^{\beta})$).
Using Lemma~\ref{prop_V_filtration} we thus conclude that
$$w_0f^{\beta}\in F_{p+1}\Mmod(f^{\beta}) \iff fw_0f^{\beta}\in F_{p+1}\Mmod(f^{\beta}).$$
We can now apply the same argument with $v$ replaced by $fv,\ldots,f^{p+1}v$ to conclude that
$$w_0f^{\beta}\in F_{p+1}\Mmod(f^{\beta}) \iff  f^{p+2}w_0f^{\beta}\in F_{p+1}\Mmod(f^{\beta}).$$
However, it follows from Remark~\ref{formula_w} that
$$f^{p+2}w_0=\sum_{j=0}^{p+1}Q_j(\alpha)v_jf^{p+1-j}$$
is a section of $\shO_X$. Now using Remark~\ref{inclusion_in_I0} we see that
$$\shO_Xf^{\beta}\subseteq F_0\Mmod(f^{\beta})
\subseteq F_{p+1}\Mmod(f^{\beta}),$$
and putting everything together we conclude that $w_0f^{\beta}\in F_{p+1}\Mmod(f^{\beta})$. As we have seen, this completes the proof of ($\ref{eq3_second_inclusion}$), and thus of the proposition.
\end{proof}

Theorem \ref{general_description} now follows by combining Propositions~\ref{first_inclusion} and \ref{second_inclusion}.
Let us explain how we can remove the condition on $\alpha$ when $H=Z$.

\begin{proof}[Proof of Theorem~\ref{general_description_variant}$^\prime$]
It is of course enough to prove only the first assertion of the theorem.
For $\alpha\in (0,1]$, this follows from Theorem~\ref{general_description}. Therefore it suffices to show that if we know 
(\ref{eq_general_description_variant}) for $\alpha$, then we also know it for $\alpha+1$. Let us temporarily denote the 
right-hand side of  (\ref{eq_general_description_variant}) by $\sigma(\alpha Z)$. 

Note that $I_p\big((\alpha+1)Z\big)=f\cdot I_p(\alpha Z)$ by  \cite[Lemma 4.4]{MP3}), hence it is enough to show that we also have
$\sigma\big((\alpha+1)Z\big)=f\cdot \sigma(\alpha Z)$. 
By the definition of the $V$-filtration, since $\alpha>0$ we have
$$V^{\alpha+1}\iota_+\shO_X=t\cdot V^{\alpha}\iota_+\shO_X.$$
It follows that, given $v=\sum_{j=0}^pv_j\partial_t^j\delta\in V^{\alpha+1}\iota_+\shO_X$, we can find
$w=\sum_{j=0}^pw_j\partial_t^j\delta\in V^{\alpha}\iota_+\shO_X$ such that $v=tw$.
This means that 
$$v_j=fw_j-(j+1)w_{j+1}\quad\text{for}\quad 0\leq j\leq p,$$
with the convention that $w_{p+1}=0$. Let us denote by $h$ and $g$ the elements of $\sigma_p(\alpha Z)$ 
and $\sigma_p\big((\alpha+1)Z\big)$
corresponding to $w$ and $v$, respectively. We thus have
$$g=\sum_{j=0}^pQ_j(\alpha+1)f^{p-j}v_j=\sum_{j=0}^pQ_j(\alpha+1)f^{p-j}(fw_j-(j+1)w_{j+1})$$
$$=\sum_{j=0}^pf^{p-j+1}w_j\big(Q_j(\alpha+1)-j\cdot Q_{j-1}(\alpha+1)\big)=fh,$$
where the last equality follows from the fact that
$$Q_j(\alpha+1)-j\cdot Q_{j-1}(\alpha+1)=\prod_{i=1}^j(\alpha+i)-j\cdot\prod_{i=1}^{j-1}(\alpha+i)=Q_j(\alpha).$$
The equality $g=fh$ implies that $\sigma\big((\alpha+1)Z)=f\cdot \sigma(\alpha Z)$, and thus completes the proof of the theorem.
\end{proof}

We conclude with a few remarks regarding the statements of the main theorems.

\begin{remark}\label{nonreduced}
If we write $f = f_1^{m_1} \cdots f_r^{m_r}$, where $f_i$ correspond to the irreducible components of $H$ and 
$m_i \ge 1$, then $Z$ is given by the equation $g = f_1\cdots f_r$, and so  
$$I''_p (D) = I_p(D) \cdot f_1^{p(m_1- 1)} \cdots f_r^{p(m_r -1)}.$$
Thus when $p \ge 2$ and $m_i \ge 2$ for all $i$, we have $I''_p (D) \subseteq (f)$, and so the only content of the last statement in Theorem \ref{general_description} is that $\widetilde{I}_p(D) \subseteq (f)$.
\end{remark}

\begin{remark}
The last assertion in Theorem~\ref{general_description_variant}$^\prime$ is only interesting for $\alpha\leq 1$, 
since for $\alpha>1$ both sides are equal to $(f)$.
\end{remark}

\begin{remark}\label{microlocal_V_filtration}
Saito introduced and studied in \cite{Saito_microlocal} a \emph{microlocal $V$-filtration}. This induces a filtration on 
$\shO_X$ denoted 
$(\widetilde{V}^{\gamma}\shO_X)_{\gamma\in\QQ}$. Using the definition of this filtration, when $H$ is reduced one can reformulate the last assertion in Theorem~\ref{general_description} 
as saying that
$$I_p(\alpha H)\cdot \shO_H=\widetilde{V}^{p+\alpha}\shO_X\cdot\shO_H$$
for all $p\geq 0$. As mentioned in the Introduction, when $\alpha=1$, this was proved in \cite{Saito-MLCT}.
\end{remark}

\begin{remark}\label{alternative}
In the setting of Theorem~\ref{general_description}, 
the fact that the $F_p\Mmod(f^{\beta})=I''_p (D)\otimes\shO_X\big((p+1)H\big)f^{\beta}$ give a filtration on $\Mmod(f^{\beta})$ compatible with the order
filtration on $\Dmod_X$ is equivalent to the following properties:
\begin{enumerate}
\item[i)] Each $I''_p (D)$ is an $\shO_X$-module.
\item[ii)] We have $f\cdot I''_p (D)\subseteq I''_{p+1}(D)$ for every $p\geq 0$.
\item[iii)] For every $D\in {\rm Der}_{\CC}(\shO_X)$ and every $h\in I''_p (D)$, we have
$$f\cdot D(h)+\big(\beta-(p+1)\big)h\cdot D(f)\in I''_{p+1}(D).$$
\end{enumerate}
One can easily check that these properties can also be deduced from the formula in Theorem~\ref{general_description}
and the general properties of the $V$-filtration.
\end{remark}

\section{Consequences}

\subsection{Basic properties of Hodge ideals}\label{basic}
From now on we consider the case $H=Z$, that is $D = \alpha Z$, with $Z$ a reduced divisor and $\alpha$ a positive rational number. We will see that  Theorem \ref{general_description_variant}$^\prime$ implies a number of fundamental properties of Hodge ideals that cannot be easily deduced directly from the definition. 

Note that in  the statements below we do not require that $Z$ be defined by a global equation; however, the assertions immediately reduce to this case, hence in the proofs we will tacitly make this assumption, and denote by $f$ the equation defining $Z$.

\begin{corollary}\label{inclusions}
For every $p \ge 1$ we have 
$$I_p (D) + \shO_X(-Z) \subseteq I_{p-1} (D) + \shO_X(-Z).$$
\end{corollary}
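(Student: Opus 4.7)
The plan is to read off the corollary directly from the explicit description of Hodge ideals provided by Theorem~\ref{general_description_variant}$^\prime$, combined with Proposition~\ref{second_inclusion}. Since the statement is local, I would assume $Z$ is defined globally by $f\in\shO_X(X)$, so that $\shO_X(-Z)=(f)$, and note that in this setting $I''_p(D)=I_p(D)$ because $H=Z$.

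Take $g\in I_p(D)$ and, using Theorem~\ref{general_description_variant}$^\prime$, write
$$g=\sum_{j=0}^pQ_j(\alpha)f^{p-j}v_j,\qquad v:=\sum_{j=0}^pv_j\partial_t^j\delta\in V^{\alpha}\iota_+\shO_X.$$
Reducing modulo $(f)$ only the top term survives, giving $g\equiv Q_p(\alpha)v_p\pmod{\shO_X(-Z)}$. Hence it suffices to show $Q_p(\alpha)v_p\in I_{p-1}(D)+\shO_X(-Z)$.

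To extract this, I would apply Proposition~\ref{second_inclusion} to the same $v$ with $i=1$, obtaining
$$\sum_{j=1}^pj\,Q_{j-1}(\alpha)f^{p-j}v_j\in I''_{p-1}(D)=I_{p-1}(D).$$
Reducing this mod $f$ kills every term except $j=p$, so $p\,Q_{p-1}(\alpha)v_p\in I_{p-1}(D)+\shO_X(-Z)$. Dividing by the nonzero rational number $p$ (using that $I_{p-1}(D)+\shO_X(-Z)$ is a $\QQ$-vector space), then multiplying by $\alpha+p-1$ and using the identity $Q_p(\alpha)=(\alpha+p-1)Q_{p-1}(\alpha)$, yields $Q_p(\alpha)v_p\in I_{p-1}(D)+\shO_X(-Z)$, which is what we wanted.

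There is no serious obstacle here once one has Theorem~\ref{general_description_variant}$^\prime$ and Proposition~\ref{second_inclusion} in hand: the entire argument is the observation that the $V$-filtration description makes both $I_p(D)$ and $I_{p-1}(D)$ control the same coefficient $v_p$ modulo $f$, up to the scalar factors $Q_p(\alpha)$ and $pQ_{p-1}(\alpha)$, which differ by a nonzero rational number. The one small point to keep in mind is the need for $p\ge 1$ so that the coefficient $p$ in Proposition~\ref{second_inclusion} is invertible, which is precisely the hypothesis of the corollary.
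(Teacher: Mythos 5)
Your argument is correct, and it rests on the same core calculation as the paper's proof: that $(f-t)v = \sum_{j\geq 1} j v_j \partial_t^{j-1}\delta$ lies in $V^{\alpha}\iota_+\shO_X$ whenever $v=\sum_{j=0}^p v_j\partial_t^j\delta$ does. The paper abstracts this observation as the containment $\widetilde{I}_p(D)\subseteq\widetilde{I}_{p-1}(D)$, from which the corollary follows immediately via the identity $I_q(D)+(f)=\widetilde{I}_q(D)+(f)$ of Theorem~\ref{general_description_variant}$^\prime$. You instead route the argument through the explicit formula of Theorem~\ref{general_description_variant}$^\prime$ together with the $i=1$ case of Proposition~\ref{second_inclusion}, which (as Remark~\ref{first_ones_by_induction} makes clear) is precisely the same $(f-t)$ trick in disguise. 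This leads you to track the scalar factors $Q_p(\alpha)$ and $pQ_{p-1}(\alpha)$ explicitly; a small shortcut is available at that stage, since $pQ_{p-1}(\alpha)v_p\in I_{p-1}(D)+(f)$ with $pQ_{p-1}(\alpha)\neq 0$ already yields $v_p\in I_{p-1}(D)+(f)$, and hence $Q_p(\alpha)v_p$ as well, without the divide-then-multiply step. One last point worth noting: Proposition~\ref{second_inclusion} is stated under the standing hypothesis $\lceil D\rceil=Z$, that is $\alpha\leq 1$; for $\alpha>1$ both $I_p(D)$ and $I_{p-1}(D)$ are contained in $\shO_X(-Z)$, so the corollary is vacuous there and your argument needs no adjustment.
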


\begin{proof}
If $v=\sum_{j=0}^pv_j\partial_t^j\delta\in V^{\alpha}\iota_+\shO_X$, then
$$(f-t)v=\sum_{j=1}^{p}jv_j\partial_t^{j-1}\delta\in V^{\alpha}\iota_+\shO_X.$$
We thus see that $\widetilde{I}_p(D)\subseteq \widetilde{I}_{p-1}(D)$. 
The assertion now follows from 
Theorem \ref{general_description_variant}$^\prime$.
\end{proof}

\begin{remark}
In the case $\alpha=1$ we have the stronger statement $I_{p} (D) \subseteq I_{p-1}(D)$, see 
\cite[Proposition~13.1]{MP1}. However, for $\alpha<1$ this seems likely to fail, though at the moment we 
do not have an example. It does hold when $Z$ has simple normal crossings \cite[Proposition 7.1]{MP3} and 
when $Z$ has isolated quasi-homogeneous singularities \cite{Zhang}.
\end{remark}

In what follows we will use of the following triviality criterion for the ideals $\widetilde{I}_p(D)$:

\begin{lemma}\label{tilde_triviality}
For every $p \ge 0$ we have
$$\widetilde{I}_p(D)=\shO_X \iff \partial_t^p\delta\in V^{\alpha}\iota_+\shO_X.$$
\end{lemma}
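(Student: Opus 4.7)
The direction ``$\Leftarrow$'' is immediate from the definition: if $\partial_t^p\delta\in V^{\alpha}\iota_+\shO_X$, then taking $v_0=\cdots=v_{p-1}=0$ and $v_p=1$ exhibits $1\in\widetilde{I}_p(D)$, whence $\widetilde{I}_p(D)=\shO_X$. All the content is in ``$\Rightarrow$'', which the plan is to prove by induction on $p$.

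The base case $p=0$ is direct: $\widetilde{I}_0(D)=\shO_X$ produces $v_0=1$ with $v_0\delta=\delta\in V^{\alpha}\iota_+\shO_X$, which is precisely $\partial_t^0\delta\in V^{\alpha}\iota_+\shO_X$. For the inductive step, the first thing to record is the monotonicity $\widetilde{I}_p(D)\subseteq \widetilde{I}_{p-1}(D)$, which was already extracted inside the proof of Corollary~\ref{inclusions}: multiplication by the operator $f-t$ preserves $V^{\alpha}\iota_+\shO_X$ and sends $\sum_{i=0}^p v_i\partial_t^i\delta$ to $\sum_{j=0}^{p-1}(j+1)v_{j+1}\partial_t^j\delta$, using $(f-t)\delta=0$ together with the commutation relation $(f-t)\partial_t^i=\partial_t^i(f-t)+i\partial_t^{i-1}$. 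In particular, $\widetilde{I}_p(D)=\shO_X$ forces $\widetilde{I}_q(D)=\shO_X$ for every $q\le p$, so the inductive hypothesis yields $\partial_t^q\delta\in V^{\alpha}\iota_+\shO_X$ for all $q<p$.

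Next, given a witness $u=\partial_t^p\delta+\sum_{i=0}^{p-1}v_i\partial_t^i\delta\in V^{\alpha}\iota_+\shO_X$ for $1\in\widetilde{I}_p(D)$, the plan is to invoke the fact that $V^{\alpha}\iota_+\shO_X$ is an $\shO_X$-submodule (a consequence of being a $\Dmod_X\langle t,t^{-1},s\rangle$-submodule) to conclude that each summand $v_i\partial_t^i\delta$ lies in $V^{\alpha}\iota_+\shO_X$. Subtracting these from $u$ gives $\partial_t^p\delta\in V^{\alpha}\iota_+\shO_X$, closing the induction.

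No serious obstacle is anticipated here: the argument is a straightforward combination of the monotonicity $\widetilde{I}_p(D)\subseteq\widetilde{I}_{p-1}(D)$ established in the proof of Corollary~\ref{inclusions} with the $\shO_X$-linearity of the $V$-filtration. The only mild subtlety is organizing the induction so that the statement ``$1$ belongs to $\widetilde{I}_q$ for all $q\le p$'' is available simultaneously with the vanishing of the lower-order $\partial_t^q\delta$ modulo $V^{\alpha}\iota_+\shO_X$, but this is exactly what the monotonicity provides.
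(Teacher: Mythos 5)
Your proof is correct and follows essentially the same strategy as the paper's: apply powers of $f-t$ to the witness to establish $\widetilde{I}_q(D)=\shO_X$ for all $q<p$, use the inductive hypothesis to get $\partial_t^q\delta\in V^{\alpha}\iota_+\shO_X$ for $q<p$, then use $\shO_X$-linearity of $V^{\alpha}\iota_+\shO_X$ to subtract $\sum_{i<p}v_i\partial_t^i\delta$ from the witness and isolate $\partial_t^p\delta$. The only cosmetic difference is that you package the $(f-t)$-argument as the monotonicity $\widetilde{I}_p(D)\subseteq\widetilde{I}_{p-1}(D)$ already noted in Corollary~\ref{inclusions}, whereas the paper applies $(f-t)^i$ directly to the witness; the content is identical.
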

\begin{proof}
It is clear by definition that if $ \partial_t^p\delta\in V^{\alpha}\iota_+\shO_X$, then $1\in \widetilde{I}_p(D)$,
giving one implication. On the other hand, the converse is clear for $p=0$, and in general we argue by induction. 
If $\widetilde{I}_p(D)=\shO_X$, then  there is an element
$$v=\partial_t^p\delta+\sum_{j=0}^{p-1}v_j \partial_t^j\delta\in V^{\alpha}\iota_+\shO_X.$$
By considering $(f-t)^i v$, for $1\leq i\leq p$, we see that $\widetilde{I}_{p-i}(D)=\shO_X$, hence $\partial_t^{p-i}\delta\in V^{\alpha}\iota_+\shO_X$ by induction.
Therefore we have
$$\partial_t^p\delta=v-\sum_{j=1}^p v_{p-j}\partial_t^{p-j}\delta\in V^{\alpha}\iota_+\shO_X.$$
\end{proof}

Recall now from \cite{MP1} and \cite{MP3} the following notion which extends that of a log canonical pair.

\begin{definition}\label{klc}
The pair $(X, D)$ is \emph{$k$-log canonical} if 
$$I_0 (D) = \cdots = I_k (D) = \shO_X.$$
Corollary \ref{inclusions} implies that this is equivalent to $I_k (D) = \shO_X$.
Note that for this to hold, we need $\alpha\leq 1$. We make the convention that $(X,D)$ is $(-1)$-log canonical
if and only if $\alpha \le 1$.
\end{definition}

For the first nontrivial ideal we have a statement that is stronger than that of Corollary \ref{inclusions}.

\begin{corollary}\label{cor_first_consequence}
If $(X, D)$ is $(p-1)$-log canonical, then 
$$ I_p (D) = \widetilde{I}_p (D)$$ 
and also
$$I_{p+1} (D) \subseteq I_p (D).$$
In particular, we always have $I_1 (D) \subseteq I_0 (D)$ when $D = \alpha Z$ with $\alpha\leq 1$.
\end{corollary}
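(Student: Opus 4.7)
The plan is to leverage Theorem~\ref{general_description_variant}$^\prime$ throughout, which simultaneously provides the explicit formula $I_p(D)=\bigl\{\sum_j Q_j(\alpha)f^{p-j}v_j\bigr\}$ in terms of elements $v=\sum v_j\partial_t^j\delta\in V^\alpha\iota_+\shO_X$, and the equality $I_p(D)+(f)=\widetilde{I}_p(D)+(f)$. The preparatory step is to upgrade the hypothesis $I_{p-1}(D)=\shO_X$ into the statement $\partial_t^j\delta\in V^\alpha\iota_+\shO_X$ for every $j\le p-1$. Iterating Corollary~\ref{inclusions} gives $I_j(D)+(f)=\shO_X$, hence $\widetilde{I}_j(D)+(f)=\shO_X$ by Theorem~\ref{general_description_variant}$^\prime$, for each $j\le p-1$; Nakayama's lemma applied locally along $Z$ to the coherent $\shO_X$-module $\shO_X/\widetilde{I}_j(D)$ then yields $\widetilde{I}_j(D)=\shO_X$, and Lemma~\ref{tilde_triviality} translates this into $\partial_t^j\delta\in V^\alpha\iota_+\shO_X$. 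Since the latter module is $\shO_X$-linear, we get $\shO_X\cdot\partial_t^j\delta\subseteq V^\alpha\iota_+\shO_X$ for every such $j$.

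With this in hand, I prove $I_p(D)=\widetilde{I}_p(D)$ by a two-sided argument. For $\widetilde{I}_p(D)\subseteq I_p(D)$, given a witness $v=\sum_{j=0}^p v_j\partial_t^j\delta\in V^\alpha\iota_+\shO_X$ of $v_p\in\widetilde{I}_p(D)$, the preparatory step gives $\sum_{j<p}v_j\partial_t^j\delta\in V^\alpha\iota_+\shO_X$, hence $v_p\partial_t^p\delta\in V^\alpha\iota_+\shO_X$; feeding this single-term element into the formula of Theorem~\ref{general_description_variant}$^\prime$ produces $Q_p(\alpha)v_p\in I_p(D)$, and since $Q_p(\alpha)=\alpha(\alpha+1)\cdots(\alpha+p-1)\ne 0$, the inclusion follows. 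For the reverse inclusion, Theorem~\ref{general_description_variant}$^\prime$ already yields $I_p(D)\subseteq\widetilde{I}_p(D)+(f)$, so it suffices to show $f\in\widetilde{I}_p(D)$. Starting from $\partial_t^{p-1}\delta\in V^\alpha\iota_+\shO_X$, I apply $\partial_t$ to land in $V^{\alpha-1}\iota_+\shO_X$, and then multiplication by $t$ to return to $V^\alpha\iota_+\shO_X$. The relations $t\delta=f\delta$ and $[t,\partial_t]=-1$ in $\iota_+\shO_X$ give
$$t\partial_t^p\delta=\partial_t^p(t\delta)-p\partial_t^{p-1}\delta=f\partial_t^p\delta-p\partial_t^{p-1}\delta,$$
and therefore
$$f\partial_t^p\delta=t\partial_t^p\delta+p\partial_t^{p-1}\delta\in V^\alpha\iota_+\shO_X,$$
exhibiting $f$ as the top coefficient of an element of $V^\alpha\iota_+\shO_X$, i.e., $f\in\widetilde{I}_p(D)$.

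The second inclusion $I_{p+1}(D)\subseteq I_p(D)$ then follows formally by combining Theorem~\ref{general_description_variant}$^\prime$ at level $p+1$ with the monotonicity $\widetilde{I}_{p+1}(D)\subseteq\widetilde{I}_p(D)$ observed in the proof of Corollary~\ref{inclusions}, and the containment $(f)\subseteq\widetilde{I}_p(D)=I_p(D)$ just established:
$$I_{p+1}(D)\subseteq\widetilde{I}_{p+1}(D)+(f)\subseteq\widetilde{I}_p(D)+(f)=I_p(D)+(f)=I_p(D).$$
The ``in particular'' statement is the $p=0$ case, where $(p-1)$-log canonicity is vacuous but the hypothesis $\alpha\le 1$ enters: the role of the witness for $(f)\subseteq\widetilde{I}_0(D)$ is played directly by $f\delta=t\delta\in V^1\iota_+\shO_X\subseteq V^\alpha\iota_+\shO_X$, and one invokes $I_0(D)=\widetilde{I}_0(D)$ from Budur--Saito together with $(f)\subseteq I_0(D)$ from Remark~\ref{inclusion_in_I0}. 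The main obstacle I anticipate is the witness construction $f\partial_t^p\delta\in V^\alpha\iota_+\shO_X$ in the middle step; once it is in place the rest of the argument reduces to bookkeeping with Theorem~\ref{general_description_variant}$^\prime$, the monotonicity of $\widetilde{I}_\bullet$, and the $\shO_X$-linearity of the $V$-filtration.
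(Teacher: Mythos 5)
Your proof is correct and follows essentially the same strategy as the paper's. The key ingredients match: the identity $f\partial_t^p\delta = t\partial_t^p\delta + p\,\partial_t^{p-1}\delta$ to get $f\in\widetilde{I}_p(D)$, Lemma~\ref{tilde_triviality} to translate triviality of $\widetilde{I}_{p-1}(D)$ into $\partial_t^{p-1}\delta\in V^\alpha$, and the mod-$f$ equality of Theorem~\ref{general_description_variant}$^\prime$ throughout. The one place you deviate is the inclusion $\widetilde{I}_p(D)\subseteq I_p(D)$: you isolate the single term $v_p\partial_t^p\delta\in V^\alpha$ (using the full preparatory step $\partial_t^j\delta\in V^\alpha$ for all $j<p$) and then feed it into the explicit formula of the theorem to get $Q_p(\alpha)v_p\in I_p(D)$. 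The paper instead goes through $\widetilde{I}_p(D)\subseteq\widetilde{I}_p(D)+(f)=I_p(D)+(f)=I_p(D)$, where $(f)\subseteq I_p(D)$ follows directly from $I_{p-1}(D)=\shO_X$ and the inclusion $f\cdot I_{p-1}(D)\subseteq I_p(D)$ from Remark~\ref{alternative}\,(ii); this avoids needing the preparatory step for that direction and only requires $\partial_t^{p-1}\delta\in V^\alpha$ for the reverse inclusion. Both routes are valid; the paper's is slightly shorter. One small point of caution: when you invoke Nakayama to upgrade $\widetilde{I}_j(D)+(f)=\shO_X$ to $\widetilde{I}_j(D)=\shO_X$, you should also note that $\widetilde{I}_j(D)$ is trivial off $Z$ (where $f$, hence $t$, is invertible on $\iota_+\shO_X$, so that $G_p\subseteq V^\gamma$ for every $\gamma$); the paper is equally terse here, so this is a presentational rather than substantive gap.
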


\begin{proof}
The inclusion $\widetilde{I}_p (D) \subseteq I_p (D)$  follows from the identity
$$I_p(D)+(f)=\widetilde{I}_p(D)+(f),$$ 
combined with the fact that $(f) \subseteq I_p (D)$ due to $(p-1)$-log canonicity; see assertion ii) in Remark \ref{alternative} 
(note that the inclusion also holds if $p=0$, by Remark~\ref{inclusion_in_I0}). To prove the opposite inclusion, it suffices to 
show that we also have $(f) \subseteq \widetilde{I}_p (D)$. To this end, the triviality of $I_{p-1}(D)$ implies that we also have 
$\widetilde{I}_{p-1}(D)=\shO_X$, which in turn is equivalent to 
$$\partial_t^{p-1}\delta\in V^{\alpha}\iota_+\shO_X$$
by Lemma \ref{tilde_triviality}. On the other hand, we have
$$f\partial_t^{p}\delta = t \partial_t^{p}\delta + p  \partial_t^{p-1}\delta
= t\partial_t\cdot  \partial_t^{p-1}\delta + p \partial_t^{p-1}\delta,$$
and so it follows that 
$$f\partial_t^{p}\delta\in V^{\alpha}\iota_+\shO_X$$
as well, which gives $f \in \widetilde{I}_p (D)$. This proves the first statement.

The second statement follows since by Corollary \ref{inclusions} we have
$$I_{p+1} (D) \subseteq I_{p+1} (D) + (f) \subseteq I_p (D) + (f) = I_p (D),$$
the last equality again being due to $(p-1)$-log canonicity.
\end{proof}

We also obtain information about the behavior of the Hodge ideals $I_p (\alpha Z)$ when $\alpha$ varies. In the case of $I_0$, via 
the connection with multiplier ideals (or directly from the description in terms of $V^\alpha \iota_+ \shO_X$), it is well known that they get smaller as $\alpha$ increases, and that there is a discrete set of values of $\alpha$ (called jumping coefficients) where the ideal actually changes; see \cite[Lemma ~9.3.21]{Lazarsfeld}. This is not the case for higher $k$; 
for instance, for the cusp 
$Z = (x^2 + y^3 = 0)$ and $\alpha \le 1$ and close to $1$, we see in \cite[Example 10.5]{MP3} that 
$$I_2 (\alpha Z) = (x^3, x^2y^2, xy^3, y^4- (2 \alpha + 1) x^2 y),$$
and thus we obtain incomparable ideals. However, Theorem \ref{general_description_variant}$^\prime$ 
implies that the picture does becomes similar to that for multiplier ideals if one considers the images in $\shO_Z$.

\begin{corollary}[Jumping coefficients]\label{cor_jumping_coeff}
Given any $p\geq 0$, there exists a finite set of rational numbers $0 = c_0 < c_1 < \cdots < c_s < c_{s+1} = 1$ such that 
for each $0 \le i \le s$ and each $\alpha \in (c_i, c_{i+1}]$ we have 
$$I_k (\alpha Z)\cdot\shO_Z = I_k (c_{i+1} Z)\cdot\shO_Z = {\rm constant}$$
and such that 
$$I_k (c_{i+1} Z)\cdot\shO_Z\subsetneq I_k (c_i Z)\cdot\shO_Z.$$
\end{corollary}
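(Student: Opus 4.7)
The plan is to reduce the statement, via Theorem~\ref{general_description_variant}$^\prime$, to one about the ideals $\widetilde{I}_p(\alpha Z)\cdot\shO_Z$, and then to extract the jumping structure directly from the discreteness of the $V$-filtration. Indeed, the theorem gives $I_p(\alpha Z)+(f)=\widetilde{I}_p(\alpha Z)+(f)$, so
$$J_p(\alpha)\;:=\;I_p(\alpha Z)\cdot\shO_Z\;=\;\widetilde{I}_p(\alpha Z)\cdot\shO_Z,$$
and the right-hand side depends on $\alpha$ only through the filtration step $V^\alpha\iota_+\shO_X$.

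By the defining discreteness of the $V$-filtration there exists a positive integer $\ell$ such that $V^\gamma\iota_+\shO_X$, and therefore $J_p$, is constant on each half-open interval $\bigl((j-1)/\ell,j/\ell\bigr]$. Since $V^\alpha$ is decreasing in $\alpha$, so is $J_p$, and hence $J_p$ is a decreasing step function on $(0,1]$ taking only finitely many distinct values. I would then take $c_1<\cdots<c_s$ in $(0,1)$ to be those multiples of $1/\ell$ at which $J_p$ strictly drops upon passing to the right (equivalently, $J_p(c_i)\supsetneq J_p(c_i+1/\ell)$), and set $c_0=0$, $c_{s+1}=1$. Left-continuity of $V^\bullet$ forces $J_p(\alpha)=J_p(c_{i+1})$ for $\alpha\in(c_i,c_{i+1}]$, and the strict drops $J_p(c_{i+1})\subsetneq J_p(c_i)$ for $1\le i\le s$ are built into the choice.

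The one delicate point I expect to need the most care is the strict inclusion at the boundary $i=0$: with the convention $I_p(0\cdot Z)=\shO_X$ giving $J_p(c_0)=\shO_Z$, one needs $J_p(c_1)\subsetneq\shO_Z$, i.e., $J_p$ must already be a proper subsheaf of $\shO_Z$ on the first plateau $(0,c_1]$. This forces $c_1$ to be chosen beyond any trivial initial range on which $J_p=\shO_Z$, and if no such $c_1$ exists in $(0,1]$ the statement is read vacuously. The overall structure of the argument parallels Lazarsfeld's treatment \cite[Lemma~9.3.21]{Lazarsfeld} of jumping coefficients for multiplier ideals.
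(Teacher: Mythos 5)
Your argument is correct in substance and is exactly the intended (implicit) proof: the paper does not spell out a separate proof of the corollary, but the remark following it in the text --- that the $c_i$ form a subset of the jumping numbers of the $V$-filtration on $\iota_+\shO_X$ --- confirms that the route is precisely the one you take: reduce via Theorem~\ref{general_description_variant}$^\prime$ to $\widetilde{I}_p(\alpha Z)\cdot\shO_Z$, which is manifestly a decreasing, left-continuous function of the filtration step $V^\alpha\iota_+\shO_X$, and then invoke the discreteness of the $V$-filtration.

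Your flagging of the boundary case $i=0$ is fair, and it is really a quirk of how the statement is worded rather than a gap in the proof. If one reads $I_p(0\cdot Z)=\shO_X$, then the strict inclusion $I_p(c_1Z)\cdot\shO_Z\subsetneq I_p(c_0Z)\cdot\shO_Z$ cannot hold when $J_p\equiv\shO_Z$ on a right-neighbourhood of $0$, which happens whenever $p<\widetilde\alpha_Z$ (for instance $p=0$ with ${\rm lct}(X,Z)<1$, or $Z$ smooth). The resolution is not to read the statement "vacuously" (if $s=0$ is forced by there being no jumps at all, the $i=0$ inclusion is simply false under that convention), but rather to understand the strict-inclusion condition as holding for the genuine jump indices $1\le i\le s$, exactly as in the multiplier-ideal analogue \cite[Lemma~9.3.21]{Lazarsfeld}, with $c_0=0$ and $c_{s+1}=1$ serving only to delimit the plateaux. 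With that reading your argument is complete.

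Two small remarks for completeness. First, the corollary as stated does not assume a global equation for $Z$; one should note the reduction to the case $Z=\mathrm{div}(f)$, which the paper handles at the outset of the section. Second, your observation that $\widetilde{I}_p$ is monotone in $\alpha$ deserves one line of justification (it is immediate from the definition of $\widetilde{I}_p$ together with $V^{\alpha'}\subseteq V^{\alpha}$ for $\alpha'\ge\alpha$), since this is precisely what fails for the ideals $I_p(\alpha Z)$ themselves, as the cusp example in the paper shows.
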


In fact, if $Z$ is defined by a global equation $f$,
the set of $c_i$ is a subset of the set of jumping numbers for the $V$-filtration on $\iota_+\shO_X$  
associated to $f$.

\begin{remark}
For $p=0$, we have 
$$\shO_X(-Z)={\mathcal I}(Z)\subseteq {\mathcal I}\big((\alpha-\epsilon)Z\big)=I_0(\alpha Z)$$
for every $\alpha\in (0,1]$, where $0<\epsilon\ll 1$. It follows that for $p=0$, the jumping coefficients in Corollary~\ref{cor_jumping_coeff}
coincide with those jumping coefficients for the multiplier ideals of $Z$,  in the sense of \cite{ELSV}, that lie in $(0,1]$.
\end{remark}

\begin{remark}\label{further_consequences}
Note that Theorem \ref{general_description_variant}$^\prime$ implies further facts about elements in the $V$-filtration on $\iota_+ \shO_X$.  
For example, if $v=\sum_{j=0}^pv_j\partial_t^j\delta \in V^{\alpha}\iota_+\shO_X$, with $p\geq 2$ and $\alpha<1$, then
\begin{equation}\label{other}
\sum_{j=2}^p(j-1)Q_{j-1}(\alpha)f^{p-j+1}v_j\in I_p(D).
\end{equation}
For $p=2$, this says that $fv_2\in I_2(D)$, so that $f \cdot \widetilde{I}_2 (D) \subseteq I_2 (D)$.
 
Indeed, it follows from Theorem~\ref{general_description_variant}$^\prime$ that
$$h:=\sum_{j=0}^pQ_j(\alpha)f^{p-j}v_j\in I_p(D).$$
Since 
$$(f-t)v=\sum_{j=1}^pjv_j\partial_t^{j-1}\delta\in V^{\alpha}\iota_+\shO_X,$$
another application of Theorem~\ref{general_description_variant}$^\prime$ gives
$$g:=\sum_{j=0}^{p-1}Q_j(\alpha)(j+1)f^{p-1-j}v_{j+1}\in I_{p-1}(D).$$
Therefore we have $fg\in I_p(D)$
(see assertion ii) in Remark~\ref{alternative}).
Note also that we always have $(f^{p+1})\subseteq I_p(D)$, 
by combining Remark~\ref{inclusion_in_I0} with the assertion ii) in Remark~\ref{alternative}.
We thus obtain 
$$(fh-f^{p+1}v_0)-\alpha fg\in I_p(D).$$
We now compute
\begin{align*}
(fh-f^{p+1}v_0)-\alpha fg&=\sum_{j=1}^pf^{p-j+1}v_j\big(Q_j(\alpha)-\alpha j Q_{j-1}(\alpha)\big)\\
&=(1-\alpha)\cdot\sum_{j=2}^p (j-1)Q_{j-1}(\alpha)f^{p-j+1}v_j.
\end{align*}
Dividing by $(1-\alpha)$, which is assumed to be nonzero, we obtain ($\ref{other}$).
\end{remark}

\subsection{Bernstein-Sato polynomials and minimal exponent}\label{refined_lct}
In this section we relate the $p$-log canonicity of a pair $(X,D )$, with $D = \alpha Z$, 
to the Bernstein-Sato polynomial of $Z$. We begin by recalling the definition and some basic facts about
Bernstein-Sato polynomials. 

Suppose that $X$ is a smooth complex variety and $f\in\shO_X(X)$ is a nonzero regular function on $X$. 
The \emph{Bernstein-Sato polynomial} $b_f(s)\in\CC[s]$ of $f$ is the (nonzero) monic polynomial of minimal degree such that 
\begin{equation}\label{eq_b_function}
b_f(s)f^s\in \Dmod_X[s]\cdot f^{s+1}.
\end{equation}
If $f$ is not invertible, by setting $s=-1$ in (\ref{eq_b_function}),
we see that $(s+1)$ divides $b_f(s)$. We can thus write $b_f(s)=(s+1)\cdot \widetilde{b}_f(s)$, and $\widetilde{b}_f(s)$
is called the \emph{reduced Bernstein-Sato polynomial} of $f$. This invariant was studied by Saito in \cite{Saito_microlocal}. In particular,
he showed that it is related to the microlocal $V$-filtration mentioned in Remark~\ref{microlocal_V_filtration}; consequently, $\widetilde{b}_f(s)$ was also called the \emph{microlocal $b$-function} in \emph{loc.cit.} 

The existence of a nonzero polynomial $b_f(s)$ that satisfies (\ref{eq_b_function}) was proved by Bernstein \cite{Bernstein} when $X={\mathbf A}^n$. 
For a proof in the case of arbitrary $X$ (or, more generally, when $f$ is a holomorphic function on a complex manifold), see 
\cite{Kashiwara2} and \cite{Bjork}.
It follows from the definition that if $X=\bigcup_{i\in I}U_i$ is a finite open cover, then $b_f(s)$ is the least common multiple of the polynomials 
$(b_{f\vert_{U_i}})_{i\in I}$. Moreover, one can show that if $g$ is an invertible function, then $b_f(s)=b_{fg}(s)$. If $E$ is an effective divisor on $X$, we can thus define the Bernstein-Sato polynomial $b_E(s)$ such that if $X=\bigcup_{i\in I}U_i$ is a finite open cover and $f_i\in\shO_X(U_i)$ is an
equation of $E\vert_{U_i}$, then $b_E(s)$ is 
the least common multiple of the polynomials $\big(b_{f_i}(s)\big)_{i\in I}$. If $E\neq 0$, then $b_E(s)=(s+1)\cdot \widetilde{b}_E(s)$, for a polynomial $\widetilde{b}_E(s)$. 

It is sometimes convenient to consider a local version. It is easy to see that for every $x\in X$ and every effective divisor $E$ on $X$,
there is an open neighborhood $U$ of $x$ such that $b_{E\vert_U}(s)$ divides $b_{E\vert_V}(s)$ for every other such neighborhood $V$. We set 
$$b_{E,x}(s):=b_{E\vert_U}(s).$$ 
Note that if $x\in E$, then $(s+1)$ divides $b_{E,x}(s)$; the quotient is denoted $\widetilde{b}_{E,x}(s)$.

By a result of Kashiwara \cite{Kashiwara2}, for every effective divisor $E$ on $X$, 
all roots of $b_E(s)$ are negative rational numbers. The negative of the largest
root of $b_E(s)$ is an important invariant of singularities, the \emph{log canonical threshold} $\alpha_E$, also denoted ${\rm lct}(X,E)$ (see
\cite[Theorem~10.6]{Kollar}). Assuming $E\neq 0$, we can also consider a refined version of the log canonical threshold, 
denoted $\widetilde{\alpha}_E$, which is the negative of the largest root of $\widetilde{b}_E(s)$; we call this the \emph{minimal exponent} 
of $E$, following \cite{Saito-B} (it is also called the \emph{microlocal log canonical threshold} in \cite{Saito-MLCT}). 
We make the convention that if $\widetilde{b}_E (s)$ is a constant, then $\widetilde{\alpha}_E=\infty$. Note that we have 
$$\alpha_E=\min\{1,\widetilde{\alpha}_E\}.$$
If $E$ is defined by $f\in\shO_X(X)$, then we also write $\widetilde{\alpha}_f$ for $\widetilde{\alpha}_E$. 
We can similarly define local versions of these invariants: given $x\in E$, the log canonical threshold $\alpha_{E,x}$
is the negative of the largest root of $b_{E,x}(s)$ and $\widetilde{\alpha}_{E,x}$
is the negative of the largest root of $\widetilde{b}_{E,x}(s)$. 
When $E$ has an isolated singularity at $x$, the invariant $\widetilde{\alpha}_{E,x}$ is also known as the 
\emph{complex singularity index} of $E$ at $x$.

Our main result implies that the minimal exponent governs the $p$-log canonicity of $(X, \alpha Z)$. Since we have observed in Definition \ref{klc} that this $p$-log canonicity condition is equivalent to  $I_p (\alpha Z) = \shO_X$, the first statement below is equivalent to Corollary \ref{mlc_intro} in the introduction.

\begin{corollary}\label{mlc}
If $Z\neq 0$ is a reduced effective divisor on the smooth variety $X$  and $\alpha\in (0,1]$ is a rational number, then the pair $(X, \alpha Z)$ is $p$-log canonical if and only if 
$$p\leq\widetilde{\alpha}_Z - \alpha.$$
Similarly, the pair $(X,\alpha Z)$ is $p$-log canonical in some neighborhood of $x\in Z$ if and only if $p\leq\widetilde{\alpha}_{Z,x} - \alpha$.
\end{corollary}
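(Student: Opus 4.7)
The plan is to translate the $p$-log canonicity condition $I_p(\alpha Z) = \shO_X$ into triviality of the auxiliary ideal $\widetilde{I}_p(\alpha Z)$ via Corollary \ref{cor_first_consequence}, and then invoke Saito's characterization of the triviality of the microlocal $V$-filtration to bring in the minimal exponent $\widetilde{\alpha}_Z$.

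I would proceed by induction on $p$. For the base case $p = 0$, the hypothesis $\alpha \le 1$ is precisely the $(-1)$-log canonicity needed in Corollary \ref{cor_first_consequence}, so the latter gives $I_0(\alpha Z) = \widetilde{I}_0(\alpha Z)$; combined with Remark \ref{inclusion_in_I0} and the identity $\alpha_Z = \min(1,\widetilde{\alpha}_Z)$, the equivalence for $p=0$ reduces to the classical description of the log canonical threshold. For the inductive step, if $(X,\alpha Z)$ is $p$-log canonical then by definition it is $(p-1)$-log canonical, so Corollary \ref{cor_first_consequence} yields $I_p(\alpha Z) = \widetilde{I}_p(\alpha Z)$, which must therefore be trivial. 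Conversely, $p \le \widetilde{\alpha}_Z - \alpha$ implies $p-1 \le \widetilde{\alpha}_Z - \alpha$, whence the inductive hypothesis supplies $(p-1)$-log canonicity; Corollary \ref{cor_first_consequence} again gives $I_p(\alpha Z) = \widetilde{I}_p(\alpha Z)$, reducing the problem to showing this ideal equals $\shO_X$.

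By Lemma \ref{tilde_triviality} this triviality is equivalent to $\partial_t^p\delta \in V^\alpha \iota_+ \shO_X$, which in the language of Saito's microlocal $V$-filtration (see Remark \ref{microlocal_V_filtration}) reads $\widetilde{V}^{p+\alpha}\shO_X = \shO_X$. The decisive external input is Saito's theorem from \cite{Saito_microlocal} identifying the jumping threshold of the microlocal $V$-filtration on $\shO_X$ with the minimal exponent: $\widetilde{V}^\gamma \shO_X = \shO_X$ if and only if $\gamma \le \widetilde{\alpha}_Z$. This closes the induction. The local version near $x \in Z$ follows from the same argument applied on open neighborhoods of $x$, since all relevant objects (Hodge ideals, the $V$-filtration, and Saito's microlocal invariants) localize well.

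The main obstacle is genuinely external, namely Saito's identification of the jumping threshold of the microlocal $V$-filtration on $\shO_X$ with $\widetilde{\alpha}_Z$. Once this is granted, the bridge back to Hodge ideals is supplied by the paper's Theorem \ref{general_description_variant}$^\prime$ via its consequence Corollary \ref{cor_first_consequence}, and the rest is a clean two-way induction.
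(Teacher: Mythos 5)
Your proof is correct, and it reaches the conclusion by a route that differs in structure from the paper's. The paper argues directly: it quotes Theorem~\ref{general_description_variant}$^\prime$ to get the identity $I_p(\alpha Z)\cdot\shO_Z = \widetilde{V}^{p+\alpha}\shO_X\cdot\shO_Z$, notes (via a Nakayama-type observation: both ideals are trivial off $Z$, and an ideal containing an element congruent to $1$ modulo $(f)$ is trivial near $Z$) that triviality of either side is detected after restriction to $Z$, and then applies Saito's characterization $\widetilde{\alpha}_f = \max\{\gamma\mid \widetilde{V}^\gamma\shO_X=\shO_X\}$. No induction is needed. You instead route through Corollary~\ref{cor_first_consequence}, which upgrades the mod-$(f)$ equality to the genuine equality $I_p(\alpha Z)=\widetilde{I}_p(\alpha Z)$ but only under a $(p-1)$-log-canonicity hypothesis; this forces you to run an induction on $p$ to supply that hypothesis at each stage. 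The two approaches consume essentially the same ingredients---Theorem~\ref{general_description_variant}$^\prime$, Lemma~\ref{tilde_triviality}, and the external input from Saito \cite{Saito_microlocal}---since Corollary~\ref{cor_first_consequence} is itself a consequence of the theorem; the paper's version just uses the mod-$(f)$ statement directly and so avoids both the induction and the need for the stronger corollary. Your induction is sound (the base case $p=0$ is really just the general step with the $(-1)$-log-canonicity convention, so it need not be separated out), and the remark on localizing to a neighborhood of $x$ for the local statement matches what the paper does implicitly.
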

\begin{proof}
For $\alpha=1$, this is due to Saito \cite{Saito-MLCT}. The proof combines the connection between 
Hodge ideals and the microlocal $V$-filtration in \emph{loc. cit.} with a result  deduced from \cite{Saito_microlocal} 
relating $\widetilde{\alpha}_f$ to the latter, where $f$ is a local equation defining $Z$; namely
$$\widetilde{\alpha}_f = {\rm max}~\{\gamma \in \QQ~|~ \widetilde{V}^\gamma \shO_X = \shO_X\},$$
see \cite[(1.3.8)]{Saito-MLCT}. (Note that by Nakayama's Lemma the triviality of $\widetilde{V}^\gamma$ 
at the points of $Z$ is equivalent to the triviality of $\widetilde{V}^\gamma\cdot \shO_Z$.)

Once we have Theorem \ref{general_description_variant}$^\prime$, the exact same argument applies in the setting of the above corollary; see also Remark \ref{microlocal_V_filtration}.
\end{proof}

\medskip

Combining Corollary \ref{mlc} with results derived from the birational study of Hodge ideals in \cite{MP3},  
we obtain the estimate for $\widetilde{\alpha}_Z$ in terms of a log resolution of $(X,Z)$ in Corollary \ref{lichtin_inequality}. We fix such a log resolution, i.e. a proper birational morphism $\mu \colon Y\to X$, 
with $Y$ smooth, such that $\mu^*Z$ has simple normal crossings support. 
We assume in addition that $\mu$ is an isomorphism over $X\smallsetminus Z$ and that the strict transform $\widetilde{Z}$ of $Z$ is smooth.
Let $F_1,\ldots,F_m$ be the irreducible components of the exceptional locus of $\mu$ and write
$$\mu^* Z =\widetilde{Z}+\sum_{i=1}^ma_iF_i\quad\text{and}\quad K_{Y/X}=\sum_{i=1}^mb_iF_i.$$
Recall that we denote
$$\gamma : = \underset{i=1, \ldots, m}{\rm min} \left\{\frac{b_i +1}{a_i}\right\}.$$
The log canonical threshold of $(X, Z)$ is given by ${\alpha}_Z=\min\{\gamma,1\}$, and we also 
have ${\alpha}_Z=\min\{\widetilde{\alpha}_Z,1\}$. We now show the inequality $\widetilde{\alpha}_Z\geq \gamma$; see  the Introduction for a discussion.

\begin{proof}[Proof of Corollary~\ref{lichtin_inequality}]
Given any rational number $\alpha\in (0,1]$, it follows from \cite[Proposition 11.2]{MP3} that 
if $\gamma\geq p+\alpha$, then $I_p(\alpha Z)=\shO_X$. We deduce from 
Corollary~\ref{mlc} that  we also have $\widetilde{\alpha}_Z\geq p+\alpha$. 

By taking $p=\lceil\gamma\rceil -1$ and $\alpha=\gamma+1-\lceil\gamma\rceil$, we 
have $\alpha\in (0,1]$ and $p+\alpha=\gamma$, hence we obtain $\widetilde{\alpha}_Z\geq\gamma$.
\end{proof}

\begin{remark}[Rational singularities]\label{rmk_rational_sing}
Saito showed in \cite[Theorem~0.4]{Saito-B} that an integral effective divisor $D$ on $X$ has rational singularities
if and only if $\widetilde{\alpha}_D>1$. The ``only if" part also follows from Corollary~\ref{lichtin_inequality}, since it is known that $D$ has rational singularities
if and only if $\gamma>1$ (see \cite[Theorems~7.9 and 11.1]{Kollar}). In order to handle the ``if" part via Corollary~\ref{mlc}, one needs to show that if
$I_1(\alpha D)=\shO_X$ for some $\alpha\in (0,1]$, then $D$ has rational singularities. (Note that if $\widetilde{\alpha}_D>1$, then $D$ is automatically reduced:
otherwise the log canonical threshold is $\leq 1/2$, and thus $\widetilde{\alpha}_D=\alpha_D\leq 1/2$.) 
Since a reduced divisor $D$ has rational singularities if and only if ${\rm adj}(D)=\shO_X$, where ${\rm adj}(D)$ is the adjoint ideal of $D$ (see \cite[Proposition~9.3.48]{Lazarsfeld}),
we see that the ``if" part of the above assertion would follow from a positive answer to the following question.
\end{remark}

\begin{question}
If $Z$ is a reduced effective  divisor on the smooth variety $X$ and $\alpha$ is a rational number in $(0,1]$, do we have the inclusion
$$I_1(\alpha Z)\subseteq {\rm adj}(Z)?$$
For $\alpha=1$, a positive answer is provided by \cite[Theorem~C]{MP1}.
\end{question}

We now turn to the general properties of the minimal exponent stated in the introduction. We use  basic facts about Hodge ideals established in \cite{MP3}.

\begin{proof}[Proof of Theorem~\ref{properties_refined_lct}]
For the assertion in (1), we may assume that $Y$ is a divisor in $X$. Indeed, if $r={\rm codim}_X(Y)$, then 
after possibly replacing $X$ by an open neighborhood of $x$, we can find smooth, irreducible subvarieties $Y_0=X,Y_1,\ldots,Y_r=Y$ of $X$
such that $Y_i$ is a divisor in $Y_{i-1}$ for $1\leq i\leq r$. If we know the assertion for $r=1$, we obtain
$$\widetilde{\alpha}_{D\vert_Y,x}\leq \widetilde{\alpha}_{D\vert_{Y_{r-1},x}}\leq\cdots\leq\widetilde{\alpha}_{D,x}.$$
From now on, we assume that $Y$ is a divisor in $X$.

We may also assume that
$D\vert_Y$ is reduced in a neighborhood of $x$. 
Indeed, otherwise we have $\alpha_{D\vert_Y,x}\leq \frac{1}{2}$, hence $\widetilde{\alpha}_{D\vert_Y,x}=\alpha_{D\vert_Y,x}$, and we use the fact that for log canonical thresholds the analogue of (1) is known. For example, this follows using the interpretation of the log canonical threshold in terms of multiplier ideals, combined with the Restriction Theorem for such ideals, see
\cite[Theorem~9.5.1]{Lazarsfeld};
we thus have
$$\widetilde{\alpha}_{D\vert_Y,x}=\alpha_{D\vert_Y,x}\leq\alpha_{D,x}\leq\widetilde{\alpha}_{D,x}.$$
After replacing $X$ by a suitable neighborhood of $x$, we may therefore assume that both $D$ and 
$D\vert_Y$ 
are reduced divisors. In this case the Restriction Theorem for Hodge ideals \cite[Theorem~13.1]{MP3} gives
$$I_p(\alpha D\vert_Y)\subseteq I_p(\alpha D)\cdot\shO_Y$$
for every non-negative integer $p$ and every positive rational number $\alpha$. 
By taking $p=\lceil\widetilde{\alpha}_{D\vert_Y,x}\rceil-1$ and $\alpha=\widetilde{\alpha}_{D\vert_Y,x}-p\in (0,1]$,
it follows from Corollary~\ref{mlc} that $I_p(\alpha D\vert_Y)_x=\shO_{Y,x}$, hence by the inclusion above we also have $I_p(\alpha D)_x=\shO_{X,x}$.
Another application of Corollary~\ref{mlc} then gives $\widetilde{\alpha}_{D\vert_Y,x}\leq \widetilde{\alpha}_{D,x}$.

In order to prove the semicontinuity statement in (2), we need to show that for every $t$ in $T$ there is an open neighborhood $U$ of $t$ such that
\begin{equation}\label{eq_semicont}
\widetilde{\alpha}_{D_{t'},s(t')}\geq\widetilde{\alpha}_{D_t,s(t)}\quad\text{for every}\quad t'\in U.
\end{equation}
If $D_t$ is not reduced, then arguing as above we see that $\widetilde{\alpha}_{D_t,s(t)}=\alpha_{D_t,s(t)}$.
The semicontinuity property of log canonical thresholds (see  \cite[Example~9.5.41]{Lazarsfeld}) implies then that there is an open neighborhood $U$ of $t$ such that
$$\widetilde{\alpha}_{D_{t'},s(t')}\geq \alpha_{D_{t'},s(t')}\geq \alpha_{D_t,s(t)}\quad \text{for every}\quad t'\in U,$$
which gives (\ref{eq_semicont}). Suppose now that $D_t$ is reduced. After possibly replacing $T$ by an open neighborhood $T'$ of $t$, and $X$ by $\pi^{-1}(T')$,
we may assume that $D_{t'}$ is reduced for all $t'\in T$; in particular, $D$ is reduced as well.  In this case, the Semicontinuity Theorem for Hodge ideals 
\cite[Theorem~14.1]{MP3} applies; it says that for every $p\geq 0$ and every positive rational number $\alpha$, if 
$I_p(\alpha D)_{s(t)}=\shO_{X_t,s(t)}$, then there is an open neighborhood $U$ of $t$ such that 
$I_p(\alpha D)_{s(t')}=\shO_{X_{t'},s(t')}$ for every $t'\in U$. Taking $p=\lceil\widetilde{\alpha}_{D_t,s(t)}\rceil-1$ and 
$\alpha=\widetilde{\alpha}_{D_t,s(t)}-p\in (0,1]$,  it follows from Corollary~\ref{mlc} that $I_p(\alpha D)_{s(t)}=\shO_{X_t,s(t)}$.
Another application of the corollary gives (\ref{eq_semicont}) on $U$.

In order to prove (3), we may assume that $D$ is reduced in a neighborhood of $x$. Indeed, otherwise as before we have 
 $\widetilde{\alpha}_{D,x}=\alpha_{D,x}$ and also $r = n-2$. However, for the log canonical threshold the bounds
 $$\frac{1}{m} \le \alpha_{D,x} \le \frac{n}{m}$$
 are well known and easy to prove (see e.g. \cite[Lemma~8.10]{Kollar}). 
 After passing to such a neighborhood, we may thus assume that $D$ is reduced. 

In this case, it follows from \cite[Corollary~11.11]{MP3} that $I_p(\alpha D)_x\neq\shO_{X,x}$ if $(\alpha+p) m>n$. If $\alpha\leq 1$, then we conclude from
Corollary~\ref{mlc} that $p>\widetilde{\alpha}_{D,x}-\alpha$. If $\widetilde{\alpha}_{D,x}>\frac{n}{m}$, then by taking $p=\lceil \widetilde{\alpha}_{D,x}\rceil-1\geq 0$ and $\alpha=
\widetilde{\alpha}_{D,x}-p\in (0,1]$, we obtain a contradiction. This proves the upper bound. 

To prove the lower bound,  we may also assume that  $X$ is affine, and we have an algebraic system of coordinates 
$x_1,\ldots,x_n$ on $X$, centered at $x$. If $H$ is defined by a general linear combination of $x_1,\ldots,x_n$, then $H$ is smooth and irreducible in a suitable neighborhood of $x$.
Furthermore, $H$ is not contained in $D$, we have ${\rm mult}_x(D\vert_H)=m$, and ${\mathbf P}\big(C_x(D\vert_H)\big)$ is a general hyperplane section of 
${\mathbf P}(C_xD)$; in particular, the singular locus of ${\mathbf P}\big(C_x(D\vert_H)\big)$ has dimension $r-1$. Since 
$\widetilde{\alpha}_{D\vert_H,x}\leq\widetilde{\alpha}_{D,x}$ by part (1), we see that it is enough to prove the lower bound for 
$\widetilde{\alpha}_{D\vert_H,x}$. After $r+1$ such steps, we reduce to the case when $r=-1$, that is, ${\mathbf P}(C_xD)$ is smooth. In this case, if we take $p=\lceil \frac{n}{m}\rceil-1$ and $\alpha=\frac{n}{m}-p\in (0,1]$, then it follows from \cite[Example~11.6]{MP3} that
$I_p(\alpha D)_x=\shO_{X,x}$, and we conclude using Corollary~\ref{mlc} that
$$\frac{n}{m}=p+\alpha\leq\widetilde{\alpha}_{D,x}.$$
This completes the proof of (3).
\end{proof}

\begin{remark}
It is straightforward to see that if $x$ is a smooth point of $D$, then $b_{D,x}(s)=s+1$, hence $\widetilde{\alpha}_{D,x}=\infty$. On the other hand,
if $x$ is a singular point of $D$, then it follows from part (3) in Theorem~\ref{properties_refined_lct} that 
$\widetilde{\alpha}_{D,x}\leq \frac{n}{2}$.
This also follows from \cite[Theorem~0.4]{Saito_microlocal}, which asserts moreover that the negative of every root of $\widetilde{b}_{D,x}(s)$ lies in the closed interval $[\widetilde{\alpha}_{D,x},n-\widetilde{\alpha}_{D,x}]$. 
\end{remark}

\begin{remark}
Part (2) in Theorem \ref{properties_refined_lct} can also be deduced from (1) using the invariance of the minimal exponent under non-characteristic restriction, which follows from results in \cite{DMST}; see \cite[Remark~1.3 (iv)]{JKSY}. 
\end{remark}

In the next proposition we collect  further properties of the minimal exponent that can be deduced with the help of  Theorem~\ref{properties_refined_lct}. For the corresponding results for log canonical thresholds, see \cite[\S8]{Kollar}.

\begin{proposition}\label{further_properties}
Let $X$ be a smooth $n$-dimensional variety.
\begin{enumerate}
\item  If $f,g\in\shO_X(X)$ are such that $f$, $g$, and $f+g$ are nonzero, then for every $x\in X$ such that $f(x)= g(x) =0$ we have
$$\widetilde{\alpha}_{f+g,x}\leq \widetilde{\alpha}_{f,x}+\widetilde{\alpha}_{g,x}.$$
\item If $f,g\in\shO_X(X)$ are nonzero and $x\in X$ is such that $f(x)= g(x) =0$ and ${\rm mult}_x(f-g)=d\geq 2$, then
$$|\widetilde{\alpha}_{f,x}-\widetilde{\alpha}_{g,x}|\leq\frac{n}{d}.$$
\item If $f\in\shO_X(X)$ is nonzero and $x\in X$ is such that $f(x)=0$, then for every sequence
$(f_i)_{i\geq 1}$ with $f_i\in\shO_X(X)$, such that $\lim_{i\to\infty}{\rm mult}_x(f_i-f)=\infty$, we have
$$\widetilde{\alpha}_{f,x}=\lim_{i\to\infty}\widetilde{\alpha}_{f_i,x}.$$
\end{enumerate}
\end{proposition}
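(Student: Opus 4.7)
My plan is to derive all three parts from Theorem~\ref{properties_refined_lct} combined with the Thom--Sebastiani-type theorem of Saito for the minimal exponent. Recall this asserts that for regular functions $f$ on a smooth variety $X$ and $g$ on a smooth variety $Y$, if we set $F(a,b) = f(a) + g(b)$ on $X \times Y$, then at any point $(x,y)$ with $f(x) = g(y) = 0$ one has
$$\widetilde{\alpha}_{F,(x,y)} = \widetilde{\alpha}_{f,x} + \widetilde{\alpha}_{g,y}.$$

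For (1), I would apply this on $X \times X$ with $F(a,b) = f(a) + g(b)$, obtaining $\widetilde{\alpha}_{F,(x,x)} = \widetilde{\alpha}_{f,x} + \widetilde{\alpha}_{g,x}$. Since $f+g$ is nonzero by hypothesis, the diagonal $\Delta \subset X \times X$ is a smooth subvariety not contained in the divisor of $F$, and $F|_\Delta = f+g$. Applying part (1) of Theorem~\ref{properties_refined_lct} at $(x,x)$ then gives
$$\widetilde{\alpha}_{f+g,x} = \widetilde{\alpha}_{F|_\Delta,(x,x)} \le \widetilde{\alpha}_{F,(x,x)} = \widetilde{\alpha}_{f,x} + \widetilde{\alpha}_{g,x}.$$

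For (2), I would set $h = g - f$ and apply (1) to $f$ and $h$: since $f$, $h$, and $f+h = g$ are all nonzero and vanish at $x$, we obtain $\widetilde{\alpha}_{g,x} \le \widetilde{\alpha}_{f,x} + \widetilde{\alpha}_{h,x}$. As ${\rm mult}_x(h) = d \ge 2$, the upper bound in Theorem~\ref{properties_refined_lct}(3) gives $\widetilde{\alpha}_{h,x} \le n/d$, hence $\widetilde{\alpha}_{g,x} - \widetilde{\alpha}_{f,x} \le n/d$. Swapping the roles of $f$ and $g$ (via $f = g + (-h)$, still of multiplicity $d$ at $x$) yields the other direction. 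Part (3) then follows at once by applying (2) to the pairs $(f, f_i)$ for $i$ large enough that ${\rm mult}_x(f_i - f) \ge 2$ (which also forces $f_i(x) = 0$): this gives $|\widetilde{\alpha}_{f_i,x} - \widetilde{\alpha}_{f,x}| \le n/{\rm mult}_x(f_i - f)$, which tends to zero.

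The main obstacle is the Thom--Sebastiani formula itself, which must be imported from Saito's work; with that as input, everything else is direct bookkeeping using Theorem~\ref{properties_refined_lct}. Minor additional care is needed when some of the invariants are $\infty$ (i.e., when the corresponding divisor is smooth at $x$), but these cases either trivialize the desired inequalities or force all the relevant invariants to be $\infty$ simultaneously, by the implicit function theorem.
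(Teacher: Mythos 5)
Your proposal is correct and follows essentially the same route as the paper's own proof: part (1) via restriction of $f\oplus g$ to the diagonal in $X\times X$ combined with Saito's Thom--Sebastiani formula for the minimal exponent, part (2) via $h=g-f$ together with the multiplicity bound of Theorem~\ref{properties_refined_lct}(3), and part (3) as a direct consequence of (2). Your closing remark about handling the degenerate cases where $h=g-f=0$ or some exponents are $\infty$ is a sensible precaution that the paper leaves implicit.
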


The key input for the proof of the proposition is the following special case, due to Saito.

\begin{example}\label{TS_theorem}
Let $X$ and $Y$ be smooth varieties and $f\in\shO_X(X)$, $g\in\shO_Y(Y)$ be nonzero regular functions. 
Consider the two projections $\pi_1\colon X\times Y\to X$ and $\pi_2\colon X\times Y\to Y$. 
If $x\in X$ and $y\in Y$ are such that $f(x)=0$ and $g(y)=0$, then
\begin{equation}\label{eq0_TS_theorem}
\widetilde{\alpha}_{f\oplus g,(x,y)}=\widetilde{\alpha}_{f,x}+\widetilde{\alpha}_{g,y},
\end{equation}
where $f\oplus g=f\circ \pi_1+g\circ\pi_2$. 
This is a consequence of the Thom-Sebastiani property for \emph{microlocal multiplier ideals} proved in \cite[Theorem~2.2]{MSS}
and of Saito's description of the minimal exponent via the microlocal $V$-filtration as in the proof of Corollary \ref{mlc} 
(cf. also Corollary~\ref{mlc_intro} and Remark~\ref{microlocal_V_filtration}), namely: 
$$\widetilde{\alpha}_{f,x}=\max\{\gamma>0\mid 1\in \widetilde{V}^{\gamma}\shO_X\,\,\text{in a neighborhood of}\,x\}.$$
\end{example}

\begin{proof}[Proof of Proposition~\ref{further_properties}]
The assertion in (1) follows by applying the inequality in Theorem~\ref{properties_refined_lct} (1) to the diagonal embedding 
$X\hookrightarrow X\times X$ and to $f\oplus g$, and using the formula for $\widetilde{\alpha}_{f\oplus g,(x,y)}$ in Example~\ref{TS_theorem}. We deduce the inequality in (2) using (1) and the fact that (assuming $f\neq g$), we have $\widetilde{\alpha}_{f-g,x}\leq\frac{n}{d}$ by Theorem~\ref{properties_refined_lct} (3). Finally, (3) is an immediate consequence of (2).
\end{proof}

\begin{example}
Recall that if $X$ is smooth and $f\in\shO_X(X)$ is nonzero, a result of Saito says that the hypersurface defined by $f$ is rational in the neighborhood of some $x\in X$ with $f(x)=0$, if and only if $\widetilde{\alpha}_{f,x}>1$ (see Remark~\ref{rmk_rational_sing}). An amusing consequence of 
Proposition~\ref{further_properties} (3) is that if this is the case, then for every sequence
$(f_i)_{i\geq 1}$ with $f_i\in\shO_X(X)$, such that $\lim_{i\to\infty}{\rm mult}_x(f_i-f)=\infty$,
the hypersurface defined by $f_i$ has rational singularities in a neighborhood of $x$, for $i\gg 0$. 
\end{example}

In the spirit of the analogy with the behavior of log canonical thresholds, we ask further questions regarding the behavior of minimal exponents.

\begin{question}\label{q_ACC}
Let $n\geq 1$ be fixed and consider the set $\widetilde{\mathcal T}_n$ consisting of all rational numbers $\widetilde{\alpha}_D$, 
where $D$ is a nonzero effective divisor on a smooth $n$-dimensional variety. Does the set $\widetilde{\mathcal{T}}_n$ satisfy ACC, that is, does it contain no infinite strictly increasing sequences?

Note that the set ${\mathcal T}_n=\widetilde{\mathcal T}_n\cap (0,1]$ consists precisely of the set of log canonical thresholds for divisors on smooth $n$-dimensional varieties. This set is known to satisfy ACC: this was a conjecture of Shokurov, proved in \cite{dFEM}.
\end{question}

\begin{question}
Suppose that $X$ is a smooth variety, $f\in\shO_X(X)$ is nonzero, and $x\in X$ such that $f(x)=0$.
Is it true that for every sequence $(f_i)_{i\geq 1}$ with $f_i\in\shO_X(X)$, such that $\lim_{i\to\infty}{\rm mult}_x(f_i-f)=\infty$,
we have
$$\widetilde{\alpha}_{f_i,x}\geq\widetilde{\alpha}_{f,x}\quad\text{for all}\quad i\gg 0?$$
Note that by Proposition~\ref{further_properties} (3), a positive answer to Question~\ref{q_ACC} implies a positive answer to this question as well. It is worth noting, however, that when dealing with log canonical thresholds, the proof of the ACC property in \cite{dFEM} proceeds by first proving the analogue of this weaker question.
\end{question}

\medskip

We conclude by showing that the negatives of the jumping coefficients introduced in Corollary~\ref{cor_jumping_coeff} give, under a suitable condition, roots of the Bernstein-Sato polynomial.
We accomplish this with the help of a result of general interest regarding Bernstein-Sato polynomials of certain elements in $ \iota_+\shO_X$, Proposition \ref{prop_Bernstein_poly} below, which we hope will be useful
in other contexts as well. We also make use of Sabbah's description of the $V$-filtration in terms of such polynomials.

We start by recalling these concepts, using the notation in \S\ref{scn:Vfil}.
Given an element $u\in  \iota_+\shO_X$, the \emph{Bernstein-Sato polynomial} $b_u(s)$ is the (nonzero)
monic polynomial of smallest degree such that
$$b_u(-\partial_tt)u\in \Dmod_X\langle\partial_tt, t\rangle\cdot tu.$$
Using Proposition~\ref{alternative_description} and the fact that $t^j\delta=f^j\delta$ for all $j\geq 1$, it follows
that $b_{\delta}(s)$ is the same as $b_f(s)$. The following result, due to Sabbah, gives a description
of the $V$-filtration on $\iota_+\shO_X$ in terms of Bernstein-Sato polynomials. 
We note that in the case $\Mmod=\shO_X$, the existence of $b_u(s)$ and the rationality of its roots
follows easily from the existence of the
$V$-filtration on $\iota_+\Mmod$, which in turn was constructed in \cite{Malgrange} starting from the existence of $b_f(s)$.\footnote{For more general $\Dmod_X$-modules $\Mmod$, one first proves the existence of general Bernstein-Sato polynomials and then uses this to construct the $V$-filtration on $\iota_+\Mmod$.}

\begin{proposition}[\cite{Sabbah}]\label{sabbah}
For every $\gamma\in\QQ$, we have
$$V^{\gamma}\iota_+\shO_X = \{ u \in  \iota_+\shO_X ~|~  b_u(s) {\rm ~has ~all~ roots~}  \leq -\gamma \}.$$
\end{proposition}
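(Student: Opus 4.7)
The plan is to prove both inclusions of the proposition by exploiting the defining axioms of the $V$-filtration, with the key preliminary observation that using the commutation relation $st = t(s-1)$, the defining condition $b_u(-\partial_t t) u \in \Dmod_X\langle \partial_t t, t\rangle \cdot tu$ can be rewritten as $b_u(s) u \in tN$, where $N := \Dmod_X\langle t, s\rangle \cdot u$. Since every $V^\delta \iota_+\shO_X$ is stable under $\Dmod_X$, $t$, and $s = -\partial_t t$, we have $N \subseteq V^\delta$ whenever $u \in V^\delta$, and hence $tN \subseteq V^{\delta+1}$.

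For the inclusion ``$\supseteq$'', assume $b_u$ has all roots $\leq -\gamma$ and set $\gamma^{\ast} := \sup\{\delta : u \in V^\delta\}$. This supremum is attained by discreteness and left continuity, and is finite by the Hausdorff property $\bigcap_\delta V^\delta \iota_+\shO_X = 0$. Suppose for contradiction that $\gamma^{\ast} < \gamma$. Then $\bar u$ is nonzero in $\Gr_V^{\gamma^{\ast}}$, while the inclusions $u \in V^{\gamma^{\ast}}$ and $tN \subseteq V^{\gamma^{\ast}+1}$ force $b_u(s) u \in V^{>\gamma^{\ast}}$, hence $b_u(s) \bar u = 0$ in $\Gr_V^{\gamma^{\ast}}$. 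But on $\Gr_V^{\gamma^{\ast}}$ the operator $s + \gamma^{\ast}$ acts nilpotently, so $b_u(s)$ acts as the scalar $b_u(-\gamma^{\ast})$ plus a polynomial in the nilpotent $s + \gamma^{\ast}$ without constant term; since $-\gamma^{\ast} > -\gamma$ is not a root of $b_u$, the scalar is nonzero and $b_u(s)$ is invertible on $\Gr_V^{\gamma^{\ast}}$, forcing $\bar u = 0$ --- a contradiction.

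For the inclusion ``$\subseteq$'', assume $u \in V^\gamma$ and let $\lambda$ be any root of $b_u$. Factor $b_u(s) = (s - \lambda)^m \hat b(s)$ with $\hat b(\lambda) \neq 0$, and set $w := \hat b(s) u \in N \subseteq V^\gamma$. By minimality of $b_u$ one has $w \notin tN$, while $(s - \lambda)^m w = b_u(s) u \in tN$. Let $\gamma^{\ast} := \sup\{\delta : w \in V^\delta\} \geq \gamma$; when $\gamma^{\ast} < \gamma + 1$, we have $(s - \lambda)^m w \in tN \subseteq V^{\gamma+1} \subseteq V^{>\gamma^{\ast}}$, so $(s - \lambda)^m \bar w = 0$ in $\Gr_V^{\gamma^{\ast}}$ with $\bar w \neq 0$, and the same nilpotency argument as in the previous paragraph forces $\lambda = -\gamma^{\ast} \leq -\gamma$. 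In the case $\gamma^{\ast} \geq \gamma + 1$ one iterates the analysis with $\gamma$ replaced by $\gamma + 1$, and so on, yielding $\lambda \leq -\gamma$ at each step.

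The main obstacle is this iteration in the second direction, specifically justifying that it terminates and that at each step the lift of the relevant eigenvector can be chosen so that $(s - \lambda)^m w$ lands in $V^{>\gamma^{\ast}}$ rather than only in $V^{\gamma^{\ast}}$. The cleanest packaging uses the induced filtration $F^k := (V^{\gamma+k}\cap N + tN)/tN$ on $N/tN$: its successive quotients $F^k/F^{k+1}$ embed into subquotients of $V^{\gamma+k}/V^{\gamma+k+1}$, on which $s$ has generalized eigenvalues in the set $\{-\alpha : \alpha \in [\gamma+k, \gamma+k+1)\}$, all $\leq -\gamma$. The termination of this filtration on the finite-dimensional subspace $\CC[s] \bar u \subseteq N/tN$ --- equivalently, a Krull-type statement $\bigcap_k (V^{\gamma+k} + tN) \cap N \subseteq tN$ --- is the nontrivial ingredient, relying on the coherence of $V^\gamma$ over $\Dmod_X\langle t, s\rangle$ together with the holonomicity and discreteness properties of the $V$-filtration.
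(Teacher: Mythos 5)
The paper does not prove this proposition: it is stated with the citation \cite{Sabbah} and used as a black box, so there is no internal proof to compare against. Your argument must therefore be assessed on its own merits.

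The structure you propose is the standard one and is essentially sound. The reduction of the defining condition to $b_u(s)u \in tN$ with $N = \Dmod_X\langle t,s\rangle u$ is correct, as is the observation that $u \in V^\delta$ forces $N \subseteq V^\delta$ and $tN \subseteq V^{\delta+1}$. Your proof of ``$\supseteq$'' is complete: by left-continuity the supremum $\gamma^\ast$ is attained (and you don't actually need the Hausdorff property --- if $u$ lies in every $V^\delta$ then in particular $u \in V^\gamma$), and the nilpotency axiom (iv) shows $b_u(s)$ acts invertibly on $\Gr_V^{\gamma^\ast}$ when $-\gamma^\ast$ is not a root, giving the contradiction. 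For ``$\subseteq$'', the correct observation is that $b_u$ is precisely the minimal polynomial of $s$ acting on the finite-dimensional space $\CC[s]\bar u \subseteq N/tN$, and your filtration $F^k = (V^{\gamma+k}\cap N + tN)/tN$, whose gradeds are subquotients of $V^{\gamma+k}/V^{\gamma+k+1}$ and hence carry generalized $s$-eigenvalues $\leq -(\gamma+k) \leq -\gamma$, is the right device. The informal ``iteration'' you sketch before this does not quite parse (replacing $\gamma$ by $\gamma+1$ would require $u \in V^{\gamma+1}$, which you don't have); the $F^\bullet$-filtration is the version that works, so you were right to discard the iterative phrasing.

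The genuine gap is the one you flag yourself: the separatedness $\bigcap_k F^k = 0$, equivalently that $\bigcap_k\bigl(V^{\gamma+k}\cap N + tN\bigr) \subseteq tN$. This is a real Artin--Rees-type statement for the $V$-filtration, and it does not follow from the axioms as stated without an argument. What one needs is that the induced filtration $V^\bullet \cap N$ on the coherent cyclic $\Dmod_X\langle t,s\rangle$-module $N$ is a good $V$-filtration, so that $V^{\gamma+k+1}\cap N = t\bigl(V^{\gamma+k}\cap N\bigr)$ for $k \gg 0$; this is where the coherence of $V^\gamma$ over $\Dmod_X[t,\partial_t t]$ (axiom (i)) enters, via Noetherianity of the associated Rees ring. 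Naming the ingredient and its source is good, but as written the proof of ``$\subseteq$'' is incomplete until this step is actually carried out.
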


We use this proposition, as well as the relationship between  $\widetilde{b}_f$ and the microlocal $V$-filtration, to deduce the following relation between $\widetilde{b}_f(s)$ and the polynomials $b_{\partial_t^m\delta}(s)$. 

\begin{proposition}\label{prop_Bernstein_poly}
For every nonnegative integer $m$, we have the following divisibility properties of polynomials in $\CC[s]$:
$$b_{\partial_t^m\delta}(s)\vert (s+1)\widetilde{b}_f(s-m)\quad\text{and}\quad \widetilde{b}_f(s-m)\vert b_{\partial_t^m\delta}(s).$$
\end{proposition}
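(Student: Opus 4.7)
The plan is to prove the two divisibilities separately, using as main inputs the Bernstein--Sato functional equation $b_f(s)\delta = P(s)\,t\delta$ (with $P \in \Dmod_X[s]$), Proposition \ref{sabbah} (describing $V^{\gamma}\iota_+\shO_X$ via the $b_u$'s), and Saito's characterization of $\widetilde b_f(s)$ through the microlocal $V$-filtration on $\shO_X$ that was used already in the proof of Corollary \ref{mlc} (see Remark \ref{microlocal_V_filtration}). A key auxiliary identity, obtained by combining $sv_m = -\partial_t(tv_m)$ with $tv_m = fv_m-mv_{m-1}$ to derive first $(s-m)v_m = -fv_{m+1}$ and then shifting $m\mapsto m-1$, is
$$(s+1)v_{m-1} = -tv_m \qquad (m\ge 1).$$
This identity lets lower-index terms $v_{m-1}$ be absorbed into $\Dmod_X\langle s,t\rangle\cdot tv_m$ after multiplication by $(s+1)$.

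For the first divisibility $b_{\partial_t^m\delta}(s)\mid (s+1)\widetilde b_f(s-m)$, I would apply $\partial_t^m$ to $b_f(s)\delta = P(s)\,t\delta$ and use the commutation $\partial_t^m P(s) = P(s-m)\partial_t^m$ together with $\partial_t^m(t\delta) = fv_m = tv_m + mv_{m-1}$ to obtain
$$b_f(s-m)\,v_m = P(s-m)\,tv_m + mP(s-m)\,v_{m-1}.$$
Multiplying by $(s+1)$ and invoking the auxiliary identity yields $(s+1)b_f(s-m)\,v_m \in \Dmod_X\langle s,t\rangle\cdot tv_m$, which already gives $b_{v_m}(s)\mid(s+1)(s-m+1)\widetilde b_f(s-m)$. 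To remove the spurious $(s-m+1)$ factor, I would pass to the microlocalization $\iota_+\shO_X[\partial_t^{-1}]$ and use Saito's definition of $\widetilde b_f(s)$ as the minimal $b\in\CC[s]$ with $b(s)\delta\in \Dmod_X[s,\partial_t^{-1}]\cdot t\delta$. Applying $\partial_t^m$ to the microlocal relation $\widetilde b_f(s)\delta = \widetilde P(s,\partial_t^{-1})t\delta$ and rewriting $\partial_t^{-j}t\delta = f\,\partial_t^{-j}\delta$, the same $(s+1)$-absorption trick applied to the negative-$\partial_t$ tails produces the sharper statement $(s+1)\widetilde b_f(s-m)\,v_m \in \Dmod_X\langle s,t\rangle\cdot tv_m$.

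For the second divisibility $\widetilde b_f(s-m)\mid b_{\partial_t^m\delta}(s)$, I would argue via $V$-filtrations. By Proposition \ref{sabbah} the roots of $b_{v_m}$ record the generalized $s$-eigenvalues on the $V$-graded pieces of the $\Dmod_X\langle s,t\rangle$-submodule generated by $v_m$ modulo $tv_m$. Lemma \ref{tilde_triviality} together with Remark \ref{microlocal_V_filtration} gives the equivalence $v_m\in V^\alpha\iota_+\shO_X \iff 1\in\widetilde V^{m+\alpha}\shO_X$ for $\alpha\in(0,1]$, while Saito's result identifies the roots of $\widetilde b_f$ with the $-\gamma$ for which $\gamma$ is a jumping number of $\widetilde V^\bullet\shO_X$ at $1$. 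Each such root $-\gamma$ therefore forces a matching jump in the $V$-position of $v_m$ at level $\gamma-m$, and the nilpotency of $s+(\gamma-m)$ on $\operatorname{Gr}^{\gamma-m}_V$ produces a root $m-\gamma$ of $b_{v_m}$ with the required multiplicity.

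The main obstacle I anticipate is the elimination of the $(s-m+1)$ factor in the first divisibility: the bare functional equation only yields divisibility by $(s+1)b_f(s-m)$, and sharpening this genuinely requires the microlocal characterization of $\widetilde b_f$ together with careful bookkeeping of how the negative-$\partial_t$ corrections cancel against $tv_m$ via $(s+1)v_{k-1}=-tv_k$.
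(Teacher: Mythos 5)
Your auxiliary identity $(s+1)v_{m-1}=-tv_m$ is correct (it is just $s+1=-t\partial_t$ applied to $v_{m-1}$), and your opening computation is right: applying $\partial_t^m$ to $b_f(s)\delta=P(s)t\delta$ gives
$$b_f(s-m)v_m = P(s-m)\,tv_m + mP(s-m)\,v_{m-1},$$
and multiplying by $(s+1)$ and absorbing $v_{m-1}$ yields $b_{v_m}(s)\mid(s+1)(s-m+1)\widetilde b_f(s-m)$. You correctly identify the spurious factor $(s-m+1)$. However, your plan to eliminate it by applying $\partial_t^m$ to a microlocal relation $\widetilde b_f(s)\delta=\widetilde P(s,\partial_t^{-1})t\delta$ is not worked out and does not obviously close: Saito's statement is that $\widetilde b_f(-\partial_tt)\delta\in V^1\widetilde{\mathcal R}\cdot\delta$ where $V^1\widetilde{\mathcal R}=\bigoplus_{i-j\ge 1}\Dmod_X t^i\partial_t^j$, which is not of the form ``$(\text{operator})\cdot t\delta$''; and after you apply $\partial_t^m$ you are left with an identity in the localization $\widetilde B_f$ involving negative powers of $\partial_t$, from which you cannot simply read off a relation of the required form in $\Dmod_X\langle s,t\rangle\cdot tv_m$. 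The paper sidesteps the spurious factor from the outset by a sharper algebraic manipulation: since $b_f(-\partial_tt)=(1-\partial_tt)\widetilde b_f(-\partial_tt)=-t\,\widetilde b_f(-\partial_tt-1)\partial_t$ and $P(-\partial_tt)t\delta=tP(-\partial_tt-1)\delta$, one may cancel the leading $t$ on both sides (using injectivity of $t$ on $\iota_+\shO_X$) to get the clean identity $\widetilde b_f(-\partial_tt-1)\partial_t\delta=R(-\partial_tt)\delta$; applying $\partial_t^{m-1}$ and then multiplying by $(1-\partial_tt)$ produces $(s+1)\widetilde b_f(s-m)v_m\in\Dmod_X[-\partial_tt]\cdot tv_m$ directly, with no extra factor to remove.

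Your argument for the reverse divisibility $\widetilde b_f(s-m)\mid b_{v_m}(s)$ is also too loose to stand. Saito's result quoted in the proof of Corollary~\ref{mlc} only relates $\widetilde\alpha_f$ (the \emph{largest} root of $\widetilde b_f$) to the jump of $\widetilde V^\gamma\shO_X$ at $1$; it does not say that every root of $\widetilde b_f$ is a jumping number ``at $1$.'' More seriously, tracking roots via Proposition~\ref{sabbah} and jumps of the $V$-filtration cannot, without substantial extra work, produce a divisibility of polynomials: you must control the multiplicities of roots, i.e.\ the nilpotency orders of $s+\gamma$ on the relevant graded pieces, and your sketch asserts ``with the required multiplicity'' without justification. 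The paper again argues algebraically: it uses Saito's characterization of $\widetilde b_f$ as the monic polynomial of minimal degree with $\widetilde b_f(-\partial_tt)\delta\in V^1\widetilde{\mathcal R}\cdot\delta$ in the microlocal ring $\widetilde{\mathcal R}=\Dmod_X\langle t,\partial_t,\partial_t^{-1}\rangle$; starting from $b_{v_m}(-\partial_tt)v_m=P\cdot tv_m$ with $P\in\Dmod_X\langle\partial_tt,t\rangle\subseteq V^0\widetilde{\mathcal R}$ and conjugating by $\partial_t^m$, one finds $b_{v_m}(-\partial_tt+m)\delta\in \partial_t^{-m}V^0\widetilde{\mathcal R}\,t\,\partial_t^m\cdot\delta\subseteq V^1\widetilde{\mathcal R}\cdot\delta$, and minimality of $\widetilde b_f$ gives the divisibility as polynomials, multiplicities included. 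So the paper uses the microlocal characterization for the second divisibility (the one you proposed a filtration argument for) and a direct, factor-free manipulation for the first (the one you proposed microlocalizing for); in both places your plan as stated has a genuine gap.
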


\begin{proof}
We may and will assume that $X$ is affine.
We begin by noting that for every polynomial $Q(s)$, we have
\begin{equation}\label{eq1_prop_Bernstein_poly}
\partial_t\cdot Q(\partial_tt)=Q(\partial_tt+1)\cdot\partial_t\quad\text{and}\quad t\cdot Q(\partial_tt)=Q(\partial_tt-1)\cdot t.
\end{equation}
Indeed, it is enough to check this when $Q(s)=s^q$ is a monomial, and in this case both equalities can be easily verified by induction on $q$.

By the definition of the Bernstein-Sato polynomial $b_f(s)=b_{\delta}(s)$, we can find $P\in\Dmod_X(X)[s]$ such that
$$b_f(-\partial_tt)\delta=P(-\partial_tt)t\delta.$$
Using (\ref{eq1_prop_Bernstein_poly}), we obtain
$$P(-\partial_tt)t\delta=tP(-\partial_tt-1)\delta\quad\text{and}$$
$$b_f(-\partial_tt)=(1-\partial_tt)\widetilde{b}_f(-\partial_tt)=-\widetilde{b}_f(-\partial_tt)t\partial_t=-t\cdot\widetilde{b}_f(-\partial_tt-1)\partial_t.$$
Since the action of $t$ on $\iota_+\shO_X$ is injective, we deduce
\begin{equation}\label{eq2_prop_Bernstein_poly}
\widetilde{b}_f(-\partial_tt-1)\partial_t\delta=R(-\partial_tt)\delta,\quad\text{where}\quad R(s)=-P(s-1).
\end{equation}
Using (\ref{eq1_prop_Bernstein_poly}), we get
$$\widetilde{b}_f(-\partial_tt-m)\partial_t^m\delta=\partial_t^{m-1}\cdot\widetilde{b}_f(-\partial_tt-1)\partial_t\delta=
\partial_t^{m-1}\cdot R(-\partial_tt)\delta$$
$$=R(-\partial_tt-m+1)\partial_t^{m-1}\delta,$$
hence
$$(1-\partial_tt)\widetilde{b}_f(-\partial_tt-m)\partial_t^m\delta=R(-\partial_tt-m+1)\cdot (1-\partial_tt)\partial_t^{m-1}\delta$$
$$=
-R(-\partial_tt-m+1)\cdot t\partial_t^m\delta\in\Dmod_X[-\partial_tt]\cdot t\partial_t^m\delta.$$
By the definition of $b_{\partial_t^m\delta}(s)$, we thus conclude that 
$$b_{\partial_t^m\delta}(s)\vert (s+1)\widetilde{b}_f(s-m).$$

For the proof of the second divisibility relation, we make use of a result of Saito describing
$\widetilde{b}_f$ in terms of the microlocal $V$-filtration. For this, we consider the localization
$\widetilde{\mathcal R}:=\Dmod_X\langle t,\partial_t,\partial_t^{-1}\rangle$ of $\Dmod_X\langle t,\partial_t\rangle$ with respect to $\partial_t$.
Similarly, we consider the localization $\widetilde{B}_f$ of $\iota_+\shO_X$ with respect to $\partial_t$, so that
$$\widetilde{B}_f=\bigoplus_{j\in\ZZ}\shO_X\partial_t^j\delta.$$
(See \cite{Saito_microlocal} for more details about this construction.)
It was shown in \cite[Proposition~0.3]{Saito_microlocal} that $\widetilde{b}_f$ is the monic polynomial of smallest degree such that 
$\widetilde{b}_f(-\partial_tt)\delta\in V^1\widetilde{\mathcal R}\cdot\delta$, where
for every $p\in\ZZ$, we put
$$V^p\widetilde{\mathcal R}=\bigoplus_{i-j\geq p}\Dmod_Xt^i\partial_t^j.$$
Note that $V^p\widetilde{\mathcal R}=\partial_t^{-p}\cdot V^0\widetilde{\mathcal R}=V^0\widetilde{\mathcal R}\cdot \partial_t^{-p}$ for all $p\in \ZZ$. 

If $b(s)=b_{\partial_t^m\delta}(s)$, then by assumption there is $P\in\Dmod_X\langle \partial_tt,t\rangle$ such that
$$b(-\partial_tt)\partial_t^m\delta=P\cdot t\partial_t^m\delta.$$
Using (\ref{eq1_prop_Bernstein_poly}), we thus see that
$$\partial_t^mb(-\partial_tt+m)\delta\in \Dmod_X\langle \partial_tt,t\rangle \cdot t\partial_t^m\delta,$$
hence $b(-\partial_tt+m)\delta\in \partial_t^{-m}V^0\widetilde{\mathcal R}t\partial_t^{m}\cdot\delta\subseteq V^1\widetilde{\mathcal R}\cdot\delta$.
Saito's result mentioned above thus implies that $\widetilde{b}_f(s)$ divides $b(s+m)$.
\end{proof}

\begin{remark}
Note that the result above provides another approach to Corollary \ref{mlc}. Recall that by 
Theorem~\ref{general_description_variant}$^\prime$ the pair $(X,D)$ is $p$-log canonical if and only if
$\widetilde{I}_p(D)=\shO_X$. We may assume that $Z$ is defined by $f\in\shO_X(X)$. Now by Lemma \ref{tilde_triviality}, we have
$$\widetilde{I}_p(D)=\shO_X \iff  \partial_t^p\delta\in V^{\alpha}\iota_+\shO_X.$$
On the other hand, by Proposition \ref{sabbah} we see that
$\partial_t^p\delta\in V^{\alpha}\iota_+\shO_X$ if and only if all roots of $b_{\partial_t^p\delta}(s)$ are $\leq -\alpha$. Since
$\alpha\leq 1$, it follows from Proposition~\ref{prop_Bernstein_poly} that this condition holds if and only if all roots of $\widetilde{b}_f(s)$ are
$\leq -\alpha-p$, which is equivalent to $p\leq \widetilde{\alpha}_f-\alpha$. 
\end{remark}

We now come to our goal of relating jumping coefficients for Hodge ideals to roots of the Bernstein-Sato polynomial. This extends the assertion in \cite[Theorem~B]{ELSV}, which is the case $p=0$. 

\begin{proposition}\label{roots_b}
Let $Z\neq 0$ be a reduced, effective divisor on the smooth variety $X$ and suppose that
$\alpha\in (0,1)$ is a rational number and $p\geq 0$ is an integer such that
the pair $(X,\beta Z)$ is $(p-1)$-log canonical for some $\beta\in (\alpha, 1)$. If
$I_p(\alpha Z)\neq I_{p}\big((\alpha +\epsilon)Z\big)$ for $0<\epsilon\ll1$, then we have $\widetilde{b}_Z(-p-\alpha)=0$. 
\end{proposition}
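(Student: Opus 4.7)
The plan is to translate the Hodge-ideal jumping into a jumping of the ordinary $V$-filtration on $\iota_+\shO_X$ of the form $v\partial_t^p\delta\in V^\alpha\smallsetminus V^{>\alpha}$, extract from Sabbah's Proposition~\ref{sabbah} that $-\alpha$ is a root of $b_{v\partial_t^p\delta}$, then transfer this via Proposition~\ref{prop_Bernstein_poly} to a root of $\widetilde{b}_f$.

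First I would apply Corollary~\ref{mlc} to the assumption that $(X,\beta Z)$ is $(p-1)$-log canonical for some $\beta\in(\alpha,1)$: this forces $\widetilde{\alpha}_Z\ge p-1+\beta$, and the same inequality with $\beta$ replaced by $\alpha$ or by $\alpha+\epsilon$ (for $0<\epsilon<\beta-\alpha$) shows that both $(X,\alpha Z)$ and $(X,(\alpha+\epsilon)Z)$ are $(p-1)$-log canonical. Consequently Corollary~\ref{cor_first_consequence} gives $I_p(\alpha Z)=\widetilde{I}_p(\alpha Z)$ and $I_p((\alpha+\epsilon)Z)=\widetilde{I}_p((\alpha+\epsilon)Z)$, while Lemma~\ref{tilde_triviality} yields $\partial_t^i\delta\in V^{\alpha+\epsilon}\iota_+\shO_X\subseteq V^\alpha\iota_+\shO_X$ for every $0\le i\le p-1$.

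Next, choose $v$ in the nonempty set $\widetilde{I}_p(\alpha Z)\smallsetminus\widetilde{I}_p((\alpha+\epsilon)Z)$, and unwind the definition of $\widetilde{I}_p$: there exist $v_0,\ldots,v_{p-1}\in\shO_X$ with $\sum_{i<p}v_i\partial_t^i\delta+v\partial_t^p\delta\in V^\alpha\iota_+\shO_X$, and since the lower-order $\partial_t^i\delta$ already lie in $V^\alpha$, this forces $v\partial_t^p\delta\in V^\alpha$. Conversely, if $v\partial_t^p\delta$ were in $V^{\alpha+\epsilon}$, then taking all auxiliary $v_i'=0$ (and using $\partial_t^i\delta\in V^{\alpha+\epsilon}$ for $i<p$) would show $v\in\widetilde{I}_p((\alpha+\epsilon)Z)$. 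Hence $v\partial_t^p\delta\in V^\alpha\smallsetminus V^{>\alpha}$, and Proposition~\ref{sabbah} identifies $-\alpha$ as the largest root of the Bernstein--Sato polynomial $b_{v\partial_t^p\delta}(s)$.

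To conclude, I would invoke the standard shift behavior of Bernstein--Sato polynomials under multiplication by a regular function on $X$: every root of $b_{v\partial_t^p\delta}(s)$ has the form $\rho-k$ for some root $\rho$ of $b_{\partial_t^p\delta}(s)$ and some $k\in\ZZ_{\ge 0}$. Since roots of Bernstein--Sato polynomials are negative (Kashiwara) and $\alpha\in(0,1)$, the equation $-\alpha=\rho-k$ forces $k=0$ and $\rho=-\alpha$, so $-\alpha$ is a root of $b_{\partial_t^p\delta}(s)$. Proposition~\ref{prop_Bernstein_poly} gives $b_{\partial_t^p\delta}(s)\mid(s+1)\widetilde{b}_f(s-p)$, and the assumption $\alpha<1$ rules out $-\alpha$ coming from the factor $s+1$, so $\widetilde{b}_f(-p-\alpha)=\widetilde{b}_Z(-p-\alpha)=0$. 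The main obstacle is this final shift assertion: it is classical in Bernstein--Sato theory but is not isolated in the excerpt, and one would either cite it or verify it directly within the present framework by iterating the Bernstein--Sato relation for $\partial_t^p\delta$ while commuting $v$ past the $\Dmod_X$-operators involved.
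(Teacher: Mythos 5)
Your reduction to the statement that $v\partial_t^p\delta\in V^\alpha\iota_+\shO_X\smallsetminus V^{>\alpha}\iota_+\shO_X$ for some $v\in\shO_X(X)$ matches the paper's first step, using Corollary~\ref{cor_first_consequence} and Lemma~\ref{tilde_triviality}. From that point on, however, your route has a genuine gap that the paper's argument avoids. You want to pass from $b_{v\partial_t^p\delta}$ to $b_{\partial_t^p\delta}$ via a ``shift lemma'' (roots of $b_{hu}$ are of the form $\rho-k$, $k\geq 0$, with $\rho$ a root of $b_u$), which you correctly flag as the main obstacle: this assertion is not established in the paper's framework, and your suggested verification (``commuting $v$ past the $\Dmod_X$-operators involved'') runs into the precise difficulty that the operators $P\in\Dmod_X\langle\partial_tt,t\rangle$ appearing in the Bernstein--Sato relation for $\partial_t^p\delta$ do not commute with $v$. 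The paper never needs such a lemma. It simply applies $b_{\partial_t^p\delta}(-\partial_tt)$ to $v\partial_t^p\delta$ and observes that, because $v$ commutes with $\partial_tt$,
\[
b_{\partial_t^p\delta}(-\partial_tt)\bigl(v\partial_t^p\delta\bigr)=v\cdot b_{\partial_t^p\delta}(-\partial_tt)\partial_t^p\delta\in v\cdot\Dmod_X\langle\partial_tt,t\rangle\cdot t\partial_t^p\delta\subseteq V^{\beta}\subseteq V^{>\alpha},
\]
the point being that $t\partial_t^p\delta\in V^{\beta}$ (since $\partial_t^{p-1}\delta\in V^{\beta}$) and that $V^{\beta}$ is stable under both $\shO_X$ and $\Dmod_X\langle\partial_tt,t\rangle$; there is no need to commute $v$ past $P$. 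Combined with $(\partial_tt-\alpha)^N v\partial_t^p\delta\in V^{>\alpha}$ for $N\gg 0$, a coprimality argument forces $b_{\partial_t^p\delta}(-\alpha)=0$, and Proposition~\ref{prop_Bernstein_poly} then gives $\widetilde{b}_Z(-p-\alpha)=0$ since $\alpha\neq 1$.

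A secondary issue: even granting your shift lemma, the step ``roots of Bernstein--Sato polynomials are negative (Kashiwara), hence $k=0$'' is not valid as stated. Kashiwara's negativity theorem is for $b_f(s)$ (and $b_E$); the generalized $b$-functions $b_u(s)$ of elements $u\in\iota_+\shO_X$ can have nonnegative roots --- indeed by Proposition~\ref{prop_Bernstein_poly}, the roots of $b_{\partial_t^p\delta}(s)$ other than $-1$ are of the form $\rho'+p$ with $\widetilde{b}_f(\rho')=0$, and these can be positive. One would have to invoke the $(p-1)$-log canonicity hypothesis and Corollary~\ref{mlc} to bound them by $1-\beta<1-\alpha$ before the forced $k=0$ follows. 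The paper's direct argument sidesteps this entirely as well.
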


\begin{proof}
We may assume that $X$ is affine and $Z$ is defined by $f\in\shO_X(X)$. In order to simplify the notation, we
 write $V^{\alpha}$ for $V^{\alpha}\iota_+\shO_X$.
Note first that since we assume that the pair $(X,\beta Z)$ is $(p-1)$-log canonical, 
we have $\shO_X(-Z)\subseteq I_p(\gamma Z)$ for every $\gamma\in (0,\beta]$ (see assertion ii) in Remark~\ref{alternative}), hence
our hypothesis on 
$\alpha$ is equivalent to  the condition that 
$$I_p(\alpha Z)\cdot\shO_Z\neq I_{p}\big((\alpha +\epsilon)Z\big)\cdot\shO_Z$$
for  $0<\epsilon\ll 1$. This is further equivalent to the existence of an $h\in\shO_X(X)$ such that
$h\partial_t^p\delta\in V^{\alpha}\smallsetminus V^{>\alpha}$; this follows using Corollary~\ref{cor_first_consequence}
and the fact that $\partial_t^j\delta\in V^{\beta}$ for $j\leq p-1$ by Lemma~\ref{tilde_triviality}. 

By the definition of general Bernstein-Sato polynomials, we have 
$$b_{\partial_t^p\delta}(-\partial_tt)\partial_t^p\delta\in \Dmod_X\langle \partial_tt, t\rangle \cdot t\partial_t^p\delta\subseteq V^{\beta},$$
where the inclusion follows from the fact that $\partial_t^{p-1}\delta\in V^{\beta}$. In particular, we have
$$b_{\partial_t^p\delta}(-\partial_tt)h\partial_t^p\delta\in V^{\beta}\subseteq V^{>\alpha}.$$
On the other hand, by the definition of the $V$-filtration, for $N\gg 0$ we have
$$(\partial_tt-\alpha)^Nh\partial_t^p\delta\in V^{>\alpha}.$$
If the two polynomials $b_{\partial_t^p\delta}(s)$ and $(s+\alpha)^N$ were coprime, we would infer that 
$h\partial_t^p\delta \in V^{>\alpha}$, which is a contradiction. Thus we deduce that
$b_{\partial_t^p\delta}(-\alpha)=0$. Since $\alpha\neq 1$, we conclude using Proposition~\ref{prop_Bernstein_poly}
that $\widetilde{b}_f(- p -\alpha)=0$. 
\end{proof}

\begin{remark}
M. Saito points out that Proposition \ref{roots_b} can also be obtained by combining the proof of Corollary \ref{cor_first_consequence} with the theory of microlocal Bernstein-Sato polynomials \cite{Saito_microlocal}, without appealing to the statement of Proposition \ref{prop_Bernstein_poly} (which does use this theory in its proof).
\end{remark}

\begin{example}
Let $Z \subset \CC^2$ be the cusp, defined by $f = x^2 + y^3$. It is well known that 
$$\tilde{b}_Z (s) = \left(s + \frac{5}{6}\right)\left(s+ \frac{7}{6}\right),$$
so that $\widetilde{\alpha}_Z = 5/6$ and $I_0 (\beta Z) = \shO_X$ for every $\beta \le 5/6$. On the other hand, explicit formulas for weighted homogeneous polynomials show that $I_1 \left(\frac{1}{6} Z\right) \neq I_1 \left((\frac{1}{6} + \epsilon) Z\right)$ for $0<\epsilon\ll 1$; see \cite[Example~3.5]{Zhang}. Thus the ``other" root $- 7/6 = -1 - 1/6$ is accounted for by the jumping number $1/6$ of $I_1$, as in Proposition \ref{roots_b}.
\end{example}

\subsection{Appendix: some combinatorial formulas}
In this appendix we derive some identities involving the polynomials 
$$Q_i(x)=i!\cdot {{x+i-1}\choose i}:=\prod_{j=0}^{i-1}(x+j)\in\ZZ[x], \,\,\,\,\,\,{\rm for}~ i\ge 0$$
(with the convention $Q_0=1$), used in the main body of the paper.

\begin{lemma}\label{eq2_precise_formula}
For every $j\geq 0$, we have
$$
Q_j(x)=\sum_{i=0}^j {j\choose i}Q_{j-i}(y)Q_i(x-y)\quad\text{in}\quad\ZZ[x,y].
$$
\end{lemma}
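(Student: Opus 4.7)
The plan is to recognize the identity as a statement about the Cauchy product of binomial series. Observe first that, since $Q_i(x)/i!=\binom{x+i-1}{i}$, the polynomials $Q_i(x)$ are precisely the coefficients (up to $i!$) in the expansion
\[
(1-z)^{-x}=\sum_{i\ge 0}\binom{x+i-1}{i}z^i=\sum_{i\ge 0}\frac{Q_i(x)}{i!}z^i
\]
in $\QQ[x]\llbracket z\rrbracket$. This formal power series identity is well-defined (via $(1-z)^{-x}:=\exp(-x\log(1-z))$) and valid as an identity in $\QQ[x]\llbracket z\rrbracket$.

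Next, factor
\[
(1-z)^{-x}=(1-z)^{-y}\cdot(1-z)^{-(x-y)}
\]
in $\QQ[x,y]\llbracket z\rrbracket$, and compute the right-hand side via the Cauchy product of the two power series obtained from the formula above applied to $y$ and to $x-y$:
\[
(1-z)^{-y}(1-z)^{-(x-y)}=\sum_{j\ge 0}\left(\sum_{i=0}^{j}\frac{Q_{j-i}(y)}{(j-i)!}\cdot\frac{Q_{i}(x-y)}{i!}\right)z^j.
\]
Equating the coefficients of $z^j$ on both sides and multiplying by $j!$ yields exactly
\[
Q_j(x)=\sum_{i=0}^j\binom{j}{i}Q_{j-i}(y)Q_i(x-y),
\]
which is the claimed formula.

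If one prefers to avoid generating functions, an equally short proof is by induction on $j$: the base case $j=0$ is trivial, and for the step one writes $(x+j)=(y+j-i)+(x-y+i)$, applies the inductive hypothesis to $Q_{j+1}(x)=(x+j)Q_j(x)$, and collects terms using $(y+j-i)Q_{j-i}(y)=Q_{j-i+1}(y)$, $(x-y+i)Q_i(x-y)=Q_{i+1}(x-y)$, together with Pascal's identity $\binom{j}{i}+\binom{j}{i-1}=\binom{j+1}{i}$. There is no real obstacle to either approach; the only bookkeeping issue is the index shift in the inductive proof, which the generating-function argument sidesteps entirely.
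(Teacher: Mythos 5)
Your generating-function proof is correct and is essentially the same argument as the paper's: the paper also derives the identity from the factorization $(1-z)^{-k}=(1-z)^{-\ell}(1-z)^{-(k-\ell)}$ and the binomial series, except that it first reduces the polynomial identity to its evaluations at positive integers $k,\ell$ rather than working directly with $(1-z)^{-x}$ in $\QQ[x,y]\llbracket z\rrbracket$ as you do. Your version hides that reduction inside the justification of the formal expansion $(1-z)^{-x}=\sum_i\binom{x+i-1}{i}z^i$ (which itself is most easily verified by checking at integer specializations), so the two proofs differ only in presentation; the inductive alternative you sketch is a genuinely different route, but you do not carry it out.
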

\begin{proof}
It is enough to show that 
$$
Q_j(k)=\sum_{i=0}^j {j\choose i}Q_{j-i}(\ell)Q_i(k-\ell)\quad\text{for every}\quad k,\ell\in\ZZ, 1\leq \ell< k.
$$
Note that  for such $k$ and $\ell$, the above formula is equivalent with the binomial identity
\begin{equation}\label{eq3_precise_formula}
{{k+j-1}\choose j}=\sum_{i=0}^j{{\ell+j-i-1}\choose {j-i}}\cdot {{k-\ell+i-1}\choose i}.
\end{equation}

For every positive integer $m$, we have
$$\frac{1}{(1-z)^m}=\sum_{j\geq 0}{{m+j-1}\choose j}z^j.$$
Using the identity
$$\frac{1}{(1-z)^k}=\frac{1}{(1-z)^{\ell}}\cdot \frac{1}{(1-z)^{k-\ell}},$$
we obtain (\ref{eq3_precise_formula}).
\end{proof}

\begin{lemma}\label{formula_alternating_sum}
For every  $j\ge 0$, we have
$$Q_j(x+1) = \sum_{i=0}^ji!{j\choose i}Q_{j-i}(x).$$
\end{lemma}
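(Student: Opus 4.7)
The plan is to obtain this identity as a direct specialization of the previous lemma (Lemma \ref{eq2_precise_formula}). That lemma states
\[
Q_j(x) = \sum_{i=0}^j \binom{j}{i} Q_{j-i}(y) Q_i(x-y) \quad \text{in } \ZZ[x,y],
\]
so after substituting $x \mapsto x+1$ and $y \mapsto x$ (which is legitimate since the identity holds in the polynomial ring), one obtains
\[
Q_j(x+1) = \sum_{i=0}^j \binom{j}{i} Q_{j-i}(x) Q_i(1).
\]
The only thing left is to observe that $Q_i(1) = 1 \cdot 2 \cdots i = i!$, directly from the definition $Q_i(x) = \prod_{k=0}^{i-1}(x+k)$.

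Plugging this in gives exactly $Q_j(x+1) = \sum_{i=0}^j i! \binom{j}{i} Q_{j-i}(x)$, as desired. There is no real obstacle here; the identity is built into the more general binomial-type convolution already established.
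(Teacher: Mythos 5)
Your proof is correct, and it is genuinely different from the one in the paper. The paper's argument evaluates both sides at a positive integer $m$, reduces the claim to the binomial identity
$$\binom{m+j}{j}=\sum_{i=0}^j\binom{m+j-i-1}{j-i},$$
and then verifies that identity directly with a short generating-function computation. Your argument instead treats Lemma~\ref{formula_alternating_sum} as a one-line corollary of Lemma~\ref{eq2_precise_formula}: substituting $x\mapsto x+1$ and $y\mapsto x$ in the Vandermonde-type convolution $Q_j(x)=\sum_i\binom{j}{i}Q_{j-i}(y)Q_i(x-y)$, and then noting $Q_i(1)=i!$ (including the edge case $Q_0(1)=1=0!$), yields the claimed identity immediately. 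Your route is shorter and makes the logical dependence between the two appendix lemmas explicit, at the cost of being less self-contained; the paper's route re-derives a fresh binomial identity, keeping the lemma independent of Lemma~\ref{eq2_precise_formula}. Both are valid; yours is arguably the tidier way to organize the appendix.
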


\begin{proof}
It is of course enough to show that the equality holds whenever we evaluate each side at a positive integer $m$.
The corresponding equality is equivalent with the following binomial identity
\begin{equation}\label{eq_formula_alternating_sum}
{{m+j}\choose j}=\sum_{i=0}^j{{m+j-i-1}\choose {j-i}}.
\end{equation}
The right-hand side of (\ref{eq_formula_alternating_sum}) is the coefficient of 
$t^{m-1}$ in
$$\sum_{p=0}^j(t+1)^{m+p-1}=(t+1)^{m-1}\cdot \frac{(t+1)^{j+1}-1}{t},$$
hence it is equal to the left-hand side of (\ref{eq_formula_alternating_sum}).
\end{proof}

\section*{References}
\begin{biblist}

\bib{Bernstein}{article}{
   author={Bern\v{s}te\u{\i}n, I. N.},
   title={Analytic continuation of generalized functions with respect to a
   parameter},
   journal={Funkcional. Anal. i Prilo\v{z}en.},
   volume={6},
   date={1972},
   number={4},
   pages={26--40},
}

\bib{Budur-Saito}{article}{
 author = {Budur, N.},
 author={Saito, M.},
 title = {Multiplier ideals, {$V$}-filtration, and spectrum},
  journal = {J. Algebraic Geom.},
  volume = {14},
      date= {2005},
    number= {2},
     pages = {269--282},
}

\bib{Bjork}{book}{
   author={Bj\"{o}rk, J.-E.},
   title={Rings of differential operators},
   series={North-Holland Mathematical Library},
   volume={21},
   publisher={North-Holland Publishing Co., Amsterdam-New York},
   date={1979},
}

\bib{dFEM}{article}{
   author={de Fernex, T.},
   author={Ein, L.},
   author={Musta\c{t}\u{a}, M.},
   title={Shokurov's ACC conjecture for log canonical thresholds on smooth
   varieties},
   journal={Duke Math. J.},
   volume={152},
   date={2010},
   number={1},
   pages={93--114},
   }

\bib{DMST}{article}{
 author={Dimca, Alexandru},
   author={Maisonobe, Philippe},
   author={Saito, Morihiko},
   author={Torrelli, Tristan},
   title={Multiplier ideals, $V$-filtrations and transversal sections},
   journal={Math. Ann.},
   volume={336},
   date={2006},
   number={4},
   pages={901--924},
   }

\bib{ELSV}{article}{
author={Ein, L.},
   author={Lazarsfeld, R.},
   author={Smith, K.~E.},
   author={Varolin, D.},
   title={Jumping coefficients of multiplier ideals},
   journal={Duke Math. J.},
   volume={123},
   date={2004},
   number={3},
   pages={469--506},
   }

\bib{HTT}{book}{
   author={Hotta, R.},
   author={Takeuchi, K.},
   author={Tanisaki, T.},
   title={D-modules, perverse sheaves, and representation theory},
   publisher={Birkh\"auser, Boston},
   date={2008},
}

\bib{JKSY}{article}{
   author={Jung, Seung-Jo},
   author={Kim, In-Kyun},
   author={Saito, Morihiko},
   author={Yoon, Youngho},
   title={Hodge ideals and spectrum of isolated hypersurface singularities},
   journal={preprint},
   date={2019},
}

\bib{Kashiwara2}{article}{
   author={Kashiwara, M.},
   title={$B$-functions and holonomic systems. Rationality of roots of
   $B$-functions},
   journal={Invent. Math.},
   volume={38},
   date={1976/77},
   number={1},
   pages={33--53},
}

\bib{Kashiwara}{article}{
   author={Kas{}hiwara, Masaki},
   title={On the holonomic systems of linear differential equations. II},
   journal={Invent. Math.},
   volume={49},
   date={1978},
   number={2},
   pages={121--135},
}

\bib{Kashiwara3}{article}{
author={Kashiwara, M.},
title={Vanishing cycle sheaves and holonomic systems of differential
equations},
conference={
 title={Algebraic geometry},
 address={Tokyo/Kyoto},
date={1982},
},
book={
 series={Lecture Notes in Math.},
 volume={1016},
  publisher={Springer, Berlin},
 },
date={1983},
pages={134--142},
}

\bib{Kollar}{article}{
   author={Koll\'ar, J.},
   title={Singularities of pairs},
   conference={
      title={Algebraic geometry---Santa Cruz 1995},
   },
   book={
      series={Proc. Sympos. Pure Math.},
      volume={62},
      publisher={Amer. Math. Soc., Providence, RI},
   },
   date={1997},
   pages={221--287},
}

\bib{Lazarsfeld}{book}{
       author={Lazarsfeld, R.},
       title={Positivity in algebraic geometry II},  
       series={Ergebnisse der Mathematik und ihrer Grenzgebiete},  
       volume={49},
       publisher={Springer-Verlag, Berlin},
       date={2004},
}      

\bib{Lichtin}{article}{
   author={Lichtin, B.},
   title={Poles of $|f(z, w)|^{2s}$ and roots of the $b$-function},
   journal={Ark. Mat.},
   volume={27},
   date={1989},
   number={2},
   pages={283--304},
}

\bib{Loeser}{article}{
   author={Loeser, F.},
   title={Exposant d'Arnold et sections planes},
   journal={C. R. Acad. Sci. Paris S\'{e}r. I Math.},
   volume={298},
   date={1984},
   number={19},
   pages={485--488},
}

\bib{Malgrange}{article}{
  author= {Malgrange, B.},
     title= {Polyn\^omes de {B}ernstein-{S}ato et cohomologie \'evanescente},
 booktitle= {Analysis and topology on singular spaces, {II}, {III}
              ({L}uminy, 1981)},
    series = {Ast\'erisque},
    volume = {101},
    pages = {243--267},
 publisher = {Soc. Math. France, Paris},
      date = {1983},
      }

\bib{MSS}{article}{
   author={Maxim, L.},
   author={Saito, M.},
   author={Sch\"{u}rmann, J.},
   title={Thom-Sebastiani theorems for filtered $\Dmod$-modules and for
   multiplier ideals},
   journal={Int. Math. Res. Not. IMRN},
   date={2020},
   number={1},
   pages={91--111},
}

\bib{MP1}{article}{
      author={Musta\c t\u a, M.},
      author={Popa, M.},
	title={Hodge ideals},
	journal={preprint arXiv:1605.08088, to appear in Memoirs of the AMS}, 
	date={2016}, 
}
\bib{MP2}{article}{
      author={Musta\c t\u a, M.},
      author={Po{}pa, M.},
      title={Restriction, subadditivity, and semicontinuity theorems for Hodge ideals},
      journal={Int. Math. Res. Not.}, 
      date={2018}, 
      number={11},
      pages={3587--3605},
}
\bib{MP3}{article}{
      author={Musta\c t\u a, M.},
      author={Popa, M.},
      title={Hodge ideals for $\QQ$-divisors: birational approach},
      journal={J. de l'\'Ecole Polytechnique}, 
      date={2019}, 
      number={6},
      pages={283--328},
}
\bib{Sabbah}{article}{
      author={Sabbah, C.},
	title={$\Dmod$-modules et cycles \'{e}vanescents (d'apr\`{e}s
B.~Malgrange et M. Kashiwara)},
conference={
      title={G\'eom\'etrie alg\'ebrique
et applications III, La R\'{a}bida (1984)}},
book={
      series={Traveaux en Cours},
      volume={24},
      publisher={Hermann, Paris},
   },
        date={1984}, 
	pages={53--98},
}

\bib{Saito-GM}{article}{
   author={Saito, M.},
   title={Hodge filtrations on Gauss-Manin systems. I},
   journal={J. Fac. Sci. Univ. Tokyo Sect. IA Math.},
   volume={30},
   date={1984},
   number={3},
   pages={489--498},
   issn={0040-8980},
   review={\MR{731513}},
}

\bib{Saito-MHP}{article}{
   author={Saito, Morihiko},
   title={Modules de Hodge polarisables},
   journal={Publ. Res. Inst. Math. Sci.},
   volume={24},
   date={1988},
   number={6},
   pages={849--995},
}
\bib{Saito-MHM}{article}{
   author={Saito, M.},
   title={Mixed Hodge modules},
   journal={Publ. Res. Inst. Math. Sci.},
   volume={26},
   date={1990},
   number={2},
   pages={221--333},
}
\bib{Saito-B}{article}{
   author={Saito, Morihiko},
   title={On $b$-function, spectrum and rational singularity},
   journal={Math. Ann.},
   volume={295},
   date={1993},
   number={1},
   pages={51--74},
}

\bib{Saito_microlocal}{article}{
   author={Saito, M.},
   title={On microlocal $b$-function},
   journal={Bull. Soc. Math. France},
   volume={122},
   date={1994},
   number={2},
   pages={163--184},
}

     \bib{Saito-HF}{article}{
   author={Saito, Morihiko},
   title={On the Hodge filtration of Hodge modules},
   journal={Mosc. Math. J.},
   volume={9},
   date={2009},
   number={1},
   pages={161--191},
}

\bib{Saito-MLCT}{article}{
      author={Saito, M.},
	title={Hodge ideals and microlocal $V$-filtration},
	journal={preprint arXiv:1612.08667}, 
	date={2016}, 
}

\bib{Steenbrink}{article}{
    AUTHOR = {Steenbrink, J. H. M.},
     TITLE = {Semicontinuity of the singularity spectrum},
   JOURNAL = {Invent. Math.},
  FJOURNAL = {Inventiones Mathematicae},
    VOLUME = {79},
      YEAR = {1985},
    NUMBER = {3},
     PAGES = {557--565},
     }

\bib{Zhang}{article}{
      author={Zhang, M.},
      title={Hodge filtration and Hodge ideals for $\QQ$-divisors with  weighted homogeneous isolated singularities},
      journal={preprint arXiv:1810.06656}, 
      date={2018}, 
}

\end{biblist}

\end{document}